\theoremstyle{plain}
\theoremstyle{plain}
\newtheorem{theorem}{Theorem}[section]
\newtheorem{corollary}[theorem]{Corollary}
\newtheorem{definition}[theorem]{Definition}
\newtheorem{example}[theorem]{Example}
\newtheorem{lemma}[theorem]{Lemma}
\newtheorem{notation}[theorem]{Notation}
\newtheorem{problem}[theorem]{Problem}
\newtheorem{proposition}[theorem]{Proposition}
\newtheorem{remark}[theorem]{Remark}
\newcommand{\LeftEqNo}{\let\veqno\@@leqno}
 \numberwithin{equation}  {section}
\begin{document}

\

\vspace{-2cm}

\title[A Beurling-Chen-Hadwin-Shen Theorem for Semifinite von Neumann Algebras]{A Beurling-Chen-Hadwin-Shen Theorem for Noncommutative Hardy Spaces Associated with Semifinite von Neumann Algebras with Unitarily Invariant Norms}

\author{Lauren Sager and Wenjing Liu}
\address{Lauren Sager \\
        Department of Mathematics\\
        Saint Anselm College\\
        Manchester, NH 03102;  Email: lbq32@wildcats.unh.edu\\
        Wenjing Liu\\
        Department of Mathematics and Statistics \\
         University of New Hampshire\\
         Durham, NH 03824;   Email: wbs4@wildcats.unh.edu}

\begin{abstract}
 We introduce a class of unitarily invariant, locally $\|\cdot\|_1$-dominating, mutually continuous
 norms with repect to $\tau$ on a von Neumann algebra $\mathcal{M}$ with a faithful, normal, semifinite tracial
 weight $\tau$.  We prove a Beurling-Chen-Hadwin-Shen theorem for $H^\infty$-invariant spaces of $L^\alpha(\mathcal{M},\tau)$,
 where $\alpha$ is a unitarily invariant, locally $\|\cdot\|_1$-dominating, mutually continuous norm with respect to
 $\tau$, and $H^\infty$ is an extension of Arveson's noncommutative Hardy space. We use our main result to characterize the
  $H^\infty$-invariant subspaces of a noncommutative Banach function space $\mathcal I(\tau)$ with the norm $\|\cdot\|_{E }$ on $\mathcal{M}$, the crossed product of a semifinite von Neumann algebra by an action $\beta$, and $B(\mathcal{H})$ for a separable Hilbert space $\mathcal{H}$.
\end{abstract}

\subjclass[2000]{}
\keywords{}

\maketitle

\section{Introduction}

    Suppose that $(X,\Sigma, \nu)$ is a localizable measure space  with the finite subset property (i.e. a   measure space is localizable if the multiplication algebra is maximal abelian, and has the finite subset property if for every $A\in\Sigma$ such that $\nu(A)>0$, there exists a $B\in\Sigma$ such that $B\subseteq A$, and $0<\nu(B)<\infty$).  We let $E$ be a two-sided ideal of the set of complex-valued, $\Sigma$-measureable functions on $X$, such that all functions equal almost everywhere with respect to $\nu$ are identified.  If $E$ has a norm $\|\cdot\|_{E}$ such that $(E,\|\cdot\|_E)$ is a Banach lattice, then we call $E$ a Banach function space. (See the work of de Pagter in \cite{depag}).

    We let $\mathcal{M}$ be a von Neumann algebra with a semifinite, faithful,
    normal tracial weight $\tau$.  For every   operator $x\in\mathcal{M}$, we
    define $d_x(\lambda)=\tau(e^{|x|}(\lambda,\infty))$ for every $\lambda\geq 0$ (where $e^{|x|}(\lambda,\infty)$ is
    the spectral projection of $|x|$ on the interval $(\lambda,\infty)$), and $\mu(x)= \inf\{\lambda\geq 0 \,:\, d_x(\lambda)\leq t\}$
     for a given $t\geq 0$.  Consider the set  $\mathcal{I}=\{x\in\mathcal{M}\,:\, \text{$x$ is a finite rank operator in $(\mathcal M,\tau)$ and } \|\mu(x)\|_E<\infty\}$ and let $\|\cdot\|_{\mathcal{I}(\tau)}: \mathcal I\rightarrow [0,\infty)$ be such that
     $\|x\|_{\mathcal{I}(\tau)}=\|\mu(x)\|_E$ for all $x\in\mathcal I$. It is known that $\|\cdot\|_{\mathcal{I}(\tau)}$ defines a norm on $\mathcal I$ (see
     \cite{depag}).
      Denote by $\mathcal{I}(\tau)$ the
      closure of $\mathcal{I}$ under $\|\cdot\|_{\mathcal{I}(\tau)}$.  

    We briefly recall an extension of Arveson's non commutative Hardy space for a semifinite von Neumann algebra. Let $H^\infty$ be a weak* closed unital subalgebra of $\mathcal M$.
    Then
    $\mathcal{D}=H^\infty\cap(H^\infty)^*$ is a von Neumann subalgebra of $\mathcal M$.
     Assume that there also exists a faithful, normal, conditional expectation $\Phi:\mathcal{M}\rightarrow \mathcal{D}$.  Then $H^\infty$ is called a {\em
     semifinite
     non-commutative Hardy space} if (i)  the restriction of $\tau$ on $\mathcal D$ is semifinite; (ii) $\Phi(xy)=\Phi(x)\Phi(y)$ for every
      $x,y\in H^\infty$; (iii) $H^\infty + (H^\infty)^*$ is weak* dense in $\mathcal M$; and (iv)$\tau(\Phi(x))=\tau(x)$ for every positive $x\in\mathcal{M}$.

    We want to ask the following question about the space $\mathcal I(\tau)$:
    \begin{problem}\label{question1.1}
    Consider  a semifinite subdiagonal subalgebra $H^\infty$ of $\mathcal M$  and a closed subspace $\mathcal{K}$ of $\mathcal I(\tau)$ such that $H^\infty \mathcal{K}\subseteq \mathcal{K}$.  How can the subspace $\mathcal{K}$ be characterized?
    \end{problem}

    It can be shown that when $\mathcal{M}$ is diffuse, and $\|\cdot\|_{\mathcal{I}(\tau)}$ is order continuous, the norm $\|\cdot\|_{\mathcal{I}(\tau)} $  on  $\mathcal{I}(\tau)$ is in the family of unitarily invariant, locally $\|\cdot\|_1$-dominating, mutually continuous norms with respect to the tracial weight $\tau$. (See Definition \ref{def3.1}).

    Our goal for this paper is to prove a Beurling-type theorem for a von Neumann algebra with semifinite, faithful,
    normal tracial weight $\tau$, and a unitarily invariant, locally $\|\cdot\|_1$-dominating, mutually continuous norm with respect to $\tau$,
    for example, the Banach function space $\mathcal I(\tau)$ with the norm $\|\cdot\|_{E}$.

    In 1937, J. von Neumann introduced the unitarily invariant norms on $M_n(\mathbb{C})$ as a way to metrize the matrix spaces \cite{vNeumann}.
     He showed that the class of unitarily invariant norms on $M_n(\mathbb{C})$ is in correspondence with the class
      of symmetric guage norms on $\mathbb{C}^n$.  Specifically, he proved that for any unitarily invariant norm $\alpha$, there exists a symmetric guage norm
      $\Psi$ on $\mathbb C^n$ such that for every finite rank operator $A$,  then $\alpha(A)=\Psi(a_1, a_2, \dots,
      a_n)$, where   $\{a_i\}_{1\leq i\leq n}$ is the spectrum of
      $|A|$.

    Since von Neumann's result, these norms have been extended and generalized in different ways.  Schatten defined unitarily invariant norms on 2-sided ideals of the continuous functions on a Hilbert space, $B(\mathcal{H})$ (for example, see \cite{Schatten, Schatten1}).  Chen, Hadwin and Shen defined a class of unitarily invariant, $\|\cdot\|_1$-dominating, normalized norms on a finite von Neumann algebra \cite{CHS}.  Unitarily invariant norms also play an important role in the study of non-commutative Banach function spaces. For more information and history of unitarily invariant norms see Schatten \cite{Schatten}, Hewitt and Ross \cite{HR}, Goldberg and Krein \cite{GK}, or Simon \cite{Simon}.

    A. Beurling proved his classical theorem for invariant subspaces in 1949 \cite{Be}.  We recall the classical Beurling Theorem.  We let $\mathbb{T}$ be the unit circle, and we let $\mu$ be the measure on $\mathbb{T}$ such that $d\mu=\frac{1}{2\pi}d\theta$.  As is standard, we let $L^\infty(\mathbb{T},\mu)$ be the commutative von Neumann algebra on $\mathbb{T}$.  We define $L^2(\mathbb{T},\mu)$ to be the $\|\cdot\|_2$-norm closure of $L^\infty(\mathbb{T},\mu)$, which is a Hilbert space with orthonormal basis $\{z^n:n\in\mathbb{N}\}$.  We define the subspace $H^2=\overline{span(\{z^n:n\geq 0\}}^{\|\cdot\|_2}$ of $L^2(\mathbb{T},\mu)$, and define $H^\infty=H^2\cap L^\infty(\mathbb{T},\mu)$.  It is clear that $L^\infty(\mathbb{T},\mu)$ has a representation onto $B(L^2(\mathbb{T},\mu))$ given by the map $\phi\rightarrow M_\phi$, where $M_\phi$ is given by $M_\phi(f)=\phi f$ for every $f\in L^2(\mathbb{T},\mu)$.  Hence, $L^\infty(\mathbb{T}, \mu)$ and $H^\infty$ act naturally by left (or right) multiplication on $L^2(\mathbb{T},\mu)$.  The classical Beurling Theorem may be stated as follows (for more information, see \cite{B}):
       \emph{Suppose that $\mathcal{W}$ is a nonzero, closed, $H^\infty$ invariant subspace of $H^2$ (namely $z\mathcal{W}\subseteq \mathcal{W}$).  Then $\mathcal{W}=\phi H^2$ for some $\phi\in H^\infty$ such that $|\phi|=1 \,a.e.(\mu)$.}

    The Beurling Theorem has been extended in many ways (see \cite{Bo}, \cite{Halmos}, \cite{He}, \cite{HL}, \cite{Ho} and \cite{Sr}, among others).  One example is as follows: we define $L^p(\mathbb{T},\mu)$ to be the closure of $L^\infty(\mathbb{T},\mu)$ under the $\|\cdot\|_p$-norm.  Also define $H^p=\{f\in L^p(\mathbb{T},\mu) : \int_\mathbb{T} f(e^{i\theta})e^{in\theta}d\mu(\theta)=0 \,\,\forall\,\, n\in\mathcal{N}\}$. The Beurling Theorem may be extended to $H^\infty$-invariant subspaces of the Hardy spaces $H^p$ for $1\leq p\leq \infty$.
Some further extensions of Beurling's theorem can be found in
\cite{BL2} and \cite{CHS}.

    Typical examples of noncommutative Banach functional spaces include so called noncommutative  $L^p$-spaces, $L^p(\mathcal M,\tau)$, associated with   semifinite von Neumann
    algebras.  Suppose $\mathcal{M}$ is a von Neumann algebra with a semifinite, faithful, normal tracial weight $\tau$.  We consider $\mathcal{I}$, the set of elementary operators on $\mathcal{M}$ (when $\mathcal{M}$ is finite, $\mathcal{M}=\mathcal{I}$). We recall the construction of $L^p(\mathcal{M},\tau)$.  When $0<p<\infty$ define a mapping $\|\cdot\|_p:\mathcal{I}\rightarrow [0,\infty)$ by $\|x\|_p=(\tau(|p|))^{1/p}$ where $|x|=\sqrt(x^*x)$ for every $x\in\mathcal{I}$.  It is non-trivial to prove that $\|\cdot\|_p$ is a norm, called the \emph{p-norm}, when $1\leq p<\infty$.  We define the space $L^p(\mathcal{M},\tau)=\overline{\mathcal{I}}^{\|\cdot\|_p}$ for $0<p<\infty$.  When $p=\infty$, we set $L^\infty(\mathcal{M},\tau)=\mathcal{M}$, which acts naturally on $L^p(\mathcal{M},\tau)$ by right or left multiplication.


    In \cite{Sag}, L. Sager extends the work of Blecher and Labuschagne in \cite{BL2} from a finite von Neumann algebra  to von Neumann algebras $\mathcal{M}$ with a \emph{semifinite}, normal, faithful tracial weight $\tau$.  \emph{Suppose $0<p\leq\infty$, and $\mathcal{M}$ is a von Neumann algebra with a semifinite, faithful, normal tracial weight $\tau$.  Let $H^\infty$ be a semifinite subdiagonal subalgebra of $\mathcal{M}$, and $\mathcal{D}=H^\infty \cap (H^\infty)^*$.  Suppose that $\mathcal{K}$ is a closed subspace of $L^p(\mathcal{M},\tau)$ (if $p=\infty$, $\mathcal{K}$ is weak* closed), such that $H^\infty \mathcal{K}\subseteq \mathcal{K}$.  Then there exists a closed subspace $Y\subseteq L^p(\mathcal{M},\tau)$ and a family of partial isometries $\{u_\lambda\}\subseteq \mathcal{M}$ such that $\mathcal{K}=Y\oplus^{row}(\oplus^{row}_{\lambda\in\Lambda} H^p u_\lambda)$, where $Y=[H^\infty_0 Y]_p$, $u_\lambda Y^* =0$ for every $\lambda\in\Lambda$, and the $u_\lambda$ satisfy other conditions.} (See \cite{Sag} for more information.)

    In   \cite{CHS}, Chen, Hadwin and Shen proved a Beurling-type theorem  for unitarily invariant norms on finite von Neumann algebras. A motivation for this paper
    is to extend the result in \cite{CHS} to  the setting of unitarily invariant norms on {\em semifinite} von Neumann algebras. We define the family of unitarily invariant, locally $\|\cdot\|_1$-dominating, mutually continuous norms on the von Neumann
    algebra $\mathcal{M}$ with respect to the semifinite, faithful, normal tracial weight $\tau$. Suppose
     that $\mathcal{M}$ is a von Neumann algebra with a semifinite, faithful normal tracial weight $\tau$.
       We let $\mathcal{I}$ be the set of finite rank operators in $(\mathcal{M},\tau)$.    A norm $\alpha: \mathcal{I}\rightarrow [0,\infty)$ is a unitarily invariant, locally $\|\cdot\|_1$-dominating, mutually continuous norm with respect to $\tau$ if $\alpha$ is a norm for which the following conditions hold:
        \begin{enumerate}
            \item[(i)] for any unitaries $u,v\in\mathcal{M}$ and $x\in\mathcal{I}$, $\alpha(uxv)=\alpha(x)$;
            \item[(ii)] for every projection $e\in\mathcal{M}$ with $\tau(e)<\infty$ and any $x\in\mathcal{I}$, there exists $0<c(e)<\infty$ such that $\alpha(exe)\leq c(e) \|exe\|_1$;
            \item[(iii)]
              \begin{enumerate}
                \item[(a)] if $\{e_\lambda\}$ is an increasing net of projections in $\mathcal{I}$ such that $\tau(e_\lambda x-x)\rightarrow 0$ for every $x\in\mathcal{I}$, then $\alpha(e_\lambda x-x)\rightarrow 0$ for every $x\in\mathcal{I}$;
               \item[(b)] if $\{e_\lambda\}$ is a net of projections in $\mathcal{I}$ such that $\alpha(e_\lambda)\rightarrow 0$, then $\tau(e_\lambda)\rightarrow 0$.
             \end{enumerate}
        \end{enumerate}
      Chen, Hadwin and Shen's family of norms in \cite {CHS} is a subset of this family of norms.
      We also show that the norm $\|\cdot\|_{I(\tau)}$ on a Banach function space $\mathcal I(\tau)$ is a unitarily invariant, $\|\cdot\|_1$-dominating, mutually continuous norm.

    However, many of the methods used by Chen, Hadwin and Shen no longer apply when $\mathcal{M}$ is a semifinite von Neumann algebra.  We use a similar method to extend their theorem as in Sager's work on $L^p(\mathcal{M},\tau)$ spaces (see \cite{Sag}).  We therefore prove a series of density lemmas for the $L^\alpha(\mathcal{M},\tau)$ spaces.

    {\renewcommand{\thetheorem}{\ref{lemma3.2}}
        \begin{lemma}\label{lemma3.2}
        Suppose $\mathcal{M}$ is a von Neumann algebra with a faithful, normal, semifinite tracial weight $\tau$, and that $H^\infty$ is a semifinite, subdiagonal subalgebra of $\mathcal{M}$.  Suppose also that $\alpha$ is a unitarily invariant, locally $\|\cdot\|_1$-dominating, mutually continuous norm with respect to $\tau$.  Assume that $\mathcal{K}$ is a closed subspace of $L^\alpha (\mathcal{M},\tau)$ such that $H^\infty \mathcal{K}\subseteq\mathcal{K}$. Then the following hold:
        \begin{enumerate}
            \item $\mathcal{K}\cap \mathcal{M} = \overline{\mathcal{K}\cap\mathcal{M}}^{w^*} \cap L^\alpha(\mathcal{M},\tau)$
            \item $\mathcal{K}=[\mathcal{K}\cap \mathcal{M}]_\alpha$
        \end{enumerate}
    \end{lemma}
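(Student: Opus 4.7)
The inclusions $\mathcal{K}\cap\mathcal{M}\subseteq\overline{\mathcal{K}\cap\mathcal{M}}^{w^*}\cap L^\alpha(\mathcal{M},\tau)$ in~(1) and $[\mathcal{K}\cap\mathcal{M}]_\alpha\subseteq\mathcal{K}$ in~(2) are both immediate; the latter uses that $\mathcal{K}$ is $\alpha$-norm closed. My plan is to establish the reverse inclusions through one common density mechanism: a diagonal approximate identity inside $H^\infty$. Since $\tau|_{\mathcal{D}}$ is semifinite, I will fix an increasing net of projections $\{p_\nu\}\subseteq\mathcal{D}\subseteq H^\infty$ with $\tau(p_\nu)<\infty$ and $p_\nu\uparrow 1$ strongly. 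Left multiplication by each $p_\nu$ preserves $\mathcal{K}$, and applying condition~(iii)(a) to $\mathcal{I}$ and extending by $\alpha$-density of $\mathcal{I}$ in $L^\alpha(\mathcal{M},\tau)$ will yield $\alpha(p_\nu y-y)\to 0$ for every $y\in L^\alpha$; taking adjoints together with $\alpha(y^*)=\alpha(y)$ (from unitary invariance) then gives $\alpha(yp_\nu-y)\to 0$ as well.

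For part~(2), given $x\in\mathcal{K}$, the truncations $x_\nu:=p_\nu x\in\mathcal{K}$ satisfy $\alpha(x_\nu-x)\to 0$, and each $x_\nu$ has left support dominated by the finite-trace projection $p_\nu$. To replace $x_\nu$ by a bounded element of $\mathcal{K}\cap\mathcal{M}$, I will invoke a semifinite Arveson-style outer-operator factorization of the kind used in~\cite{Sag} (cf.\ also the Ji--Saito and Blecher--Labuschagne framework): for each $n$ produce $h_{\nu,n}\in H^\infty$ with $\|h_{\nu,n}\|\leq 1$, $h_{\nu,n}x_\nu\in\mathcal{M}$ uniformly bounded, and $\alpha(h_{\nu,n}x_\nu-x_\nu)\to 0$ as $n\to\infty$. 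Since $h_{\nu,n}\in H^\infty$, $h_{\nu,n}x_\nu\in\mathcal{K}\cap\mathcal{M}$, and a diagonal extraction places $x$ in $[\mathcal{K}\cap\mathcal{M}]_\alpha$.

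For part~(1), take $x\in\overline{\mathcal{K}\cap\mathcal{M}}^{w^*}\cap L^\alpha(\mathcal{M},\tau)$, so automatically $x\in\mathcal{M}\cap L^\alpha$. Kaplansky density together with passage to convex combinations will give a bounded net $\{x_\mu\}\subseteq\mathcal{K}\cap\mathcal{M}$, $\|x_\mu\|\leq\|x\|$, with $x_\mu\to x$ in the strong$^{*}$ topology. For each fixed $\nu$, the compressed net $p_\nu x_\mu\in\mathcal{K}\cap\mathcal{M}$ is uniformly $L^2$-bounded (via $\|p_\nu x_\mu\|_2\leq\|x\|\sqrt{\tau(p_\nu)}$) and converges weakly in $L^2(\mathcal{M},\tau)$ to $p_\nu x$; Mazur's theorem will then furnish convex combinations in $\mathcal{K}\cap\mathcal{M}$ converging to $p_\nu x$ in $L^2$-norm. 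Combining with condition~(ii) after a right-side truncation carried out by the outer-operator step above (so as to stay inside $\mathcal{K}\cap\mathcal{M}$) upgrades this to $\alpha$-convergence, and because $\alpha(p_\nu x-x)\to 0$, a diagonal argument yields $x\in[\mathcal{K}\cap\mathcal{M}]_\alpha\subseteq\mathcal{K}$, which forces $x\in\mathcal{K}\cap\mathcal{M}$.

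The chief technical obstacle throughout will be the one-sided nature of the hypothesis $H^\infty\mathcal{K}\subseteq\mathcal{K}$: $\mathcal{K}$ is only left $H^\infty$-invariant, so the standard spectral cutoff on the right, $x\mapsto x\,e^{|x|}([0,n])$, immediately leaves $\mathcal{K}$. The outer-operator factorization --- producing $H^\infty$ elements whose \emph{left} action on $x$ both bounds $x$ and approximates the identity in the $\alpha$-norm --- is the key non-elementary input, and securing it in the present unitarily invariant, locally $\|\cdot\|_1$-dominating semifinite framework is where the bulk of the work will lie.
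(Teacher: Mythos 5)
Your part (2) is essentially the paper's argument: the factorization you invoke is exactly the paper's Lemma \ref{lemma2.12} (built on Proposition 5.2 of \cite{CHS}), which for a finite-trace projection $e\in\mathcal{D}$ produces $h_1\in eH^\infty e$ and $h_2\in eH^\alpha e$ with $h_1h_2=h_2h_1=e$ and $h_1ex\in\mathcal{M}$; approximating $h_2$ by $a_n\in H^\infty$ in $\alpha$-norm then gives $a_nh_1ex\to ex$ with $a_nh_1ex\in\mathcal{K}\cap\mathcal{M}$, which is your net $h_{\nu,n}x_\nu$ up to relabeling. So for (2) you have identified the right key input and the right way to use it.

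Part (1) contains a genuine gap, in two places. First, condition (ii) of Definition \ref{def3.1} is a \emph{lower} bound, $\alpha(exe)\geq c(e)\|exe\|_1$: it converts $\alpha$-convergence into $\|\cdot\|_1$-convergence on finite corners, never the reverse. Your plan to ``upgrade'' the $L^2$-norm convergence obtained from Mazur's theorem to $\alpha$-convergence by ``combining with condition (ii)'' therefore runs in the wrong direction, and no axiom in Definition \ref{def3.1} supplies an upper bound of $\alpha$ by $\|\cdot\|_2$ on bounded parts of $e\mathcal{M}e$. Second, $\mathcal{K}\cap\mathcal{M}$ is only a subspace (a left $H^\infty$-module), not a $*$-subalgebra, so Kaplansky density does not give an approximating net $x_\mu\in\mathcal{K}\cap\mathcal{M}$ with $\|x_\mu\|\leq\|x\|$; for a general convex set the bounded part of the weak$^*$ closure may be strictly larger than the weak$^*$ closure of the bounded part, and without that uniform bound your estimate $\|p_\nu x_\mu\|_2\leq\|x\|\sqrt{\tau(p_\nu)}$ collapses. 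The paper avoids both problems by arguing dually: assuming $x\notin\mathcal{K}\cap\mathcal{M}$, Hahn--Banach gives $\varphi\in L^\alpha(\mathcal{M},\tau)^{\#}$ annihilating $\mathcal{K}\cap\mathcal{M}$ with $\varphi(x)\neq 0$; the functional $z\mapsto\varphi(e_\lambda z)$ on $\mathcal{M}$ is shown to be normal via condition (3a), hence of the form $\tau(\cdot\,\xi)$ with $\xi\in L^1(\mathcal{M},\tau)$, and testing the weak$^*$ convergence $y_\mu\to x$ directly against $\xi e\in L^1(\mathcal{M},\tau)$ yields the contradiction. You would need to replace your $L^2$/Mazur step by a separation argument of this kind (or find a genuine substitute for the missing norm upgrade) for part (1) to close.
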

    }

    {\renewcommand{\thetheorem}{\ref{lemma3.3}}
        \begin{lemma}\label{lemma3.3}
        Suppose $\mathcal{M}$ is a  von Neumann algebra with a faithful, normal, semifinite tracial weight $\tau$, and suppose that $\alpha$ is a unitarily invariant, locally $\|\cdot\|_1$-dominating, mutually continuous norm with respect to $\tau$.  Let $H^\infty$ be a semifinite, subdiagonal subalgebra of $\mathcal{M}$.  Assume that $\mathcal{K}$ is a weak* closed subspace of $\mathcal{M}$ such that $H^\infty \mathcal{K}\subseteq \mathcal{K}$. Then
            $$\mathcal{K}=\overline{[\mathcal{K}\cap L^\alpha(\mathcal{M},\tau)]_\alpha \cap \mathcal{M}}^{w^*}.$$
    \end{lemma}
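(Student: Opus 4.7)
The plan is to prove the two inclusions separately. The inclusion $\mathcal{K} \subseteq \overline{[\mathcal{K} \cap L^\alpha(\mathcal{M},\tau)]_\alpha \cap \mathcal{M}}^{w^*}$ will follow from a direct semifinite approximation; the reverse inclusion is the technical heart and will be proved by combining Lemma \ref{lemma3.2} with a duality argument in the predual $\mathcal{M}_* \cong L^1(\mathcal{M},\tau)$.

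For the forward inclusion, I use the semifiniteness of $\tau|_{\mathcal{D}}$ to fix an increasing net $\{e_\lambda\}$ of projections in $\mathcal{D}$ with $\tau(e_\lambda) < \infty$ and $e_\lambda \nearrow 1$ strongly. For $x \in \mathcal{K}$, left-invariance gives $e_\lambda x \in H^\infty \mathcal{K} \subseteq \mathcal{K}$, and the left support of $e_\lambda x$ is majorized by $e_\lambda$, so $e_\lambda x \in \mathcal{I} \subseteq L^\alpha(\mathcal{M},\tau)$. Hence $e_\lambda x \in \mathcal{K} \cap L^\alpha(\mathcal{M},\tau) \subseteq [\mathcal{K} \cap L^\alpha(\mathcal{M},\tau)]_\alpha \cap \mathcal{M}$; boundedness of $\{e_\lambda x\}$ by $\|x\|$ together with $e_\lambda x \to x$ strongly yields weak${}^*$-convergence to $x$.

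For the reverse inclusion, set $\mathcal{K}' := [\mathcal{K} \cap L^\alpha(\mathcal{M},\tau)]_\alpha$, an $\alpha$-closed $H^\infty$-invariant subspace of $L^\alpha(\mathcal{M},\tau)$. Applying Lemma \ref{lemma3.2} to $\mathcal{K}'$ gives $\mathcal{K}' \cap \mathcal{M} = \overline{\mathcal{K}' \cap \mathcal{M}}^{w^*} \cap L^\alpha(\mathcal{M},\tau)$; since the forward inclusion applied to both $\mathcal{K}$ and $\mathcal{W} := \overline{\mathcal{K}' \cap \mathcal{M}}^{w^*}$ shows each is the weak${}^*$-closure of its $L^\alpha$-part, it suffices to verify that any $y \in \mathcal{M}$ with $\alpha(y_n - y) \to 0$ for some $y_n \in \mathcal{K} \cap L^\alpha(\mathcal{M},\tau)$ actually lies in $\mathcal{K}$. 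By the dual characterization $\mathcal{K} = {}^\perp(\mathcal{K}_\perp)$ with $\mathcal{K}_\perp \subseteq L^1(\mathcal{M},\tau)$ the pre-annihilator, this reduces to showing $\tau(yf) = 0$ for every $f \in \mathcal{K}_\perp$. I exploit that left-$H^\infty$-invariance of $\mathcal{K}$ implies right-$H^\infty$-invariance of $\mathcal{K}_\perp$, so $fe_\mu \in \mathcal{K}_\perp$ for each $e_\mu$ from the net, and $fe_\mu \to f$ in $L^1$ while $y$ is bounded, reducing the problem to showing $\tau(y \cdot fe_\mu) = 0$ for each $\mu$. Since $\tau(y_n \cdot fe_\mu) = 0$, the remaining point is to justify $\tau((y - y_n) fe_\mu) \to 0$; using $fe_\mu(1 - e_\mu) = 0$ and trace cyclicity this rewrites as $\tau(fe_\mu \cdot e_\mu(y - y_n))$, with $e_\mu(y - y_n) \in \mathcal{I}$ and $\alpha(e_\mu(y - y_n)) \to 0$. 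The locally $\|\cdot\|_1$-dominating hypothesis (ii) transfers this to $\|e_\mu(y - y_n) e_\mu\|_1 \to 0$ on the symmetric finite-trace corner, and a further $L^1$-truncation of $f$ inside $\mathcal{K}_\perp$ (again by right-multiplication by projections of $\mathcal{D}$) delivers the vanishing.

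The main obstacle is exactly this last step: $\alpha$-convergence of $y_n$ to $y$ does not automatically yield convergence in the weak${}^*$-topology of $\mathcal{M}$, because $\{y_n\}$ need not be uniformly bounded in operator norm and a general $f \in L^1(\mathcal{M},\tau)$ need not belong to the dual norm space $L^{\alpha^*}(\mathcal{M},\tau)$. The resolution routes through the right-$H^\infty$-invariance of $\mathcal{K}_\perp$, which allows truncating $f$ to have finite right support while staying in $\mathcal{K}_\perp$; once the pairing is confined to a finite-trace corner, condition (ii) converts $\alpha$-convergence into $L^1$-convergence and closes the argument.
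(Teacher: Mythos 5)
Your overall strategy coincides with the paper's: prove the forward inclusion by truncating $x\in\mathcal{K}$ with a net of finite-trace projections from $\mathcal{D}$, and prove the reverse inclusion by reducing (via weak*-closedness of $\mathcal{K}$ and duality with the predual $L^1(\mathcal{M},\tau)$) to showing $[\mathcal{K}\cap L^\alpha(\mathcal{M},\tau)]_\alpha\cap\mathcal{M}\subseteq\mathcal{K}$, then localizing the annihilating functional to a finite corner $\mathcal{M}e$ and converting $\alpha$-convergence into $\|\cdot\|_1$-convergence through local $\|\cdot\|_1$-domination. The forward half and the reduction are fine. The gap is in your final step. Once you are down to showing $\tau\bigl((y-y_n)fe_\mu\bigr)\to 0$, local domination (together with the polar-decomposition trick $\|e z\|_1=\|e\sqrt{ezz^*e}\,e\|_1$) does give $\|e_\mu(y-y_n)\|_1\to 0$; but to conclude anything from H\"older you then need the other factor to be a \emph{bounded} operator, and a general $f\in\mathcal{K}_\perp\subseteq L^1(\mathcal{M},\tau)$ right-multiplied by a finite projection of $\mathcal{D}$ is still only an $L^1$ element: right truncation shrinks the support but does not bound the operator (compare $t^{-1/2}\chi_{(0,1/2)}$ in $L^1(0,1)$). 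Pairing an $L^1$-null (or $\alpha$-null) sequence against an unbounded $L^1$ density is exactly what can fail, so ``a further $L^1$-truncation of $f$ inside $\mathcal{K}_\perp$ by right-multiplication by projections of $\mathcal{D}$'' cannot deliver the vanishing, and nothing else in your argument supplies boundedness.

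The paper closes precisely this hole with the factorization Lemma \ref{lemma2.12} (a Riesz--Szeg\"{o}-type factorization imported from Proposition 5.2 of \cite{CHS}): for $\xi e\in L^1(\mathcal{M},\tau)e$ there exist $h_3\in eH^\infty e$ and $h_4\in eH^1e$ with $h_3h_4=h_4h_3=e$ and $\xi eh_3\in\mathcal{M}$; approximating $h_4$ by $k_n\in H^\infty$ in $\|\cdot\|_1$ yields a \emph{bounded} element $z=\xi eh_3k_N\in\mathcal{M}e$ that still annihilates $\mathcal{K}$ (because $eh_3k_N\in H^\infty$ and $\mathcal{K}$ is left $H^\infty$-invariant) and still satisfies $\tau(xz)\neq 0$. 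Only with $z\in\mathcal{M}$ does the estimate $|\tau((x-x_n)z)|\leq\|e(x-x_n)\|_1\,\|z\|$ close the contradiction. Some such device producing a bounded separating functional inside the annihilator is the missing idea in your write-up. Two smaller points: your parenthetical ``$e_\mu(y-y_n)\in\mathcal{I}$'' is unjustified, since $y_n$ need only lie in $L^\alpha(\mathcal{M},\tau)$; and the claim $\|fe_\mu-f\|_1\to 0$ for $f\in L^1(\mathcal{M},\tau)$, while true, deserves a line of justification (approximate $f$ by elements of $\mathcal{I}$ in $\|\cdot\|_1$).
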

    }

    {\renewcommand{\thetheorem}{\ref{lemma3.4}}
        \begin{lemma}\label{lemma3.4}
        Suppose $\mathcal{M}$ is a semifinite von Neumann algebra with a faithful, normal tracial weight $\tau$, and suppose that $\alpha$ is a unitarily invariant, locally $\|\cdot\|_1$-dominating, mutually-continuous norm with respect to $\tau$.  Let $H^\infty$ be a semifinite, subdiagonal subalgebra of $\mathcal{M}$.  Assume that $S$ is a subset of $\mathcal{M}$ such that $H^\infty S\subseteq S$.  Then
            $$[S\cap L^\alpha(\mathcal{M},\tau)]_\alpha= [\overline{S}^{w^*}\cap L^\alpha (\mathcal{M}, \tau)]_\alpha.$$
    \end{lemma}
    }

    Follow these results, we are able to prove a noncommutative Beurling-Chen-Hadwin-Chen theorem for unitarily invariant, $\|\cdot\|_1$-dominating, mututally continuous with respect to $\tau$ norms on a von Neumann algebra $\mathcal{M}$ with a semifinite, faithful, normal tracial weight $\tau$.

    {\renewcommand{\thetheorem}{\ref{theorem3.1}}
        \begin{theorem}\label{theorem3.1}
        Let $\mathcal{M}$ be a von Neumann algebra with a faithful, normal semifinite tracial weight $\tau$, and $H^\infty$ be a semifinite subdiagonal subalgebra of $\mathcal{M}$.  Let $\alpha$ be a  unitarily invariant, locally $\|\cdot\|_1$-dominating, mutually continuous norm with respect to $\tau$.  Let $\mathcal{D}=H^\infty\cap (H^\infty)^*$.  Assume that $\mathcal{K}$ is a closed subspace of $L^\alpha(\mathcal{M},\tau)$ such that $H^\infty \mathcal{K}\subseteq \mathcal{K}$.  Then, there exist a closed subspace $Y$ of $L^\alpha(\mathcal{M},\tau)$ and a family $\{u_\lambda\}$ of partial isometries in $\mathcal{M}$ such that
            \begin{enumerate}
                \item[(i)] $u_\lambda Y^*=0$ for every $\lambda\in\Lambda$;\
                \item[(ii)] $u_\lambda u_\lambda^*\in\mathcal{D}$, and $u_\lambda u_\mu^*=0$ for every $\lambda, \mu\in \Lambda$ with $\lambda\neq\mu$;
                \item[(iii)] $Y=[H^\infty_0 Y]_\alpha$;
                \item[(iv)] $\mathcal{K}=Y\oplus^{row}(\oplus^{row}_{\lambda\in\Lambda} H^\alpha u_\lambda)$.
            \end{enumerate}
    \end{theorem}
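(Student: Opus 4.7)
My plan is to reduce Theorem~\ref{theorem3.1} to its weak*-closed ($p=\infty$) counterpart in $\mathcal M$ via the density Lemmas~\ref{lemma3.2}, \ref{lemma3.3}, and \ref{lemma3.4}, and then to lift the resulting decomposition back to $L^\alpha(\mathcal M,\tau)$. Together these three lemmas provide an order-preserving bijection between closed $H^\infty$-invariant subspaces of $L^\alpha(\mathcal M,\tau)$ and weak*-closed $H^\infty$-invariant subspaces of $\mathcal M$, given by $\mathcal K \mapsto \overline{\mathcal K\cap\mathcal M}^{w^*}$ with inverse $\mathcal N \mapsto [\mathcal N \cap L^\alpha(\mathcal M,\tau)]_\alpha$. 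One then applies Sager's $L^p$-Beurling theorem (in the $p=\infty$ formulation quoted in the introduction, \cite{Sag}) to the image and translates back.

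\textbf{Execution.} Set $\mathcal K_\infty := \overline{\mathcal K \cap \mathcal M}^{w^*}$; since $\mathcal K \cap \mathcal M$ is $H^\infty$-invariant, so is $\mathcal K_\infty$. Sager's theorem yields a weak*-closed subspace $Y_\infty \subseteq \mathcal M$ and a family $\{u_\lambda\}_{\lambda \in \Lambda}$ of partial isometries in $\mathcal M$ satisfying $u_\lambda Y_\infty^* = 0$, $u_\lambda u_\lambda^* \in \mathcal D$, $u_\lambda u_\mu^* = 0$ for $\lambda \neq \mu$, $Y_\infty = \overline{H^\infty_0 Y_\infty}^{w^*}$, and $\mathcal K_\infty = Y_\infty \oplus^{row}(\oplus^{row}_{\lambda\in\Lambda} H^\infty u_\lambda)$. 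Define $Y := [Y_\infty \cap L^\alpha(\mathcal M,\tau)]_\alpha$. By Lemma~\ref{lemma3.2}(2), $\mathcal K = [\mathcal K \cap \mathcal M]_\alpha$, and applying Lemma~\ref{lemma3.4} with $S = \mathcal K \cap \mathcal M$ yields $[\mathcal K \cap \mathcal M]_\alpha = [\mathcal K_\infty \cap L^\alpha]_\alpha$. Intersecting the row decomposition of $\mathcal K_\infty$ with $L^\alpha$ and $\alpha$-closing then gives
\[
\mathcal K = Y \oplus^{row}\Bigl(\oplus^{row}_{\lambda\in\Lambda} H^\alpha u_\lambda\Bigr),
\]
where $H^\alpha u_\lambda = [H^\infty u_\lambda]_\alpha$ is a closed subspace of $L^\alpha$ since each $u_\lambda$ is a contraction in $\mathcal M$. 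Property~(iii) now follows from a second application of Lemma~\ref{lemma3.4} to $S := H^\infty_0 Y_\infty$ (which satisfies $H^\infty S \subseteq S$):
\[
[H^\infty_0 Y]_\alpha = [H^\infty_0 Y_\infty \cap L^\alpha]_\alpha = [\overline{H^\infty_0 Y_\infty}^{w^*} \cap L^\alpha]_\alpha = [Y_\infty \cap L^\alpha]_\alpha = Y.
\]
Properties~(i) and (ii) are inherited from the $\mathcal M$-level since the partial isometries $u_\lambda$ are unchanged.

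\textbf{Main obstacle.} The delicate step will be justifying that the $\alpha$-closure commutes with the (possibly uncountable) row-sum, so that $[\mathcal K_\infty \cap L^\alpha]_\alpha$ genuinely splits as $Y \oplus^{row}(\oplus^{row}_\lambda H^\alpha u_\lambda)$ rather than merely containing this row-sum. This requires showing that every $x \in \mathcal K$ can be $\alpha$-approximated by elements of $\mathcal K \cap \mathcal M$ whose $\mathcal K_\infty$-components individually converge in $\alpha$-norm, and that the row-orthogonality encoded by $u_\lambda u_\mu^* = 0$ persists at the $\alpha$-level. Both points rely essentially on the locally $\|\cdot\|_1$-dominating and mutual continuity axioms of $\alpha$: without them, row summability in $\mathcal M$ need not descend to $L^\alpha$, and the direct-sum structure could collapse.
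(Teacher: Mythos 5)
Your proposal follows the paper's proof essentially verbatim: pass to $\mathcal K_1=\overline{\mathcal K\cap\mathcal M}^{w^*}$, invoke Sager's weak*-closed Beurling theorem, set $Y=[Y_1\cap L^\alpha(\mathcal M,\tau)]_\alpha$, and translate the decomposition back using Lemmas~\ref{lemma3.2}, \ref{lemma3.3}, and \ref{lemma3.4}; even your ``main obstacle'' (distributing $[\;\cdot\;\cap L^\alpha]_\alpha$ over the row sum) is resolved in the paper by exactly the Lemma~\ref{lemma3.4} manipulation you anticipate, together with the orthogonality conditions (a) and (b). The only step you elide is the first equality $[H^\infty_0 Y]_\alpha=[H^\infty_0Y_1\cap L^\alpha(\mathcal M,\tau)]_\alpha$ in your chain for (iii), whose nontrivial inclusion the paper obtains by first writing $Y_1=\overline{[Y_1\cap L^\alpha(\mathcal M,\tau)]_\alpha\cap\mathcal M}^{w^*}$ via Lemma~\ref{lemma3.3} and then moving $H^\infty_0$ inside the weak* closure.
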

    }

    We can fully characterize $\mathcal{K}$ in the case when $K\subseteq L^\alpha(\mathcal{M},\tau)$ is $\mathcal{M}$-invariant.

    {\renewcommand{\thetheorem}{\ref{corollary5.5}}
        \begin{corollary} \label{corollary5.5}
            Suppose that $\mathcal{M}$ is a von Neumann algebra with a faithful, normal, semifinite tracial weight $\tau$.  Let $\alpha$ be a unitarily invariant, locally $\|\cdot\|_1$-dominating, mutually continuous norm with respect to $\tau$.  Let $\mathcal{K}$ be a subset of $L^\alpha$ such that $\mathcal{M}\mathcal{K}\subseteq\mathcal{K}$.  Then there exists a projection $q$ with $\mathcal{K}=\mathcal{M}q$.
        \end{corollary}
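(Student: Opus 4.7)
I would specialize Theorem \ref{theorem3.1} to $H^\infty = \mathcal{M}$ and then collapse the resulting row decomposition to a single projection. When $H^\infty = \mathcal{M}$, the diagonal is $\mathcal{D} = \mathcal{M} \cap \mathcal{M}^* = \mathcal{M}$, the conditional expectation $\Phi$ is the identity, so $H^\infty_0 = \{0\}$ and $H^\alpha = L^\alpha(\mathcal{M}, \tau)$. Theorem \ref{theorem3.1} then produces partial isometries $\{u_\lambda\}_{\lambda \in \Lambda} \subseteq \mathcal{M}$ with $u_\lambda u_\mu^* = 0$ for $\lambda \neq \mu$ and $u_\lambda u_\lambda^* \in \mathcal{M}$, together with a closed subspace $Y$; condition (iii) of the theorem, $Y = [H^\infty_0 Y]_\alpha$, forces $Y = \{0\}$, so
\[\mathcal{K} = \oplus^{row}_{\lambda \in \Lambda} L^\alpha u_\lambda.\]

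Next, set $q_\lambda := u_\lambda^* u_\lambda$. I would observe that $L^\alpha u_\lambda = L^\alpha q_\lambda$: indeed $x u_\lambda = (x u_\lambda) q_\lambda \in L^\alpha q_\lambda$, and conversely $y q_\lambda = (y u_\lambda^*) u_\lambda \in L^\alpha u_\lambda$. The family $\{q_\lambda\}$ is mutually orthogonal, since $q_\lambda q_\mu = u_\lambda^* (u_\lambda u_\mu^*) u_\mu = 0$ whenever $\lambda \neq \mu$. Define $q := \sum_{\lambda \in \Lambda} q_\lambda$, a projection in $\mathcal{M}$. The containment $\mathcal{K} \subseteq L^\alpha q$ is immediate from $u_\lambda q = u_\lambda q_\lambda = u_\lambda$, so that $(x u_\lambda) q = x u_\lambda$ for every $\lambda$.

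For the reverse inclusion $L^\alpha q \subseteq \mathcal{K}$, given $y \in L^\alpha$ I would approximate $yq$ by the finite partial sums $y e_F$ with $e_F := \sum_{\lambda \in F} q_\lambda$, each of which lies in $\oplus^{row}_{\lambda \in F} L^\alpha q_\lambda \subseteq \mathcal{K}$. The main obstacle is the $\alpha$-norm convergence $y e_F \to yq$ as $F$ runs over finite subsets of $\Lambda$: since $\Lambda$ may be uncountable and $\tau(q)$ may be infinite, there is no uniform estimate available. The workaround is the mutual-continuity axiom (iii)(a) combined with the local $\|\cdot\|_1$-dominating property (ii); between them they reduce $\alpha$-norm convergence to a $\tau$-convergence statement on finite-trace corners, first for $y \in \mathcal{I}$ and then by density for general $y \in L^\alpha$. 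This yields $yq \in \mathcal{K}$, so $\mathcal{K} = L^\alpha q$, which is the description $\mathcal{K} = \mathcal{M} q$ claimed by the corollary (interpreted inside $L^\alpha(\mathcal{M}, \tau)$).
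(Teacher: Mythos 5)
Your proposal follows the same route as the paper's own proof: specialize Theorem \ref{theorem3.1} to $H^\infty=\mathcal{M}$, note that $H^\infty_0=\{0\}$ forces $Y=\{0\}$, replace each $H^\alpha u_\lambda=L^\alpha(\mathcal{M},\tau)u_\lambda$ by $L^\alpha(\mathcal{M},\tau)u_\lambda^*u_\lambda$, and sum the mutually orthogonal projections $u_\lambda^*u_\lambda$ to a single projection $q$. The only difference is that you explicitly flag and sketch the convergence $ye_F\to yq$ needed to identify the row sum $\oplus^{row}_\lambda L^\alpha(\mathcal{M},\tau)q_\lambda$ with $L^\alpha(\mathcal{M},\tau)q$, a step the paper passes over without comment.
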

    }

    Furthermore, when $\mathcal{M}$ is a factor, we can weaken the conditions on $\alpha$.

    {\renewcommand{\thetheorem}{\ref{corollary6.2}}
        \begin{corollary} \label{corollary6.2}
        Suppose $\mathcal{M}$ is a factor with a faithful, normal tracial weight $\tau$.  Let $\alpha:\mathcal{I}\rightarrow[0,\infty)$, where $\mathcal{I}$ is the set of elementary operators in $\mathcal{M}$, be a unitarily invariant norm such that any net $\{e_\lambda\}$ in $\mathcal{M}$ with $e_\lambda \uparrow I$ in the weak* topology implies that $\alpha((e_\lambda - I)x)\rightarrow 0$.  Let $\mathcal{D}=H^\infty\cap (H^\infty)^*$.  Assume that $\mathcal{K}$ is a closed subspace of $L^\alpha(\mathcal{M},\tau)$ such that $H^\infty \mathcal{K}\subseteq \mathcal{K}$.  Then, there exist a closed subspace $Y$ of $L^\alpha(\mathcal{M},\tau)$ and a family $\{u_\lambda\}$ of partial isometries in $\mathcal{M}$ such that
            \begin{enumerate}
                \item[(i)] $u_\lambda Y^*=0$ for every $\lambda\in\Lambda$;\
                \item[(ii)] $u_\lambda u_\lambda^*\in\mathcal{D}$, and $u_\lambda u_\mu^*$ for every $\lambda, \mu\in \Lambda$ with $\lambda\neq\mu$;
                \item[(iii)] $Y=[H^\infty_0 Y]_\alpha$;
                \item[(iv)] $\mathcal{K}=Y\oplus^{row}(\oplus^{row}_{\lambda\in\Lambda} H^\alpha u_\lambda)$.
            \end{enumerate}
    \end{corollary}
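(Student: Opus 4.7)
Since the conclusion of this corollary matches that of Theorem \ref{theorem3.1}, the strategy is to reduce to that theorem by verifying its hypotheses on $\alpha$ from the weaker conditions given here, exploiting the factor assumption on $\mathcal{M}$.

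The two mutual continuity conditions follow fairly directly. For condition (iii)(b), the factor hypothesis together with Murray--von Neumann comparison implies that any two projections in $\mathcal{I}$ of the same finite trace are unitarily equivalent; by unitary invariance, $\alpha(e) = \phi(\tau(e))$ for a monotone function $\phi$ on $[0,\infty)$ with $\phi(t) > 0$ for $t > 0$ (since $\alpha$ is a norm), so $\phi(\tau(e_\lambda)) \to 0$ forces $\tau(e_\lambda) \to 0$. For condition (iii)(a), suppose $\{e_\lambda\}$ is an increasing net in $\mathcal{I}$ with $\tau((I - e_\lambda) x) \to 0$ for every $x \in \mathcal{I}$; testing against an arbitrary finite-trace projection $f$ forces $f \leq \sup_\lambda e_\lambda$, and semifiniteness of $\tau$ then gives $\sup_\lambda e_\lambda = I$. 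Since monotone convergence of projections coincides with weak* convergence, the stated hypothesis on $\alpha$ delivers $\alpha((I - e_\lambda) x) \to 0$.

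The main obstacle is condition (ii), the locally $\|\cdot\|_1$-dominating property, which is genuinely stronger than the weak* continuity assumption: for example, $\alpha = \|\cdot\|_2$ on a type $\mathrm{II}_\infty$ factor satisfies the corollary's hypothesis but fails (ii), since $\|p\|_2/\|p\|_1 = 1/\sqrt{\tau(p)}$ is unbounded as $\tau(p) \to 0$. To handle this, I would revisit the proof of Theorem \ref{theorem3.1} and show that condition (ii) enters only through the density lemmas \ref{lemma3.2}--\ref{lemma3.4}, where it is used to pass from weak* convergence to $\alpha$-norm convergence on bounded sets, and that these density statements can be re-established in the factor case from the weak* continuity alone. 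The key tool is the one-sided truncation $e_n x$ for an increasing sequence $e_n \uparrow I$ of finite-trace projections drawn from the diagonal $\mathcal{D}$ (available by semifiniteness of $\tau|_{\mathcal{D}}$): the truncations remain in $H^\infty \mathcal{K} \subseteq \mathcal{K}$, and the weak* continuity hypothesis forces $\alpha(x - e_n x) \to 0$; the symmetric right-sided estimate follows from $\alpha(y) = \alpha(y^\ast)$ via unitary invariance combined with Russo--Dye. Once the density lemmas are re-established in the factor setting, the Beurling-type decomposition argument in the proof of Theorem \ref{theorem3.1} applies without modification and produces the required closed subspace $Y \subseteq L^\alpha(\mathcal{M}, \tau)$ and family of partial isometries $\{u_\lambda\} \subseteq \mathcal{M}$ satisfying properties (i)--(iv).
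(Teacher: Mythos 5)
Your overall strategy --- reduce to Theorem \ref{theorem3.1} by checking that the hypotheses on $\alpha$ in the corollary already force $\alpha$ to be a unitarily invariant, locally $\|\cdot\|_1$-dominating, mutually continuous norm --- is exactly the paper's route: the corollary is stated there as an immediate consequence of Theorem \ref{theorem3.1} together with Proposition \ref{prop3.5}, and your treatment of the two mutual-continuity conditions (comparison of projections in a factor, so that $\alpha$ on projections is a monotone function of the trace) is in the same spirit as the paper's argument for condition (iii)(b).

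The gap is in your handling of condition (ii). Your claimed counterexample is based on a misreading of ``locally $\|\cdot\|_1$-dominating'': the constant $c(e)$ is allowed to depend on the finite-trace projection $e$, and the inequality $\alpha(exe)\geq c(e)\|exe\|_1$ is only required for elements compressed by that fixed $e$. For $\alpha=\|\cdot\|_2$ on a type $\mathrm{II}_\infty$ factor one has $\|exe\|_1=\tau(|exe|)\leq \tau(e)^{1/2}\|exe\|_2$ by the Cauchy--Schwarz inequality (the support of $|exe|$ lies under $e$), so $c(e)=\tau(e)^{-1/2}$ works and $\|\cdot\|_2$ \emph{does} satisfy (ii); the unboundedness of $\|p\|_2/\|p\|_1$ as $\tau(p)\to 0$ is irrelevant, since that ratio blowing up only helps the required lower bound. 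In fact Proposition \ref{prop3.5} proves that in a semifinite factor \emph{every} unitarily invariant norm with your continuity hypothesis is locally $\|\cdot\|_1$-dominating: one passes to the finite factor $\mathcal{M}_e=e\mathcal{M}e$ with tracial state $\tau_e=\tau(\cdot)/\tau(e)$ and uses the Dixmier approximation property to write $\tau_e(|x|)e$ as a norm-limit of convex combinations $\sum_i c_iu_ixu_i^*$, whence $\|exe\|_1\leq \frac{\tau(e)}{\alpha(e)}\,\alpha(exe)$. Because you believed (ii) could fail, you propose instead to reprove the density Lemmas \ref{lemma3.2}--\ref{lemma3.4} without it; this detour is both unnecessary and, as written, only a sketch --- condition (ii) is used in those lemmas not merely ``on bounded sets'' but precisely to convert $\alpha$-convergence of compressions $ex_ne$ into $\|\cdot\|_1$-convergence so that one can pair against elements of $\mathcal{M}$ via the trace (and likewise in the embedding of $L^\alpha(\mathcal{M},\tau)$ into $\widetilde{\mathcal{M}}$), and you do not supply a substitute for those steps. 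Supplying the Dixmier-averaging argument closes the gap and makes the corollary an immediate application of Theorem \ref{theorem3.1}.
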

    }

    Similar to Sager's result in \cite{Sag} for $L^p$ spaces, we prove a Beurling-Chen-Hadwin-Shen theorem for the crossed product of a von Neumann algebra $\mathcal{M}$ by a trace-preserving action $\beta$ with a unitarily invariant, locally $\|\cdot\|_1$-dominating, mutually continuous with respect to the trace $\tau$.

    Sager proved in \cite{Sag} that, \emph{given a von Neumann algebra $\mathcal{M}$ with a semifinite, faithful, normal tracial state $\tau$,
    and a trace-preserving *-automorphism $\beta$ of $\mathcal{M}$, consider the crossed product of $\mathcal{M}$ by the action $\beta$, $\mathcal{M}\rtimes_\beta \mathbb{Z}$, and the extended semifinite, faithful, normal tracial state $\tau$.  Let $H^\infty$ be the weak *-closed non-self-adjoint subalgebra $\mathcal{M}\rtimes_\beta \mathbb{Z}_+$ of $\mathcal{M}\rtimes_\beta\mathbb{Z}$.  Then $H^\infty$ is a seimifinite subdiagonal subalgebra.  Let $0<p<\infty$, and $\mathcal{K}$ be a closed subspace of $L^p(\mathcal{M}\rtimes_\beta \mathbb{Z}, \tau)$ such that $H^\infty \mathcal{K}\subseteq \mathcal{K}$.  Then there exist a projection $q\in\mathcal{M}$ and a family $\{u_\lambda\}$ of partial isometries in in $\mathcal{M}\rtimes_\beta\mathbb{Z}$ which satisfy:
        \begin{enumerate}
            \item[(i)] $u_\lambda =0$ for every $\lambda\in\Lambda$;
            \item[(ii)] $u_\lambda u_\lambda^*\in\mathcal{M}$ and $u_\lambda u_\mu^*=0$ for every $\lambda,\mu\in\Lambda$ where $\lambda\neq\mu$;
            \item[(iii)] $\mathcal{K}=(L^p(\mathcal{M}\rtimes_\beta \mathbb{Z}, \tau)q)\oplus^{row} (\oplus^{row}_{\lambda\in\Lambda} H^p u_\lambda)$.
        \end{enumerate}}

    We are able to prove a similar result, but for any $\alpha$, a unitarily invariant, locally $\|\cdot\|_1$-dominating, mutually continuous norm with respect to $\tau$.

    {\renewcommand{\thetheorem}{\ref{corollary6.5}}
        \begin{corollary}\label{corollary6.5}
            Suppose that $\mathcal{M}$ is a von Neumann algebra with a semifinite, faithful, normal tracial weight $\tau$.  Let $\alpha$ be a unitarily invariant, locally $\|\cdot\|_1$-dominating, mutually continuous norm with respect to $\tau$, and $\beta$ be a trace-preserving, *-automorphism of $\mathcal{M}$.  Consider the crossed product of $\mathcal{M}$ by an action $\beta$, $\mathcal{M}\rtimes_\beta \mathbb{Z}$. Still denote the semifinite, faithful, normal, extended tracial weight on $\mathcal{M}\rtimes_\beta \mathbb{Z}$ by $\tau$.

            Denote by $H^\infty$ the weak *-closed nonself-adjoint subalgebra in $\mathcal{M}\rtimes_\beta \mathbb{Z}$ which is generated by $\{\Lambda(n)\Psi(x) : x\in\mathcal{M}, n\geq 0\}$.  Then $H^\infty$ is a semifinite subdiagonal sublagebra of $\mathcal\rtimes_\beta \mathbb{Z}$.

            Let $\mathcal{K}$ be a closed subspace of $L^\alpha(\mathcal{M}\rtimes_\beta\mathbb{Z},\tau)$ such that $H^\infty \mathcal{K}\subseteq \mathcal{K}$.  Then there exist a projection $q$ in $\mathcal{M}$ and a family $\{u_\lambda\}_{\lambda\in\Lambda}$ of partial isometries in $\mathcal{M}\rtimes_\beta \mathbb{Z}$ which satisfy

            \begin{enumerate}
                \item[(i)] $u_\lambda q=0$ for all $\lambda\in\Lambda$;
                \item[(ii)] $u_\lambda u_{\lambda}^*\in\mathcal{M}$ and $u_\lambda u_{\mu}^*=0$ for all $\lambda,\mu\in\Lambda$ with $\lambda\neq \mu$;
                \item[(iii)] $\mathcal{K}=(L_\alpha(\mathcal{M}\rtimes_\beta \mathbb{Z})q)\otimes^{row}(\otimes^{row}_{\lambda\in\Lambda} H^\alpha u_\lambda)$.
            \end{enumerate}
        \end{corollary}
    }

    As $B(\mathcal{H})$ is a factor and can be realized as the crossed product, we can also weaken the conditions on $\alpha$ when $\mathcal{M}=B(\mathcal{H})$. Additionally, we can fully characterize the $H^\infty$ invariant subspace.

    {\renewcommand{\thetheorem}{\ref{corollary6.6}}
        \begin{corollary}\label{corollary6.6}
            Suppose $\mathcal{H}$ is a separable Hilbert space with an orthonormal base $\{e_m\}_{m\in\mathbb{Z}}$, and let
                $$H^\infty=\{x\in B(\mathcal{H}):\langle xe_m, e_n\rangle=0, \forall n<m\}$$
            be the lower triangular subalgebra of $B(\mathcal{H})$. Then $\mathcal{D}=H^\infty \cap (H^\infty)^*$ is the diagonal subalgebra of $B(\mathcal{H})$.

            Suppose $\alpha:\mathcal{I}\rightarrow[0,\infty)$, where $\mathcal{I}$ is the set of elementary operators in $\mathcal{M}$, is an unitarily invariant norm such that any net $\{e_\lambda\}$ in $\mathcal{M}$ with $e_\lambda \uparrow I$ in the weak* topology implies that $\alpha((e_\lambda - I)x)\rightarrow 0$.

            Assume that $\mathcal{K}$ is a closed subspace of $H^\alpha$ such that $H^\infty\mathcal{K}\subseteq \mathcal{K}$.  Then there exists $\{u_\lambda\}_{\lambda\in\Lambda}$, a family of partial isometries in $H^\infty$ which satisfy
            \begin{enumerate}
                \item[(i)] $u_\lambda u_\lambda^*\in\mathcal{D}$ and $u_\lambda u_\mu^*=0$ for every $\lambda, \mu \in \Lambda$ such that $\lambda\neq\mu$;
                \item[(ii)] $\mathcal{K}=\oplus^{row}_{\lambda\in\Lambda} H^\alpha u_\lambda$.
            \end{enumerate}
        \end{corollary}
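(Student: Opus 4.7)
The plan is to invoke Corollary \ref{corollary6.2} to obtain a Beurling-type decomposition of $\mathcal{K}$, and then use the extra hypothesis $\mathcal{K}\subseteq H^\alpha$ to eliminate the non-cyclic piece. Since $B(\mathcal{H})$ is a factor equipped with the canonical semifinite faithful normal trace, since $H^\infty$ (lower triangular operators) is a semifinite subdiagonal subalgebra with diagonal $\mathcal{D}=H^\infty\cap(H^\infty)^*$, and since the hypothesis on $\alpha$ matches precisely that of Corollary \ref{corollary6.2}, that corollary applies to $\mathcal{K}\subseteq L^\alpha(B(\mathcal{H}),\tau)$ and produces a closed subspace $Y$ and a family of partial isometries $\{u_\lambda\}_{\lambda\in\Lambda}\subseteq B(\mathcal{H})$ with $u_\lambda Y^*=0$, $u_\lambda u_\lambda^*\in\mathcal{D}$, $u_\lambda u_\mu^*=0$ for $\lambda\neq\mu$, $Y=[H^\infty_0 Y]_\alpha$, and
$$\mathcal{K}=Y\oplus^{row}\Bigl(\oplus^{row}_{\lambda\in\Lambda} H^\alpha u_\lambda\Bigr).$$
What remains is to show $Y=0$ and that each $u_\lambda$ actually lies in $H^\infty$.

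To kill $Y$, let $U$ be the bilateral shift $Ue_m=e_{m+1}$, a unitary in $H^\infty_0$. A direct matrix check yields the algebraic identity $H^\infty_0=U\cdot H^\infty$: multiplication by $U$ (respectively $U^*$) shifts every row down (respectively up) by one, so strictly lower triangular operators are precisely $U$ times lower triangular operators. The subspace $Y$ is itself $H^\infty$-invariant: for $h\in H^\infty$ and $y\in Y$, the decomposition of $hy\in\mathcal{K}$ as $y'+\sum h_\lambda u_\lambda$ forces $h_\lambda u_\lambda=0$ upon right-multiplying by $u_\mu^*$ and using $yu_\mu^*=0$. Hence, by unitary invariance of $\alpha$,
$$Y=[H^\infty_0 Y]_\alpha=[UH^\infty Y]_\alpha\subseteq[UY]_\alpha=U\overline{Y}^\alpha=UY,$$
and iterating, $Y\subseteq U^n Y\subseteq U^n H^\alpha$ for every $n\geq 0$. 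Since $(U^n h)_{ij}=h_{i-n,j}=0$ whenever $j>i-n$, every $y\in\bigcap_n U^n H^\alpha$ has each matrix entry $\langle ye_j,e_i\rangle$ equal to $0$; as the rank-one coefficient functionals $y\mapsto\tau(E_{ij}y)$ are $\alpha$-continuous (by unitary invariance and finite rank) and separate points of $L^\alpha$, we conclude $Y=0$.

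To verify $u_\lambda\in H^\infty$, fix $m\in\mathbb{Z}$ and let $E_m=|e_m\rangle\langle e_m|$ be the rank-one diagonal projection. Then $E_m\in\mathcal{D}\cap\mathcal{I}\subseteq H^\infty\cap L^\alpha\subseteq H^\alpha$, so $E_m u_\lambda\in H^\alpha u_\lambda\subseteq\mathcal{K}\subseteq H^\alpha$. Since $\alpha$-continuity of matrix entries makes every element of $H^\alpha=[H^\infty\cap L^\alpha]_\alpha$ lower triangular, and $E_m u_\lambda$ realizes the $m$-th row of $u_\lambda$, we obtain $\langle u_\lambda e_j,e_m\rangle=0$ for all $j>m$. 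Letting $m$ range over $\mathbb{Z}$, $u_\lambda$ is lower triangular, i.e., $u_\lambda\in H^\infty$. Combined with $Y=0$, this yields the desired decomposition $\mathcal{K}=\oplus^{row}_{\lambda\in\Lambda} H^\alpha u_\lambda$ with partial isometries in $H^\infty$.

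The principal obstacle is step two---verifying $\bigcap_n U^n H^\alpha=\{0\}$. The algebraic factoring $H^\infty_0=UH^\infty$ and the iteration $Y\subseteq U^n H^\alpha$ are routine, but the concluding step from vanishing of all matrix entries to $y=0$ requires that the finite-rank matrix-coefficient functionals be $\alpha$-continuous and separating on $L^\alpha$; this must be read off from the unitary invariance of $\alpha$ together with the density of the finite-rank ideal $\mathcal{I}$ in $L^\alpha$.
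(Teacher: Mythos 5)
Your proposal is correct, and it reaches the conclusion by a route that differs in its middle step from the paper's. The paper obtains Corollary \ref{corollary6.6} as a consequence of the crossed-product result (Corollary \ref{corollary6.5}, via the realization $B(\mathcal{H})\simeq l^\infty(\mathbb{Z})\rtimes_\beta\mathbb{Z}$ of Example \ref{example6.3}) together with Proposition \ref{prop3.5}: there, the identity $H^\infty_0=\Lambda(1)H^\infty$ of Lemma \ref{lemma6.4} is used to show that $Y$ is invariant under the whole von Neumann algebra, Corollary \ref{corollary5.5} then gives $Y=L^\alpha(B(\mathcal{H}))q$ for a projection $q$ (this is the intermediate Corollary \ref{corollary6.5.5}), and finally the hypothesis $\mathcal{K}\subseteq H^\alpha$ forces $q=0$. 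You instead apply the factor version (Corollary \ref{corollary6.2}) directly and kill $Y$ by the nested-intersection argument $Y\subseteq\bigcap_n U^nH^\alpha=\{0\}$; the engine is the same identity $H^\infty_0=UH^\infty$, but you exploit $\mathcal{K}\subseteq H^\alpha$ at the level of $Y$ itself rather than after reducing to a projection, which lets you bypass Corollary \ref{corollary5.5} and the crossed-product formalism entirely. Your explicit verification that each $u_\lambda$ is lower triangular (via $E_mu_\lambda\in H^\alpha u_\lambda\subseteq H^\alpha$ and $\alpha$-continuity of matrix coefficients) supplies a detail the paper leaves implicit. Two small points deserve care: the $\alpha$-continuity of the functionals $y\mapsto\tau(E_{ij}y)$ follows most directly from the locally $\|\cdot\|_1$-dominating property (available here by Proposition \ref{prop3.5} since $B(\mathcal{H})$ is a factor), not merely from unitary invariance; and your argument that $Y$ is $H^\infty$-invariant by right-multiplying by $u_\mu^*$ should be phrased through the $\alpha$-continuous idempotent given by right multiplication by the projection $\sum_\mu u_\mu^*u_\mu$, since elements of $\mathcal{K}$ are only $\alpha$-limits of finite sums $y+\sum h_\lambda u_\lambda$. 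Neither point is a gap, only a matter of making the justification rest on the right lemmas.
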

    }

    Additionally, we prove a result for a Banach function space $E$ with norm $\|\cdot\|_{E(\tau)}$ and provide an answer for Problem \ref{question1.1}.

    {\renewcommand{\thetheorem}{\ref{corollary6.1}}
        \begin{corollary}\label{corollary6.1}
        Suppose that $\mathcal I(\tau)$ is a Banach function space on the diffuse von Neumann algebra $\mathcal{M}$ with order continuous norm $\|\cdot\|_{\mathcal{I}(\tau)}$.
        Let $\mathcal{D}=H^\infty\cap (H^\infty)^*$.  Assume that $\mathcal{K}$ is a closed subspace of $\mathcal I(\tau)$ such
        that $H^\infty \mathcal{K}\subseteq \mathcal{K}$.  Then, there exist a closed subspace $Y$ of $\mathcal I(\tau)$ and a family $\{u_\lambda\}$ of partial isometries in $\mathcal{M}$ such that
            \begin{enumerate}
                \item[(i)] $u_\lambda Y^*=0$ for every $\lambda\in\Lambda$;\
                \item[(ii)] $u_\lambda u_\lambda^*\in\mathcal{D}$, and $u_\lambda u_\mu^*$ for every $\lambda, \mu\in \Lambda$ with $\lambda\neq\mu$;
                \item[(iii)] $Y=[H^\infty_0 Y]_\alpha$;
                \item[(iv)] $\mathcal{K}=Y\oplus^{row}(\oplus^{row}_{\lambda\in\Lambda} H^{\mathcal{I}(\tau)} u_\lambda)$.
            \end{enumerate}
    \end{corollary}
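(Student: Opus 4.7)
The strategy is to deduce Corollary~\ref{corollary6.1} as a direct specialization of Theorem~\ref{theorem3.1}. By construction, $\mathcal{I}(\tau)$ is the completion of the finite-rank operators $\mathcal{I}$ under $\|\cdot\|_{\mathcal{I}(\tau)}$, so if we set $\alpha:=\|\cdot\|_{\mathcal{I}(\tau)}$ then $L^\alpha(\mathcal{M},\tau) = \mathcal{I}(\tau)$ and $H^\alpha = H^{\mathcal{I}(\tau)}$. Hence the substantive work is to verify that $\alpha$ lies in the class of unitarily invariant, locally $\|\cdot\|_1$-dominating, mutually continuous norms with respect to $\tau$; once that is done, applying Theorem~\ref{theorem3.1} to the given $H^\infty$-invariant closed subspace $\mathcal{K}\subseteq \mathcal{I}(\tau)$ immediately produces the closed subspace $Y$ and the family of partial isometries $\{u_\lambda\}$ satisfying (i)--(iv).

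For condition (i), the generalized singular value function satisfies $\mu(uxv)=\mu(x)$ for any unitaries $u,v\in\mathcal{M}$ and $x\in\mathcal{I}$, and since $\|\cdot\|_{\mathcal{I}(\tau)}$ depends only on $\mu(\cdot)$, unitary invariance is immediate. For condition (ii), fix a projection $e\in\mathcal{M}$ with $\tau(e)<\infty$ and set $y=exe$; because the support of $y$ is dominated by $e$, we have $\mu(y)=\mu(y)\chi_{[0,\tau(e))}$. A K\"othe duality pairing against the characteristic function $\chi_{[0,\tau(e))}$, which lies in the K\"othe dual $E'$ by the finite subset property of the underlying measure space, produces a constant $c(e)$ depending only on the dual norm of $\chi_{[0,\tau(e))}$ and yields the required local comparison between $\alpha(y)$ and $\|y\|_1$.

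For condition (iii)(a), the hypothesis of order continuity of $\|\cdot\|_{\mathcal{I}(\tau)}$ is decisive: if $\{e_\lambda\}$ is an increasing net of projections in $\mathcal{I}$ with $\|e_\lambda x - x\|_1 \to 0$ for every $x\in\mathcal{I}$, then $\mu(e_\lambda x - x)\downarrow 0$ almost everywhere and order continuity of $\|\cdot\|_E$ forces $\alpha(e_\lambda x - x)=\|\mu(e_\lambda x - x)\|_E\to 0$. For (iii)(b), note that $\mu(e_\lambda)=\chi_{[0,\tau(e_\lambda))}$, so $\alpha(e_\lambda)\to 0$ amounts to $\|\chi_{[0,\tau(e_\lambda))}\|_E\to 0$; the standard fact in Banach function spaces that $\|\chi_{[0,t)}\|_E \to 0$ implies $t\to 0$, a consequence of the lattice structure together with the finite subset property, then yields $\tau(e_\lambda)\to 0$.

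Having verified (i)--(iii), Theorem~\ref{theorem3.1} applied with $\alpha=\|\cdot\|_{\mathcal{I}(\tau)}$ delivers exactly the stated decomposition. The main technical obstacle is the verification of (iii)(a): converting $\|\cdot\|_1$-convergence into $\alpha$-convergence requires a careful use of order continuity of the norm together with the diffuseness hypothesis on $\mathcal{M}$ to secure the necessary pointwise control of the singular value functions $\mu(e_\lambda x - x)$.
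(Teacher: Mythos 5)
Your overall route is the same as the paper's: the paper derives Corollary \ref{corollary6.1} in one line by combining Theorem \ref{theorem3.1} with Proposition \ref{prop3.3}, the latter being exactly the verification that $\|\cdot\|_{\mathcal{I}(\tau)}$ is a unitarily invariant, locally $\|\cdot\|_1$-dominating, mutually continuous norm. Where you genuinely diverge is inside that verification. For the local $\|\cdot\|_1$-domination you use K\"othe duality, pairing $\mu(exe)=\mu(exe)\chi_{[0,\tau(e))}$ against $\chi_{[0,\tau(e))}\in E'$, which gives $\|exe\|_1\le\|\chi_{[0,\tau(e))}\|_{E'}\,\alpha(exe)$ directly and does not use diffuseness at all; the paper instead approximates $exe$ by linear combinations of projections, partitions them into equal-trace subprojections (this is where diffuseness enters), and invokes the averaging estimate of Lemma \ref{lemma3.7}. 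Your argument is shorter and more general, but it rests on the membership $\chi_{[0,\tau(e))}\in E'$ with finite norm, which is \emph{not} a consequence of the finite subset property as you claim; it is the standard continuous embedding $E\subseteq L^1+L^\infty$ valid for symmetric Banach function spaces on $(0,\infty)$ (Krein--Petunin--Semenov / Bennett--Sharpley), and you should cite that instead. Your argument for (iii)(b) via monotonicity of $t\mapsto\|\chi_{[0,t)}\|_E$ is also cleaner than the paper's reduction to a decreasing sequence of projections and an infimum argument.

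Two smaller points to tighten. First, in (iii)(a) the assertion that $\mu(e_\lambda x-x)\downarrow 0$ pointwise is not automatic from $\|\cdot\|_1$-convergence; the monotone decrease comes from noting that $|(I-e_\lambda)x|^2=x^*(I-e_\lambda)x$ is a decreasing net of positive operators because $\{e_\lambda\}$ is increasing, so that $\mu(\cdot\,;(I-e_\lambda)x)$ decreases pointwise, and convergence in measure then forces the limit to be $0$ a.e.; this is essentially the paper's argument, which applies order continuity directly to $\sqrt{x^*(I-e_\lambda)x}\downarrow 0$. Second, condition (3a) of Definition \ref{def3.1} is stated with $\tau(e_\lambda x-x)\to 0$ (equivalently $e_\lambda\to I$ weak*) rather than $\|e_\lambda x-x\|_1\to 0$; this does not affect your argument since the increasing net structure is what you actually use, but the hypothesis should be quoted correctly. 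With those repairs the proposal is correct and delivers the corollary exactly as the paper does.
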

    }

    We begin in section 2 by discussing the background definitions and preliminary results.  In section 3, we define the class of unitarily invariant, $\|\cdot\|_1$-dominating, mutually continuous norms, which we call the class of $\alpha$-norms.  We discuss the non-commutative Banach function space setting and other applications of $\alpha$-norms.  In section 4, we dicuss Arveson's non-commutative Hardy space.  We prove our main result, a Beurling-Chen-Hadwin-Shen Theorem for $\alpha$-norms, in section 5.  We finally apply our main result to our examples and crossed products in section 6.

\section{Preliminaries and Notation}

In the following section, we give some useful and necessary defintions and results  for a von Neumann algebra with a faithful, normal, semifinite tracial weight.  We also discuss the space of operators affiliated with a von Neumann algebra with a faithful, normal, semifinite tracial weight.

\subsection{Weak* Topology}

    Let $\mathcal{M}$ be a von Neumann algebra with a predual $\mathcal{M}_\#$.  We recall that the weak* topology on $\mathcal{M}$, $\sigma(\mathcal{M},\mathcal{M}_\#)$, is the topology on $\mathcal{M}$ induced by the predual space $\mathcal{M}_\#$.  The following result on weak* topology convergence is useful (see, for instance, Theorem 1.7.8 in \cite{Sakai}).

    \begin{lemma} \label{lemma2.0}
        Let $\mathcal{M}$ be a von Neumann algebra.  If $\{e_\lambda\}_{\lambda\in\Lambda}$ is a net of projections in $\mathcal{M}$ converging to $I$ in the weak* topology, then $e_\lambda x$, $xe_\lambda$, and $e_\lambda x e_\lambda$ converge to $x$ in the weak* topology for all $x$ in $\mathcal{M}$.
    \end{lemma}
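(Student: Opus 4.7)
The plan is to reduce the statement to verifying $\omega(e_\lambda x) \to \omega(x)$, $\omega(xe_\lambda) \to \omega(x)$, and $\omega(e_\lambda x e_\lambda) \to \omega(x)$ for each $x\in \mathcal{M}$ and each positive normal functional $\omega \in \mathcal{M}_\#$. This reduction is legitimate since every normal functional is a (finite) linear combination of positive normal ones by the Jordan decomposition of normal functionals. Setting $f_\lambda := I - e_\lambda$, positivity of $\omega$ and the hypothesis $e_\lambda \to I$ weak* give $\omega(f_\lambda) = \omega(I)-\omega(e_\lambda) \to 0$; moreover each $f_\lambda$ is still a projection, so $f_\lambda^2 = f_\lambda$.

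The main tool is the Cauchy--Schwarz inequality for the positive semidefinite sesquilinear form $(a,b) \mapsto \omega(b^*a)$ on $\mathcal{M}$, namely $|\omega(b^*a)|^2 \le \omega(b^*b)\,\omega(a^*a)$. Applying this with $a=x$ and $b = f_\lambda$ yields
$$|\omega(f_\lambda x)|^2 \;\le\; \omega(f_\lambda^2)\,\omega(x^*x) \;=\; \omega(f_\lambda)\,\omega(x^*x) \;\longrightarrow\; 0,$$
so $e_\lambda x \to x$ weak*. Applying the same inequality symmetrically with $a = f_\lambda$ and $b = x^*$ gives $|\omega(xf_\lambda)|^2 \le \omega(xx^*)\,\omega(f_\lambda) \to 0$, hence $xe_\lambda \to x$ weak*.

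For the two-sided compression, use the algebraic identity $x - e_\lambda x e_\lambda = f_\lambda x + e_\lambda x f_\lambda$. The first summand tends to $0$ weak* by what was just shown. For the second, Cauchy--Schwarz once more gives
$$|\omega(e_\lambda x f_\lambda)|^2 \;\le\; \omega(e_\lambda xx^* e_\lambda)\,\omega(f_\lambda^2) \;\le\; \|x\|^2\,\omega(I)\,\omega(f_\lambda) \;\longrightarrow\; 0,$$
completing the proof. There is no serious obstacle here; this is essentially a GNS/Cauchy--Schwarz exercise. The only point requiring care is the appeal to the Jordan decomposition for normal functionals in the reduction step, which should be cited from a standard reference such as Takesaki or Sakai.
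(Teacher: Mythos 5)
Your proof is correct. The paper does not actually prove this lemma; it simply cites Theorem 1.7.8 of Sakai's \emph{C$^*$-algebras and W$^*$-algebras}, so there is no in-paper argument to compare against. Your self-contained argument is the standard one and is sound: the reduction to positive normal functionals via the Jordan decomposition is legitimate (every normal functional is a linear combination of four positive normal ones), the Cauchy--Schwarz estimates $|\omega(f_\lambda x)|^2\le\omega(f_\lambda)\,\omega(x^*x)$ and $|\omega(xf_\lambda)|^2\le\omega(xx^*)\,\omega(f_\lambda)$ are applied correctly using $f_\lambda^*f_\lambda=f_\lambda$, and the identity $x-e_\lambda xe_\lambda=f_\lambda x+e_\lambda xf_\lambda$ together with $e_\lambda xx^*e_\lambda\le\|x\|^2 I$ handles the two-sided compression. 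Note also that your argument nowhere uses monotonicity of the net, which matches the lemma as stated. The only cosmetic point: where you write ``$|\omega(f_\lambda x)|^2$'' you are implicitly using that $\omega(x-e_\lambda x)=\omega(f_\lambda x)$, which is of course immediate; and the boundedness $\omega(I)<\infty$ of a positive normal functional is what makes the final estimate finite. No gaps.
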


\subsection{Semifinite von Neumann Algebras}
    Let $\mathcal{M}$ be a von Neumann algebra.  We let $\mathcal{M}^+$ be the positive part of $\mathcal{M}$.  Recall the defintion of a tracial weight $\tau$ on $\mathcal{M}$:
    A mapping $\tau:\mathcal{M}^+\rightarrow[0,\infty]$ is a \emph{tracial weight} on $\mathcal{M}$ if
        \begin{enumerate}
            \item $\tau(x+y)=\tau(x)+\tau(y)$ for $x,y \in \mathcal{M}^+$;
            \item $\tau(ax)=a\tau(x)$ for every $x\in \mathcal{M}^+$ and $a\in[0,\infty]$; and
            \item $\tau(xx^*)=\tau(x^* x)$ for every $x\in\mathcal{M}$.
        \end{enumerate}
    Such a $\tau$ is called \emph{normal} if it is weak* topology continuous; \emph{faithful} if, given $a\in\mathcal{M}^+$, $\tau(a^* a)=0$ implies that $a=0$; \emph{finite} if $\tau(I)<\infty$; and \emph{semifinite} if for any nonzero $x\in\mathcal{M}^+$, there exists a nonzero $y\in\mathcal{M}^+$ such that $\tau(y)<\infty$, and $y\leq x$.  A von Neumann algebra $\mathcal{M}$ for which a faithful, normal, semifinite tracial weight $\tau$ exists is called \emph{semifinite}.

\subsection{Operators affiliated with $\mathcal{M}$}
    Given a von Neumann algebra $\mathcal{M}$ with a semifinite, faithful, normal tracial weight $\tau$ acting on a Hilbert space $\mathcal{H}$, a \emph{measure topology} on $\mathcal{M}$ is given by the system of neighborhoods $U_{\delta,\epsilon}=\{a\in\mathcal{M}: \|ap\|\leq\epsilon \text{ and } \tau(p^{\perp})\leq\delta \text{ for some projection }p\in\mathcal{M}\}$ for any $\epsilon,\delta>0$ (for more details see \cite{Nelson}).  We say that $a_n$ is \emph{Cauchy in measure} if, given $\epsilon$ and $\delta>0$, there exists an $n_0$ such that if $n,m\geq n_0$, then $a_n-a_m$ is in $U_{\delta,\epsilon}$.

    \begin{definition}
    Let $\widetilde{\mathcal{M}}$ denote the algebra of closed, densely defined (possibly unbounded) operators on $\mathcal{H}$ affiliated with $\mathcal{M}$.
    \end{definition}

    \begin{remark}$\widetilde{\mathcal{M}}$ is also the closure of $\mathcal{M}$ in the measure topology (see \cite{Nelson} for more information).\
    \end{remark}

\section{Unitarily invariant norms and examples}

    In this section, we introduce a class of unitarily invariant, locally $\|\cdot\|_1$-dominating, mutually continuous norms on semifinite von Neumann algebras.  We also introduce interesting examples from this class.

\subsection{$L^\alpha$ spaces of semifinite von Neumann algebras}
    Suppose that $\mathcal{M}$ is a von Neumann algebra with a semifinite, faithful, normal tracial state $\tau$.
     We then let 
        $$\mathcal{I}=span\{xey: x,y\in\mathcal{M}, e\in\mathcal{M}, e=e^2=e^* \text{ with }\tau(e)<\infty\}$$ 
    be the set of elementary operators of $(\mathcal{M},\tau)$ (see Remark 2.3 in \cite{Se}).  For each $1\le p<\infty$ , we define  the $\|\cdot\|_p$-norm on $\mathcal{I}$ by $$\|x\|_p=(\tau(|x|^p))^{1/p}  \qquad \text{ for every $x\in\mathcal{I}$}.$$
It is a non-trivial fact that the mapping $\|\cdot\|_p$ defines a norm on $\mathcal I$.     We let $L^p(\mathcal M,\tau)$ denote the completion of $\mathcal I$ with respect to the $\|\cdot\|_p$-norm.
    \begin{definition} \label{def3.1}
    We call a norm $\alpha:\mathcal{I}\rightarrow [0,\infty)$ a \emph{unitarily invariant, locally $\|\cdot\|_1$-dominating, mutually continuous norm with respect to $\tau$} on $\mathcal{I}$ if it satisfies the following characteristics:
    \begin{enumerate}
        \item 
        $\alpha$ is \emph{unitarily invariant} if for all unitaries $u,v$  in $\mathcal{M}$ and every $x$ in $\mathcal{I}$, $\alpha(uxv)=\alpha(x)$;
        \item $\alpha$ is \emph{locally $\|\cdot\|_1$-dominating} if for every projection $e$ in $\mathcal{M}$ with $\tau(e)<\infty$, there exists $0<c(e)<\infty$ such that $\alpha(exe)\geq c(e)\|exe\|_1$ for  every $x\in\mathcal{I}$;
        \item $\alpha$ is \emph{mutually continuous with respect to} $\tau$; namely
            \begin{enumerate}
            \item[(a)] If $\{e_\lambda\}$ is an increasing net of projections in $\mathcal{I}$ such that  $\tau(e_\lambda x-x)\rightarrow 0$ for every $x\in \mathcal{I}$, then   $\alpha(e_\lambda x-x)\rightarrow 0$ for every $x\in \mathcal{I}$.  Or, equivalently, if $\{e_\lambda\}$ is a net of projections in $\mathcal{I}$ such that $e_\lambda\rightarrow I$ in the weak* topology, then $\alpha(e_\lambda x-x)\rightarrow 0$ for every $x\in \mathcal{I}$.
            \item[(b)] If $\{e_\lambda\}$ is a net of projections in $\mathcal{I}$ such that $\alpha(e_\lambda)\rightarrow 0$, then $\tau(e_\lambda)\rightarrow 0$.
            \end{enumerate}
    \end{enumerate}

    \end{definition}

    \begin{definition}\label{def3.2}
        Let $\mathcal{M}$ be a von Neumann algebra with a semifinite, faithful, normal tracial weight $\tau$.  Suppose $\mathcal{I}=span\{\mathcal{M}e\mathcal{M}: e=e^2=e^* \in \mathcal M \text{ such that } \tau(e)<\infty\}$ is the set of all elementary operators in $\mathcal{M}$.  Suppose $\alpha$ is a unitarily invariant, locally $\|\cdot\|_1$-dominating, mutually continuous norm with respect to $\tau$ on $\mathcal{I}$.  We define $L^\alpha(\mathcal{M},\tau)$ to be the completion of $\mathcal{I}$ under $\alpha$, namely,
        $$L^\alpha(\mathcal{M},\tau)=\overline{\mathcal{I}}^\alpha.$$
    \end{definition}

    \begin{notation}
    We will denote by $[S]_\alpha$ the completion, with respect to the norm $\alpha$,  of a set $S$ in $\mathcal{M}$.
    \end{notation}


    \begin{lemma}\label{lemma3.4.3}
        Suppose $\mathcal{M}$ is a von Neumann algebra with a semifinite, faithful, normal tracial weight $\tau$, and let $\alpha$ be a unitarily invariant, locally $\|\cdot\|_1$-dominating, mutually continuous norm with respect to $\tau$.  Then for any $x\in L^\alpha(\mathcal{M},\tau)$, and $a,b\in\mathcal{M}$,
            $$\alpha(axb)\leq \|a\|\alpha(x)\|b\|.$$
    \end{lemma}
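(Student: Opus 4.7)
The plan is to establish the inequality first for $x\in\mathcal{I}$ (the elementary operators) using unitary invariance and a Russo--Dye-type averaging, and then extend to $L^\alpha(\mathcal{M},\tau)$ by density. The key input is the Kadison--Pedersen--Haagerup refinement of Russo--Dye: in any unital C*-algebra, every element $c$ with $\|c\|<1$ can be written \emph{exactly} as an average $c=\tfrac{1}{n}(w_1+\cdots+w_n)$ of finitely many unitaries $w_i$. Without this exact form, the approximate Russo--Dye would force a circular appeal to the inequality we are trying to prove.

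First I would reduce to $\|a\|,\|b\|<1$: the case where either is zero is trivial, and in general we can apply the bound with $a/(\|a\|+\varepsilon)$ and $b/(\|b\|+\varepsilon)$ and let $\varepsilon\to 0^{+}$. Assuming then that $\|a\|,\|b\|<1$ and $x\in\mathcal{I}$, write
\[
a=\frac{1}{n}\sum_{i=1}^{n}u_i,\qquad b=\frac{1}{m}\sum_{j=1}^{m}v_j,
\]
with $u_i,v_j$ unitaries of $\mathcal{M}$. Since $\mathcal{I}$ is a two-sided ideal of $\mathcal{M}$, each $u_i x v_j$ lies in $\mathcal{I}$, so $\alpha$ is defined on it. Using the triangle inequality for $\alpha$ and then the unitary invariance (i) from Definition~\ref{def3.1}, we get
\[
\alpha(axb)=\alpha\!\left(\frac{1}{nm}\sum_{i,j}u_i x v_j\right)\leq \frac{1}{nm}\sum_{i,j}\alpha(u_i x v_j)=\alpha(x).
\]
Homogenizing by replacing $a,b$ with $a/(\|a\|+\varepsilon)$, $b/(\|b\|+\varepsilon)$ and sending $\varepsilon\to 0$ yields $\alpha(axb)\leq\|a\|\,\|b\|\,\alpha(x)$ for every $x\in\mathcal{I}$ and every $a,b\in\mathcal{M}$.

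Finally, I would extend to $L^\alpha(\mathcal{M},\tau)$ by density. Given $x\in L^\alpha(\mathcal{M},\tau)$, pick $x_k\in\mathcal{I}$ with $\alpha(x_k-x)\to 0$. The bound on $\mathcal{I}$ gives
\[
\alpha(ax_k b-ax_\ell b)=\alpha\bigl(a(x_k-x_\ell)b\bigr)\leq\|a\|\,\|b\|\,\alpha(x_k-x_\ell),
\]
so $\{ax_k b\}$ is Cauchy in $L^\alpha(\mathcal{M},\tau)$; its limit is by definition $axb$, and continuity of $\alpha$ together with $\alpha(ax_k b)\leq\|a\|\,\|b\|\,\alpha(x_k)$ passes to the limit to give the desired inequality. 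The main subtlety is purely preparatory, namely ensuring the finite-mean representation of contractions by unitaries is available inside $\mathcal{M}$; once invoked, the argument is routine.
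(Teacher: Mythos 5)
Your proposal is correct and follows essentially the same route as the paper: reduce to $x\in\mathcal{I}$, write the contraction(s) as an exact finite average of unitaries via the Russo--Dye (Kadison--Pedersen) theorem, and conclude by the triangle inequality and unitary invariance, with the extension to $L^\alpha(\mathcal{M},\tau)$ by density. The only differences are cosmetic: the paper treats left and right multiplication separately rather than with a double average, and it leaves the homogenization and the density passage implicit where you spell them out.
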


    \begin{proof}
        The proof is included here for completeness. It suffices to show that   for any $x\in  \mathcal{I} $, and $a,b\in\mathcal{M}$,
            $$\alpha(axb)\leq \|a\|\alpha(x)\|b\|.$$

    Without loss of generality, we might assume that $\|a\|<1$. By the Russo-Dye Theorem, there exist a positive integer $n$ and unitary elements $u_1,\ldots, u_n$ in $\mathcal M$ such that $a=(u_1+\cdots +u_n)/n$.
        Therefore,
        $$\alpha(ax)=\alpha((u_1+\cdots +u_n)x)/n\le \alpha(x)$$
        since $\alpha$ is unitarily invariant.
        So, $\alpha(ax)\leq \|a\|\alpha(x)$ for every $a\in\mathcal{M}$.

        It may be proved similarly that $\alpha(xb)\leq \alpha(x)\|b\|$ for every $b\in\mathcal{M}$.
    \end{proof}


%

\subsection{Examples of unitarily-invariant, locally $\|\cdot\|_1$-
dominating, mutually continuous norms}


    \begin{remark}
        It is trivial to show that the $\|\cdot\|_p$-norms of $ \mathcal{M} $ with $1\leq p< \infty$ for a semifinite von Neumann algebra $\mathcal{M}$ with a faithful, normal, semifinite tracial weight $\tau$ are unitarily equivalent, $\|\cdot\|_1$-dominating, mutually continuous norms with respect to $\tau$ on $\mathcal{M}$.
    \end{remark}

    \begin{remark}
        It is also trivial to show that a continuous, unitarily invariant, normalized, $\|\cdot\|_1$-dominating norm on a finite von Neumann algebra $\mathcal{M}$ as given in   \cite{CHS}  is a unitarily invariant, locally $\|\cdot\|_1$-dominating, mutually continuous norm with respect to $\tau$ on $\mathcal{M}$.
    \end{remark}

    \begin{proposition} \label{prop3.5}
    Suppose that $\mathcal{M}$ is a semifinite factor, and $\alpha:\mathcal{I}\rightarrow[0,\infty)$ is a unitarily invariant norm satisfying that, if $\{e_\lambda\}$ is a net in $\mathcal{M}$ with $e_\lambda \rightarrow I$ in the weak* topology, then $\alpha(e_\lambda x-x)\rightarrow 0$ for each $x\in\mathcal I$.  Then $\alpha$ is a unitarily invariant, locally $\|\cdot\|_1$-dominating, mutually continuous norm with respect to $\tau$.
    \end{proposition}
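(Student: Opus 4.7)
The plan is to verify the three defining properties of Definition~\ref{def3.1} for this $\alpha$. Condition~(1) is part of the hypothesis, and condition~(3)(a) follows directly from the hypothesis: the stated convergence property for any net $e_\lambda\to I$ weak* in $\mathcal M$ specializes immediately to the equivalent formulation of (3)(a) for projections in $\mathcal I$. The workhorse for both remaining verifications is a preliminary observation drawn from Lemma~\ref{lemma3.4.3}: for any projection $e$ in $\mathcal M$ with $\tau(e)<\infty$ and any $a\in e\mathcal Me$,
\[
\alpha(a)=\alpha(a\cdot e)\le\|a\|_\infty\,\alpha(e),
\]
so $\alpha$ is operator-norm continuous on the finite corner $e\mathcal Me$, and in particular $\alpha(p)\le\alpha(e)$ whenever $p$ is a projection with $p\le e$.

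For condition~(2), fix $e$ with $\tau(e)<\infty$. Since $\mathcal M$ is a factor, $e\mathcal Me$ is a finite factor with identity $e$. For a positive $y\in e\mathcal Me$, Dixmier's averaging theorem inside $e\mathcal Me$ produces convex combinations
\[
z_n=\sum_i\lambda_i^{(n)}\,u_i^{(n)}\,y\,\bigl(u_i^{(n)}\bigr)^{*},\qquad u_i^{(n)}\in\mathcal U(e\mathcal Me),
\]
with $\bigl\|z_n-\tfrac{\tau(y)}{\tau(e)}\,e\bigr\|_\infty\to 0$. Each $u_i^{(n)}$ extends to the unitary $u_i^{(n)}+(I-e)\in\mathcal U(\mathcal M)$, which (because $y=eye$) conjugates $y$ to $u_i^{(n)}y(u_i^{(n)})^{*}$, so the hypothesis and the triangle inequality give $\alpha(z_n)\le\alpha(y)$. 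The preliminary continuity yields $\alpha\bigl(z_n-\tfrac{\tau(y)}{\tau(e)}e\bigr)\to 0$; passing to the limit gives $\alpha(y)\ge\tfrac{\alpha(e)}{\tau(e)}\,\tau(y)=\tfrac{\alpha(e)}{\tau(e)}\,\|y\|_1$. For general $x\in\mathcal I$, applying this to the positive element $|exe|\in e\mathcal Me$ and using the polar decomposition of $exe$ together with Lemma~\ref{lemma3.4.3} to see $\alpha(exe)=\alpha(|exe|)$, one obtains $\alpha(exe)\ge\tfrac{\alpha(e)}{\tau(e)}\|exe\|_1$, so $c(e):=\alpha(e)/\tau(e)$ does the job.

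For condition~(3)(b), suppose $\{e_\lambda\}\subseteq\mathcal I$ is a net of projections with $\alpha(e_\lambda)\to 0$, and assume, toward a contradiction, that $\tau(e_\lambda)\not\to 0$. After passing to a subnet, $\tau(e_\lambda)\ge\delta>0$ for all $\lambda$. Since $\mathcal M$ is a semifinite factor, it is of type~I or~II, so we may pick some $0<\delta'\le\delta$ in the range of $\tau$ and subprojections $p_\lambda\le e_\lambda$ with $\tau(p_\lambda)=\delta'$. In a factor, any two projections of equal finite trace are unitarily equivalent via a unitary of $\mathcal M$, so $p_\lambda=U_\lambda p_0 U_\lambda^{*}$ for a fixed $p_0$ with $\tau(p_0)=\delta'$ and $U_\lambda\in\mathcal U(\mathcal M)$. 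Unitary invariance gives $\alpha(p_\lambda)=\alpha(p_0)>0$, while the preliminary observation gives $\alpha(p_\lambda)\le\alpha(e_\lambda)\to 0$, a contradiction.

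The main obstacle is condition~(2): in the absence of any a priori monotonicity of $\alpha$ in the operator order, one cannot simply bootstrap a lower bound on a fixed projection to a bound on $\alpha(y)$ for a general positive $y$. The resolution is to isolate $\tau$ via Dixmier averaging inside the finite corner $e\mathcal Me$, whose centre is $\mathbb C e$, and then to use Lemma~\ref{lemma3.4.3} to convert the operator-norm approximation of $y$'s conjugate averages by the scalar $(\tau(y)/\tau(e))\,e$ into an $\alpha$-approximation.
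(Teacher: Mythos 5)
Your proof is correct and follows essentially the same route as the paper: condition (2) via Dixmier averaging in the finite factor $e\mathcal{M}e$ combined with the operator-norm continuity $\alpha(a)\le\|a\|\,\alpha(e)$ on that corner, and condition (3)(b) via unitary invariance and comparison of projections in a factor. Your implementation of (3)(b) — cutting to equal-trace subprojections and invoking unitary equivalence directly — is a cleaner rendering of the paper's ``we may assume the $e_n$ are decreasing'' step, but it rests on exactly the same two facts (monotonicity of $\alpha$ on projections and Murray--von Neumann comparison), so the arguments are the same in substance.
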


    \begin{proof}
        By assumption, $\alpha$ is unitarily invariant.

    Let $e$ be projection in $\mathcal M$ such that $\tau(e)<\infty$.   Let $x=exe$ be an element in $e\mathcal M e$, which we denote by $\mathcal{M}_e$.  As $|x|\le \|x\|e$, we have that $\alpha(x)=\alpha(|x|)\leq \|x\|\alpha(e)$.  Note $\mathcal{M}_e$ is a finite factor with a tracial state $\tau_e$, defined by $\tau_e(y)=\tau(y)/\tau(e)$ for all $y\in \mathcal M_e$. By the Dixmeier Approximation Property, for every  $\epsilon>0$, there exist $c_1, c_2,\dots, c_n$ in $[0,1]$ with  $\sum_{i=1}^n c_i=1$  and unitaries $u_1, u_2, \dots, u_n$ in $e\mathcal{M}e$ such that $\|\tau_e(|x|)e-\sum_{i=1}^n c_i u_i x u_i^*\|<\epsilon$.  Therefore,  $ \alpha(\tau_e(|x|)e-\sum_{i=1}^n c_i u_i xu_i^*) \le \epsilon\alpha(e)$.  Thus, $$\begin{aligned} \|x\|_1&=\tau(|x|)  = \tau(e)\tau_e(|x|)=\frac {\tau(e)}{\alpha(e)}\alpha(\tau_e(|x|)e) \\ &\le \frac {\tau(e)}{\alpha(e)} [\alpha(\tau_e(|x|)e-\sum_{i=1}^n c_i u_i xu_i^*) + \alpha(\sum_{i=1}^n c_i u_i xu_i^*)] \\
&\le \epsilon \tau(e)+ \frac {\tau(e)}{\alpha(e)} \sum_{i=1}^n\alpha(c_i u_i xu_i^*) \\
 &\le \epsilon \tau(e)+ \frac {\tau(e)}{\alpha(e)} \alpha(x). \end{aligned}$$ 
   Letting $\epsilon\rightarrow 0$, we find that $\tau(x)\leq \frac {\tau(e)}{\alpha(e)} \alpha(x)$ for every $x$ in $\mathcal{M}_e$.  Namely,
            \begin{equation}
            \|exe\|_1\leq c\alpha(exe)  \qquad \text{ for all } x\in \mathcal I. \label{3.1}
            \end{equation}
        where $c=\frac {\tau(e)}{\alpha(e)}$.  Thus, $\alpha$ is locally $\|\cdot\|_1$-dominating.

        We now show that $\alpha$ is mutually continuous with respect to $\tau$. Actually, we need only to show that, if $\{e_\lambda\}$ is a net of projections in $\mathcal I$ such that $\alpha (e_\lambda)\rightarrow 0$, then $\tau(e_\lambda)\rightarrow 0$. Assume, to the contrary, that there exist a positive number $\epsilon>0$ and  a family $\{e_n\}$ of projections in $\mathcal I$ such that $\alpha(e_n)<1/n$ but $\tau(e_n)>\epsilon$ for each $n\in\mathbb N$. As $\mathcal M$ is a semifinite factor and $\alpha$ is unitarily invariant, we might assume further that $\{e_n\}_n$ is a decreasing sequence of projections in $\mathcal I$. Let $e_0 =\wedge_n e_n$. Then $\tau(e_0)\ge \epsilon$ and $\alpha(e_0)=0$ as $e_0\le e_n$ implies $\alpha(e_0)\le \alpha(e_n)<1/n$ for each $n$. This is a contradiction. Therefore,  if $\{e_\lambda\}$ is a net of projections in $\mathcal I$ such that $\alpha (e_\lambda)\rightarrow 0$, then $\tau(e_\lambda)\rightarrow 0$.

    \end{proof}

    \subsubsection{Non-commutative Banach function spaces}

In this subsection, we follow the notation of de Pagter in \cite{depag}.
We suppose, as before, that $\mathcal{M}$ is a von Neumann algebra with a semifinite, faithful, normal tracial state $\tau$. In this case, we have the ideal of the distrubtion function $d_x$, where $x$ is a $\tau$-measurable operator in $\mathcal{M}$.  We define $d_x$ by
    $$d_x(\lambda)= \tau(e^{|x|}(\lambda, \infty))\text{ for every }\lambda \geq 0,$$ where $e^{|x|}(\lambda, \infty)$ is the spectral projection of $|x|$ on $(\lambda,\infty)$.
It is easy to see that $d_x$ is decreasing, right-continuous and $d_x(\lambda)\rightarrow 0$ as $\lambda\rightarrow \infty$.  This allows us to define a generalized singular value function $$\mu(x;t)=\inf\{\lambda\geq 0:d_x(\lambda)\leq t\}\text{ for a given }t\geq 0 \text{ and for every }x\in\mathcal{M}.$$

\begin{definition}
Suppose that $(X,\Sigma, \nu)$ is a localizable measure space with the finite subset property.  Let $E$ be a two-sided ideal of the set of all complex-valued, $\Sigma$-measurable functions on $X$ with the identification of all functions equal a.e. with respect to $\nu$.  If $E$ has a norm $\|\cdot\|_E$ such that $(E,\|\cdot\|_E)$ is a Banach lattice, then $E$ is called a \emph{Banach function space}.
\end{definition}
 We assume that $E$ is a symmetric Banach function space on $(0,\infty)$ with Lebesgue meausure (see definition 2.6 in \cite{depag}).

 Following \cite{depag}, we  let $\mathcal{I}=\{x\in \mathcal{M}:x \text{ is a finite rank operator in }(\mathcal{M},\tau), \text{ and }\|\mu(x)\|_{E}<\infty\}$ and define a Banach function space $\mathcal{I}(\tau)$ equipped with a norm $\|\cdot\|_{\mathcal{I}(\tau)}$ such that
 $$\|x\|_{\mathcal{I}(\tau)}= \|\mu(x)\|_{E} \text { for every }x\in \mathcal{I}.$$

 Denote the closure of $\mathcal{I}$ under $\|\cdot\|_{\mathcal{I}(\tau)}$ by $\mathcal{I}(\tau)$. We will use the following Lemma to show that the restriction of $\|\cdot\|_{\mathcal{I}(\tau)}$ on $\mathcal I$ is a unitarily invariant, locally $\|\cdot\|_1$-dominating, mutually continuous norm with respect to $\tau$.

 \begin{lemma} \label{lemma3.7}
    Suppose that $y_0$ is an element of $\mathcal{I}$ such that $y_0=\sum_{i=1}^{n} \beta_i p_i$ where $\beta_1, \beta_2,\ldots, \beta_n $ are nonnegative and $p_1,\ldots, p_n$ are projections in $\mathcal{M}$  such that  $\tau(p_1)=\tau(p_2)=\dots=\tau(p_n)$.  Then
        $$\|y_0\|_{\mathcal{I}(\tau)}\geq \frac{\|p_1+\dots+p_n\|_{\mathcal{I}(\tau)}}{\tau(p_1+\dots+p_n)}\|y_0\|_1.$$
 \end{lemma}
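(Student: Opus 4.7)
The plan is to prove this by an averaging argument over permutations of $\{\beta_1,\dots,\beta_n\}$, exploiting the fact that $\|\cdot\|_{\mathcal{I}(\tau)}$ is defined through the singular value function $\mu$ and hence depends only on the multiset of ``eigenvalues with multiplicities.'' Let $t := \tau(p_1) = \cdots = \tau(p_n)$. For any permutation $\sigma \in S_n$, set $y_\sigma := \sum_{i=1}^n \beta_{\sigma(i)} p_i$; the assumption that the $p_i$ are orthogonal projections of common trace $t$ gives the distribution function
$$d_{y_\sigma}(\lambda) \;=\; t \cdot \#\{\, i : \beta_i > \lambda \,\} \;=\; d_{y_0}(\lambda),$$
independently of $\sigma$. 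Consequently $\mu(y_\sigma) = \mu(y_0)$, so $\|y_\sigma\|_{\mathcal{I}(\tau)} = \|y_0\|_{\mathcal{I}(\tau)}$.

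Next I would compute the average over $S_n$. For each $i$, exactly $(n-1)!$ permutations send $i \mapsto j$, so
$$\frac{1}{n!}\sum_{\sigma\in S_n} y_\sigma \;=\; \sum_{i=1}^n p_i \cdot \frac{1}{n!}\sum_{\sigma} \beta_{\sigma(i)} \;=\; \bar\beta \, (p_1 + \cdots + p_n),$$
where $\bar\beta := \tfrac{1}{n}\sum_{j=1}^n \beta_j$. Applying the triangle inequality for $\|\cdot\|_{\mathcal{I}(\tau)}$ together with Step 1 yields
$$\bar\beta\,\|p_1+\cdots+p_n\|_{\mathcal{I}(\tau)} \;=\; \Bigl\|\tfrac{1}{n!}\sum_\sigma y_\sigma\Bigr\|_{\mathcal{I}(\tau)} \;\le\; \tfrac{1}{n!}\sum_\sigma \|y_\sigma\|_{\mathcal{I}(\tau)} \;=\; \|y_0\|_{\mathcal{I}(\tau)}.$$

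It only remains to rewrite $\bar\beta$ in the desired form. Since $y_0 \geq 0$ and the $p_i$ are orthogonal,
$$\|y_0\|_1 \;=\; \tau(y_0) \;=\; \sum_{i=1}^n \beta_i \tau(p_i) \;=\; t\sum_{i=1}^n \beta_i, \qquad \tau(p_1+\cdots+p_n) \;=\; nt,$$
so $\bar\beta = \|y_0\|_1 / \tau(p_1+\cdots+p_n)$, and combining with the previous display gives the claim.

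The main subtlety is Step 1, the permutation invariance of the norm: it relies crucially on the equal-trace hypothesis together with the (implicit) orthogonality of the projections, for without both the distribution functions of $y_\sigma$ and $y_0$ could differ. Everything else is bookkeeping: linearity of $\tau$, the triangle inequality, and counting permutations. No unitary implementation of the permutation in $\mathcal{M}$ is needed, since the singular value function already encodes the relevant invariance.
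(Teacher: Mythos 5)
Your proof is correct and is essentially the paper's argument: both average the permuted operators $\sum_i \beta_{\sigma(i)}p_i$ (the paper uses only the $n$ cyclic shifts rather than all of $S_n$, which changes nothing), invoke invariance of $\|\cdot\|_{\mathcal{I}(\tau)}$ under permuting the coefficients among equal-trace orthogonal projections, and finish with the triangle inequality and the identity $\bar\beta=\|y_0\|_1/\tau(p_1+\dots+p_n)$. Your explicit justification of the invariance step via the distribution function, and your remark that orthogonality of the $p_i$ is implicitly needed, make the argument slightly more careful than the paper's, which simply asserts $\|y_k\|_{\mathcal{I}(\tau)}=\|y_0\|_{\mathcal{I}(\tau)}$ ``by definition.''
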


 \begin{proof}
 Note that  $y_0$ is an element of $\mathcal{I}$ such that $y_0=\sum_{i=1}^{n}\beta_i p_i$ where $\tau(p_1)=\tau(p_2)=\dots=\tau(p_n)$.  Now let $\beta_{n+j}=\beta_j$ for all $1\le j\le n$ and $y_j=\sum_{i=1}^{n} \beta_{i+j} p_i$ for $1\leq j \leq n$.  Then, by definition, $\sum_{k=1}^{n} y_k=(\beta_1+\dots+\beta_n)(p_1+\dots +p_n)$, and also $\| y_k\|_{\mathcal{I}(\tau)}=\| y_0\|_{\mathcal{I}(\tau)}$ for all $1\le k\le n$. Therefore,
    \begin{align}
        \|y_0\|_{\mathcal{I}(\tau)}&\ge\frac{\|\sum_{k=1}^{n} y_k\|_{\mathcal{I}(\tau)}}{n}\notag\\
        &\geq (\frac{\beta_1+\dots+\beta_n}{n})\|p_1+\dots+p_n\|_{\mathcal{I}(\tau)} \notag\\
        &=\frac{\tau(y_0)}{\tau(p_1+\dots+p_n)}\|p_1+\dots +p_n\|_{\mathcal{I}(\tau)} \notag \\
        &=\|y_0\|_1\frac{\|p_1+\dots+p_n\|_{\mathcal{I}(\tau)}}{\tau(p_1+\dots+p_n)}\notag.
    \end{align}
 \end{proof}

 \begin{proposition} \label{prop3.3}
Suppose that $\mathcal{I}(\tau)$ is a Banach function space. Suppose that $\mathcal M$ is a diffuse von Neumann algebra   with a semifinite, faithful, normal tracial state $\tau$ and with an order continuous norm $\|\cdot\|_{\mathcal{I}(\tau)}$.  Then the restriction of $\|\cdot\|_{\mathcal{I}(\tau)}$ on $\mathcal I$   is a unitarily invariant, locally $\|\cdot\|_1$-dominating, mututally continuous norm with respect to $\tau$.
 \end{proposition}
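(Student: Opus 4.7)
The plan is to verify, in turn, the three conditions in Definition \ref{def3.1} for the restriction of $\|\cdot\|_{\mathcal I(\tau)}$ to $\mathcal I$. Unitary invariance is immediate from the defining formula: for unitaries $u, v \in \mathcal M$ the generalized singular value function satisfies $\mu(uxv; t) = \mu(x; t)$ because conjugation by unitaries preserves the distribution function $d_{|x|}$, and hence $\|uxv\|_{\mathcal I(\tau)} = \|\mu(uxv)\|_E = \|\mu(x)\|_E = \|x\|_{\mathcal I(\tau)}$.

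For local $\|\cdot\|_1$-dominance, fix a projection $e \in \mathcal M$ with $\tau(e) < \infty$ and $x \in \mathcal I$. My strategy is to invoke Lemma \ref{lemma3.7} after approximating $y := |exe|$ by sums supported on equal-trace pieces of $e$. Since $\mathcal M$ is diffuse, for every $n \in \mathbb N$ we can split $e = p_1^{(n)} + \cdots + p_n^{(n)}$ with $\tau(p_i^{(n)}) = \tau(e)/n$. Using Borel functional calculus for $y$ together with further diffuse subdivision, produce approximants $y_0^{(n)} = \sum_{i=1}^{n} \beta_i^{(n)} q_i^{(n)}$ with mutually orthogonal subprojections $q_i^{(n)} \le e$ of common trace summing to $e$, and with $\|y - y_0^{(n)}\| \to 0$ in operator norm. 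Lemma \ref{lemma3.7} applied to $y_0^{(n)}$ gives $\|y_0^{(n)}\|_{\mathcal I(\tau)} \geq c(e)\,\|y_0^{(n)}\|_1$ where $c(e) = \|e\|_{\mathcal I(\tau)}/\tau(e)$. Both $\|\cdot\|_1$ and $\|\cdot\|_{\mathcal I(\tau)}$ are dominated by the operator norm on $e \mathcal M e$ (by $\tau(e)$ and by $\|e\|_{\mathcal I(\tau)}$ respectively, via $\mu(z) \le \|z\|\,\chi_{[0,\tau(e))}$ for $z \in e\mathcal M e$), so the inequality survives in the limit and yields $\|exe\|_{\mathcal I(\tau)} \geq c(e)\|exe\|_1$.

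For mutual continuity with respect to $\tau$, I would prove (a) and (b) separately. For (a), take $\{e_\lambda\}$ with $e_\lambda \to I$ weak* and fix $x \in \mathcal I$. The net $a_\lambda := x^*(I-e_\lambda)x$ is decreasing in $\lambda$ and tends to $0$ strongly; since $a_\lambda \le x^*x$ with $\tau(x^*x)<\infty$, normality of $\tau$ forces $d_{a_\lambda}(s) = \tau(e^{a_\lambda}(s,\infty)) \downarrow 0$ for each $s > 0$, and therefore $\mu(a_\lambda; t) \downarrow 0$ pointwise in $t > 0$. Using $\mu((I-e_\lambda)x; t) = \mu(a_\lambda; t)^{1/2}$ together with the domination $(I-e_\lambda)xx^*(I-e_\lambda) \le xx^*$, which gives $\mu((I-e_\lambda)x;t) \le \mu(x;t) \in E$, order continuity of $\|\cdot\|_E$ then delivers $\|(e_\lambda - I)x\|_{\mathcal I(\tau)} \to 0$. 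For (b), observe that for any projection $e_\lambda$ of finite trace, $\mu(e_\lambda; \cdot) = \chi_{[0,\tau(e_\lambda))}$, so $\|e_\lambda\|_{\mathcal I(\tau)} = \|\chi_{[0,\tau(e_\lambda))}\|_E$; if this tends to $0$ but $\tau(e_\lambda) \not\to 0$, a subnet satisfies $\tau(e_\lambda) \ge \varepsilon > 0$, yielding $\|e_\lambda\|_{\mathcal I(\tau)} \ge \|\chi_{[0,\varepsilon)}\|_E > 0$, a contradiction.

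The main obstacle I anticipate is the local $\|\cdot\|_1$-dominance step, specifically justifying that $|exe|$ can be approximated in operator norm by equal-trace combinations $\sum \beta_i q_i$ with $\sum q_i = e$ in a way that permits a direct application of Lemma \ref{lemma3.7} with a constant independent of $n$. The remaining conditions reduce to standard singular-value manipulations combined with the order continuity hypothesis on $\|\cdot\|_E$.
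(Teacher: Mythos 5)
Your outline follows the paper's proof quite closely: both verify the three conditions of Definition \ref{def3.1} directly, both reduce local $\|\cdot\|_1$-dominance to Lemma \ref{lemma3.7}, and both get condition (3a) from order continuity applied to $\sqrt{x^*(I-e_\lambda)x}$ together with unitary invariance. Your argument for (3b) via $\mu(e_\lambda;\cdot)=\chi_{[0,\tau(e_\lambda))}$ is actually cleaner than the paper's, which argues by contradiction after replacing the projections by a decreasing sequence and taking $\wedge_n e_{\lambda_n}$; your version avoids that replacement entirely and is correct, since $\|\chi_{[0,\varepsilon)}\|_E>0$ for a symmetric Banach function space norm.

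The one genuine gap is exactly the obstacle you flag in the dominance step, and as stated your construction does not go through: if you insist that the equal-trace pieces $q_i^{(n)}$ sum exactly to $e$ \emph{and} that $\sum_i\beta_i^{(n)}q_i^{(n)}$ approximate $|exe|$ in operator norm, you run into the incommensurability of the traces of the spectral projections of $|exe|$ --- a spectral piece $p_j$ of trace $t_j$ cannot in general be tiled by projections of a fixed common trace $\tau(e)/n$, so any exact equal-trace partition of $e$ must contain ``mixed'' pieces on which $|exe|$ has large oscillation, and the operator-norm approximation fails. The paper resolves this by \emph{not} requiring the pieces to exhaust $e$: each spectral piece $p_i$ is cut into projections of trace $1/m$ plus one remainder $q_{i,k_i+1}$ of trace less than $1/m$, the approximant is split as $y_0=z_1+z_2$ with $z_1$ supported on the equal-trace pieces, Lemma \ref{lemma3.7} is applied to $z_1$ with $q=\sum_{i,j}q_{i,j}$ (which is only approximately $e$), and then $m\to\infty$ kills the remainder contributions in both $\|z_2\|_1$ and the ratio $\|q\|_{\mathcal I(\tau)}/\tau(q)\to\|e\|_{\mathcal I(\tau)}/\tau(e)$. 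You should adopt this remainder-piece decomposition (or an equivalent device, e.g.\ estimating the mixed pieces using order continuity of $\|\cdot\|_E$ on sets of small measure); without it the constant $c(e)=\|e\|_{\mathcal I(\tau)}/\tau(e)$ is not obtained. The remaining steps of your proposal are sound.
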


\begin{proof}  Note $\|\cdot\|_{\mathcal{I}(\tau)}:\mathcal{I}\rightarrow [0,\infty)$ is a norm.  Now we will verify that  $\|\cdot\|_{\mathcal{I}(\tau)}$ satisfies the following conditions:
    \begin{enumerate}
        \item $\|uxv\|_{\mathcal{I}(\tau)}=\|x\|_{\mathcal{I}(\tau)}$ for all unitaries $u,v$ in $\mathcal{M}$, and every $x$ in $\mathcal{I}$;
        \item for every projection $e$ in $\mathcal{M}$ with $\tau(e)<\infty$, there exists $c(e)<\infty$ such that $\|exe\|_{\mathcal{I}(\tau)}\geq c(e)\|exe\|_1$ for all $x\in\mathcal M$;
        \item
        \begin{enumerate}
            \item[a.] if $\{e_\lambda\}_{\lambda\in\Lambda}$ is a net in $\mathcal{M}$ such that $e_\lambda\rightarrow I$ in the weak* topology, then $\|e_\lambda x-x\|_{\mathcal{I}(\tau)}\rightarrow 0$ for every $x\in \mathcal{I}$.
            \item[b.] if $\{e_\lambda\}_{\lambda\in\Lambda}$ is a net in $\mathcal{M}$ such that $\|e_\lambda\|_{\mathcal{I}(\tau)}\rightarrow 0$, then $\tau(e_\lambda)\rightarrow 0$.
        \end{enumerate}
    \end{enumerate}

(1) We begin by showing that $\|uxv\|_{\mathcal{I}(\tau)}=\|x\|_{\mathcal{I}(\tau)}$.

Given any $x$ and $y$ in $\mathcal{I}$,  we know that if $\tau(|x|^n)=\tau(|y|^n)$ for every $n\in\mathbb{N}$, then $\|x\|_{\mathcal{I}(\tau)}=\|y\|_{\mathcal{I}(\tau)}$ from definition 3.4 in \cite{depag}.  We have that $\tau$ is unitarily invariant by defintion, so for all unitaries $u$ and $v$ in $\mathcal{M}$ and $x$ in $\mathcal{I}$,
    $$\tau(|uxv|^n)=\tau(v^{-n}|x|^nv^n)=\tau(|x|^n)\text{ for every }n\in\mathbb{N}.$$
Hence $\|uxv\|_{\mathcal{I}(\tau)}=\|x\|_{\mathcal{I}(\tau)}$, and $\|\cdot\|_{\mathcal{I}(\tau)}$ is unitarily invariant.


(3) a. We show that if $\{e_\lambda\}\subseteq \mathcal{I}$ is an increasing net of projections such that $e_\lambda \rightarrow I$ in the weak* topology, then $e_\lambda x \rightarrow x$ in $\|\cdot\|_{\mathcal{I}(\tau)}$-norm for each $x\in\mathcal I$.

Suppose that $\{e_\lambda\}\subseteq \mathcal{I}$ is an increasing net of projections such that $e_\lambda \rightarrow I$ in the weak* topology.  By definition, $\|\cdot\|_{\mathcal{I}(\tau)}$ is order continuous. So for every $x$ in $\mathcal{I}$, $\|\sqrt{x^*(I-e_\lambda)x}\|_{\mathcal{I}(\tau)}\rightarrow 0$, and  $\|(I-e_\lambda)x\|_{\mathcal{I}(\tau)}=\| |(I-e_\lambda)x| \|_{\mathcal{I}(\tau)}=\|\sqrt{x^*(I-e_\lambda)x}\|_{\mathcal{I}(\tau)}$ by (1).  Therefore, $\|x-e_\lambda x\|_{\mathcal{I}(\tau)} \rightarrow 0$ for every $x$ in $\mathcal{I}$, as desired.


b. We show that if $\{e_\lambda\}\subseteq \mathcal{I}$ is a  net of projections such that $\|e_\lambda\|_{\mathcal{I}(\tau)}\rightarrow 0$, then $\tau(e_\lambda)\rightarrow 0$.

  We suppose that $\{e_\lambda\}\subseteq \mathcal{I}$ is a  net of projections such that $\|e_\lambda\|_{\mathcal{I}(\tau)}\rightarrow 0$.  Suppose to the contrary, that $\tau(e_\lambda)\nrightarrow 0$.  There exist an $\epsilon_0>0$, a subsequence $\{e_{\lambda_n}\}$ of $\{e_\lambda\}_{\lambda\in\Lambda}$  such that for every $n\ge 1$, $\tau(e_{\lambda_n})\geq \epsilon_0$.  As $\|e_\lambda\|_{\mathcal{I}(\tau)}\rightarrow 0$, $\|e_{\lambda_n}\|_{\mathcal{I}(\tau)}\rightarrow 0$.  Recall that $\mathcal{M}$ has no minimal projection.  By the properties of the norm $\|\cdot \|_{\mathcal{I}(\tau)}$, we might assume that  $\{e_{\lambda_n}\}$ is a decreasing sequence of projections in $\mathcal I$.    Thus there exist an $x=\wedge_n e_{\lambda_n}$ in $\mathcal{M}$ such that $0\le x\leq e_{\lambda_n}$ for every n, and $\epsilon_0\le \tau(x) \leq \tau(e_{\lambda_n})$.  Moreover, we have that $\|e_{\lambda_n}\|_{\mathcal{I}(\tau)}\geq \|x\|_{\mathcal{I}(\tau)}$ for every n, so therefore, $\|x\|_{\mathcal{I}(\tau)}=0$.  Hence $x=0$, which   contradicts with the fact that $\epsilon_0\le \tau(x)$.


(2) We show that for a projection $e\in\mathcal{M}$ such that $\tau(e)<\infty$ there exists $c(e)=\frac{\|e\|_{\mathcal{I}(\tau)} }{\tau(e)}$ satisfying $\|exe\|_{\mathcal{I}(\tau)}\geq c(e)\|exe\|_1$ for all $x\in \mathcal M$.

Suppose that $e=e^2=e^*$ is a projection in $\mathcal{M}$ such that $\tau(e)<\infty$.  Let $x$ be a positive element in $\mathcal M$. For any $\epsilon>0$, there exist nonnegative numbers $ \beta_1,\beta_2, \dots, \beta_n $ and subprojections $p_1, p_2, \dots, p_n$ of $e$ in $\mathcal M$ such that $\|exe-\sum_{i=1}^{n} \beta_i p_i\|_{\mathcal{I}(\tau)}\le \|e-\sum_{i=1}^{n} \beta_i p_i \|\|e\|_{\mathcal{I}(\tau)}<\epsilon$ and $\|exe-\sum_{i=1}^{n} \beta_i p_i\|_1\le \|e-\sum_{i=1}^{n} \beta_i p_i \|\|e\|_1<\epsilon$.  We call $\sum_{i=1}^{n} \beta_i p_i=y_0$.  For each  $m\in\mathbb{N}$ and  $1\leq i\leq n$ , we partition $p_i=q_{i,1}+q_{i,2}+\dots+q_{i,k_i}+q_{i, k_i+1}$ where $k_{i}$ is a positive integer and $q_{i,1}, q_{i,2}, \dots, q_{i,k_i}$ are projections in $\mathcal M$  such that $\tau(q_{i,1})=\tau(q_{i,2})=\dots=\tau(q_{i,k_i}) =1/m$, and $0\le\tau(q_{i,k_i+1})<1/m$.  We can write
    $$y_0=\sum_{i=1}^{n}\beta_i (\sum_{j=1}^{k_i+1} q_{i,j})=z_1+z_2,$$
where $z_1=\sum_{i=1}^{n} \beta_i (\sum_{j=1}^{k_i} q_{i,j})$ and $z_2=\sum_{i=1}^{n} \beta_i q_{k_i+1}$.

We let $q=\sum_{i=1}^{n}\sum_{j=1}^{k_i}q_{i,j}$.  Then,  by Lemma \ref{lemma3.7}, $$\|y_0\|_{ \mathcal{I}{(\tau)}}\geq\|z_1\|_{\mathcal{I}(\tau)}\geq \frac{\|q\|_{\mathcal{I}(\tau)}}{\tau(q)}\|z_1\|_1.$$  Also, by the triangle inequality,
    $$\|z_1\|\geq \|y_0\|_1-\|z_2\|_1\geq \|y_0\|_1-(\sum_{i=1}^{n} \beta_i) /m,$$
which approaches $\|y_0\|_1$ as $m\rightarrow \infty$. Furthermore, by (3) we have
$$  \frac{\|q\|_{\mathcal{I}(\tau)}}{\tau(q)} \ge  \frac{\|e\|_{\mathcal{I}(\tau)}-\sum_{i=1}^{n} \beta_i \|q_{i,k_i+1}\|_{\mathcal{I}(\tau)}}{\tau(e)} \rightarrow  \frac{\|e\|_{\mathcal{I}(\tau)} }{\tau(e)}\ \text{ as $m\rightarrow \infty$}.
$$ Therefore,
        $$\|y_0\|_{\mathcal{I}{(\tau)}}\geq \frac{\|e\|_{\mathcal{I}(\tau)} }{\tau(e)}\|y_0\|_1.$$
By the choice of $y_0$, we conclude that
$$\|exe\|_{\mathcal{I}{(\tau)}}\geq \frac{\|e\|_{\mathcal{I}(\tau)} }{\tau(e)}\|exe\|_1, $$
for all $x$ in $\mathcal M$.

    \end{proof}


\subsection{Embedding from $L^\alpha(\mathcal{M},\tau)$ into $\widetilde{M}$}

    We would like to show that there is a natural embedding from $L^\alpha (\mathcal{M},\tau)$ into $\widetilde{\mathcal{M}}$.

    Suppose that $\mathcal{M}$ is a von Neumann algebra with a semifinite, faithful, normal tracial weight $\tau$, and $\mathcal{H}$ is a Hilbert space. Recall
 $$\mathcal{I}=span\{xey: x,y\in\mathcal{M}, e\in\mathcal{M}, e=e^2=e^* \text{ with }\tau(e)<\infty\}$$ is the set of elementary operators of $\mathcal{M}$.
 Define $\widetilde{\mathcal{M}}$ to be the algebra of closed, densely defined operators on $\mathcal{H}$ affiliated with $\mathcal{M}$. We recall that the measure topology on $\mathcal{M}$ is given by the family of neighborhoods $U_{\delta,\epsilon}=\{a\in\mathcal{M}:  \|ap\|\leq \epsilon \text{ and } \tau(p^\perp)\leq \delta\text{ for some projection }p \in\mathcal{M}\} $ for any $\epsilon,\delta>0$.

Suppose that  $\alpha$ is a unitarily invariant, locally $\|\cdot\|_1$-dominating, mutually continuous norm with respect to $\tau$ on $\mathcal{M}$.

    \begin{lemma}\label{lemma3.10}
        Let $\epsilon>0$ be given.  There exists $\delta_0>0$ such that if $e$ is a projection in $\mathcal{I}$ with $\alpha(e)<\delta_0$, then $\tau(e)<\epsilon$.
    \end{lemma}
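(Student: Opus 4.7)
The plan is to argue by contradiction and appeal directly to condition (iii)(b) in Definition \ref{def3.1}, which was stated precisely in anticipation of this kind of quantitative consequence. Condition (iii)(b) says that any net of projections in $\mathcal{I}$ whose $\alpha$-norm tends to $0$ also has trace tending to $0$. The lemma is simply the ``$\varepsilon$--$\delta$'' form of this statement, so the proof should amount to extracting a sequence that witnesses a failure.

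Concretely, I would fix $\varepsilon > 0$ and suppose toward a contradiction that no such $\delta_0$ exists. Then for every $n \in \mathbb{N}$, the value $\delta_0 = 1/n$ fails, producing a projection $e_n \in \mathcal{I}$ such that $\alpha(e_n) < 1/n$ while $\tau(e_n) \geq \varepsilon$. The sequence $\{e_n\}_{n \in \mathbb{N}}$ (viewed as a net indexed by $\mathbb{N}$) is then a net of projections in $\mathcal{I}$ with $\alpha(e_n) \to 0$. By condition (iii)(b) of Definition \ref{def3.1}, this forces $\tau(e_n) \to 0$, contradicting $\tau(e_n) \geq \varepsilon$ for all $n$.

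There is essentially no technical obstacle here; the only point worth noting is that one needs to interpret a countable sequence as a net, which is automatic since $\mathbb{N}$ is directed. The lemma is best viewed as the first in a chain of quantitative reformulations of the mutual continuity axiom that will be used later to transfer the weak${}^*$/measure-topology arguments over to $L^{\alpha}(\mathcal M, \tau)$ in the density lemmas (Lemmas \ref{lemma3.2}--\ref{lemma3.4}) and eventually in the proof of Theorem \ref{theorem3.1}.
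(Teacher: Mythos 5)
Your proof is correct and follows essentially the same route as the paper: both argue by contradiction, extract a sequence of projections $e_n$ with $\alpha(e_n)<1/n$ and $\tau(e_n)\geq\epsilon$, and invoke condition (iii)(b) of Definition \ref{def3.1} to reach a contradiction. No further comment is needed.
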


    \begin{proof}
        Suppose, to the contrary, that there exists an $\epsilon>0$ such that for every $\delta_0>0$, there exists a projection $e_{\delta_0}$ in $\mathcal I$ such that $\alpha(e_{\delta_0})<\delta$, and $\tau(e_{\delta_0})\geq\epsilon$.  Let $\delta_0=1/n$ for each $n\in\mathbb N$.  Then there exits a sequence $\{e_n\}_{n\in\mathbb{N}}$ such that for every $n\in\mathbb{N}$, $\alpha(e_n)<1/n$, and $\tau(e_n)\geq\epsilon$.  This is a contradiction, as $\alpha$ is mutually continuous with respect to $\tau$ (see definition \ref{def3.1}).  Therefore, the Lemma is proven.
    \end{proof}

    \begin{lemma}\label{lemma2.3}
        Suppose a sequence $\{a_n\}$ in $\mathcal{I}$ is Cauchy with respect to the norm $\alpha$.  Then $\{a_n\}$ is Cauchy in the measure topology.
    \end{lemma}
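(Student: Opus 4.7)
The plan is to use the spectral projections of $|a_n - a_m|$ to build the projection witnessing membership in the measure-topology neighborhood $U_{\delta,\epsilon}$, and to control the trace of its complement through $\alpha$ via Lemma \ref{lemma3.10}.

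Given $\epsilon, \delta > 0$, I first invoke Lemma \ref{lemma3.10} to obtain $\delta_0 > 0$ such that every projection $e \in \mathcal{I}$ with $\alpha(e) < \delta_0$ satisfies $\tau(e) < \delta$. Since $\{a_n\}$ is $\alpha$-Cauchy, I may pick $N$ large enough that $\alpha(a_n - a_m) < \epsilon\delta_0$ whenever $n, m \geq N$. Fix such $n, m$, set $x = a_n - a_m \in \mathcal{I}$, and consider the spectral decomposition of $|x|$. Let $q = e^{|x|}(\epsilon, \infty)$ and $p = I - q = e^{|x|}[0,\epsilon]$. Since $x$ is a finite rank operator, both $p^\perp = q$ and $|x|$ have finite trace on $(\epsilon,\infty)$, so $q \in \mathcal{I}$. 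By construction, $\||x|p\| \leq \epsilon$ and hence $\|xp\| \leq \epsilon$.

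It remains to estimate $\tau(q)$. From the spectral inequality $q|x|q \geq \epsilon q$ and the fact that $\alpha$ is monotone on positive operators (which follows from unitary invariance applied to commuting spectral projections, or directly from Lemma \ref{lemma3.4.3} by noting $\epsilon q = (q|x|q)^{1/2}(\text{contraction})(q|x|q)^{1/2}$-type bounds), I get $\epsilon\alpha(q) \leq \alpha(q|x|q)$. By Lemma \ref{lemma3.4.3}, $\alpha(q|x|q) \leq \|q\|\,\alpha(|x|)\,\|q\| = \alpha(|x|)$. Using the polar decomposition $x = u|x|$ with partial isometry $u \in \mathcal{M}$ and invoking Lemma \ref{lemma3.4.3} twice gives $\alpha(|x|) = \alpha(u^* x) \leq \alpha(x)$ and $\alpha(x) = \alpha(u|x|) \leq \alpha(|x|)$, so $\alpha(|x|) = \alpha(x)$. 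Combining,
\begin{equation*}
\alpha(q) \leq \frac{\alpha(x)}{\epsilon} < \delta_0,
\end{equation*}
and the choice of $\delta_0$ yields $\tau(q) < \delta$. Thus $a_n - a_m \in U_{\delta,\epsilon}$ for all $n, m \geq N$, completing the proof.

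The main technical point is the inequality $\epsilon\alpha(q) \leq \alpha(q|x|q)$; this is the step where the interaction between the spectral structure of $|x|$ and the unitarily invariant norm $\alpha$ must be justified carefully from the axioms available (unitary invariance plus the Lemma \ref{lemma3.4.3} module-type bound), since $\alpha$ is only defined a priori as a norm on $\mathcal{I}$ without a built-in monotonicity axiom. Everything else is a clean combination of spectral theory, the Cauchy hypothesis, and Lemma \ref{lemma3.10}.
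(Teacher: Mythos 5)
Your proof is correct and follows essentially the same route as the paper's: pass to the spectral projection $q$ of $|a_n-a_m|$ above a threshold, establish the Chebyshev-type bound $\epsilon\,\alpha(q)\le\alpha(a_n-a_m)$, and use Lemma \ref{lemma3.10} to convert smallness in $\alpha$ into smallness in $\tau$. The one step you rightly flag---monotonicity of $\alpha$ on positives---is most cleanly settled by writing $\epsilon q = w(q|x|q)w^*$ for a contraction $w\in\mathcal{M}$ (Douglas factorization from $\epsilon q\le q|x|q$) and then applying Lemma \ref{lemma3.4.3}, rather than the middle-factor form $(q|x|q)^{1/2}c(q|x|q)^{1/2}$ you sketch (to which Lemma \ref{lemma3.4.3} does not directly apply); the paper itself asserts this monotonicity without comment, so your treatment is if anything the more careful one.
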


    \begin{proof}
        To prove that $\{a_n\}\subseteq \mathcal I$ is Cauchy in the measure topology, it suffices to show that for every $\epsilon, \delta>0$, there exists an $N\in\mathbb{N}$ such that for $n,m>N$, there exists a projection $p_{m,n}$ satisfying $\| |a_m-a_n|p_{m,n}\|< \delta$ and $\tau((p_{m,n})^\perp)<\epsilon$.  By Lemma \ref{lemma3.10}, we know that there exists a $\delta_0>0$  such that
        \begin{equation}
        \text{if } e \text{ is a projection in  }\mathcal{I} \text{ with }\alpha(e)<\delta_0\text{, then }\tau(e)<\epsilon. \label{**}
        \end{equation}
        For each $m,n\in\mathbb{N}$, let $\{e_\lambda(m,n)\}$ be the spectral decomposition of $|a_m-a_n|$ in $\mathcal M$.  By the spectral decomposition theorem, we have $|a_m-a_n|=\int_0^{\infty}\lambda de_\lambda(m,n)$, and $\tau(|a_m-a_n|)=\int_0^{\infty} \lambda d\tau (e_\lambda(m,n))$.  Let $\lambda_0=\delta_0$.  Hence $\lambda_0 e_{\lambda_0}(m,n)^\perp \leq |a_m-a_n|e_{\lambda_0}(m,n)^\perp$.  So
            \begin{equation}
            \alpha(\lambda_0 e_{\lambda_0}(m,n)^\perp)\leq \alpha(|a_m-a_n|) \text{ for all } m,n\in\mathbb{N}.  \label{*}
            \end{equation}
        Recall that $\{a_n\}$ is Cauchy in $\alpha$-norm.  For $\epsilon_1=\lambda_0\delta_0 >0$, there exists $N\in\mathbb{N}$ such that for all $m,n>N$, $\alpha(a_m-a_n)<\epsilon_1$.  Combining with (\ref{*}), we have that for every $m,n>N$, $\lambda_0 \alpha(e_{\lambda_0}(m,n)^\perp)<\epsilon_1$. This implies that
            $$\alpha(e_{\lambda_0}(m,n)^\perp)<\epsilon_1/\lambda_0=\delta_0.$$

        Because of (\ref{**}), $\tau(e_{\lambda_0}(m,n)^\perp)<\epsilon$ for every $m,n>N$. Put $p_{m,n}=e_{\lambda_0}(m,n)$.  Then for every $m,n>N$,
            $$\||a_m-a_n|p_{m,n}\|\leq \lambda_0=\delta_0, \text{ and }\tau(p_{m,n}^\perp)<\epsilon.$$
        The proof is complete.
    \end{proof}

Therefore, there is a natural continuous mapping from $L^\alpha(\mathcal{M},\tau)$ into $\widetilde{\mathcal{M}}$.

    Let $e$ be a projection in $\mathcal{M}$ such that $\tau(e)<\infty$, and let $\mathcal{M}_e=e\mathcal{M}e$.  Define a faithful, normal, tracial state $\tau_e$ on $\mathcal{M}_e$ by $\tau_e(x)=\frac{1}{\tau(e)}\tau(x)$ for every $x$ in $\mathcal{M}_e$.

    It can be shown that $\tau_e$ is a finite, faithful, normal tracial state on $\mathcal{M}_e$.  Suppose that $\alpha$ is a unitarily invariant, locally $\|\cdot\|_1$-dominating, mutually continuous norm with respect to $\tau$ on $\mathcal{M}$.   Define $\alpha_{e}=\alpha|_{e\mathcal{M}e}$.  We define $\alpha_{e}':\mathcal{M}_e\rightarrow [0,\infty]$ by $\alpha_{e}'(x)=\sup\{|\tau(xy)|:y\in\mathcal{M}, \alpha_{e}(y)\leq 1\}$ for every $x$ in $\mathcal{M}_e$.  It may be shown that $\alpha_{e}'$ is indeed a norm, and we call $\alpha_{e}'$ the \emph{dual norm} of $\alpha_{e}$ (see \cite{CHS} for more information).  We define $L^{\alpha_{e}'}(\mathcal{M}_e,\tau)=\overline{\mathcal{M}_e}^{\alpha_{e}'}$.

    We may also define $\overline{\alpha_{e}}:L^1(\mathcal{M}_e,\tau)\rightarrow [0,\infty]$ by $\overline{\alpha_{e}}(x)=\sup\{|\tau(xy)|:y\in\mathcal{M}, \alpha_{e}'(y)\leq 1\}$ for every $x$ in $\mathcal{M}_e$, and $\overline{\alpha_{e}'}:L^1(\mathcal{M}_e,\tau)\rightarrow [0,\infty]$ by $\overline{\alpha_{e}'}=\sup\{|\tau(xy)|:y\in\mathcal{M}, \alpha_{e}(y)\leq 1\}$ for every $x$ in $\mathcal{M}_e$.  $L^{\overline{\alpha_{e}}}(\mathcal{M}_e,\tau)$ and $L^{\overline{\alpha_{e}'}}(\mathcal{M}_e,\tau)$ are defined to be $\overline{\mathcal{M}_e}^{\overline{\alpha_{e}}}$ and $\overline{\mathcal{M}_e}^{\overline{\alpha_{e}}'}$ respectively.



\begin{lemma}\label{lemma2.3.1}
    Let $\alpha$ be a unitarily invariant, $\|\cdot\|_1$-dominating, mutually continuous norm with respect to $\tau$.  Then $\alpha_{e}$, $\alpha_{e}'$, $\overline{\alpha_{e}'}$ and $\overline{\alpha_{e}}$ are unitarily invariant norms on $L^\alpha(\mathcal{M},\tau)$.
\end{lemma}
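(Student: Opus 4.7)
The plan is to reduce everything to the unitary invariance of the original norm $\alpha$ by extending unitaries in the corner $\mathcal{M}_e=e\mathcal{M}e$ to unitaries in the ambient algebra $\mathcal{M}$. The key observation is that if $u\in\mathcal{M}_e$ is a unitary in the corner (that is, $u^*u=uu^*=e$), then $\tilde u:=u+(I-e)$ is a unitary in $\mathcal{M}$, and for any $x\in e\mathcal{M}e$ one has $\tilde u x=ux$ and $x\tilde u=xu$. This device lets me pull unitary invariance out of the corner.

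For $\alpha_e$: given unitaries $u,v\in\mathcal{M}_e$ and $x\in\mathcal{M}_e$, I form $\tilde u,\tilde v$ as above. Then $\tilde u x\tilde v=uxv\in\mathcal{I}$ and I get
\[
\alpha_e(uxv)=\alpha(\tilde u x\tilde v)=\alpha(x)=\alpha_e(x),
\]
using unitary invariance of $\alpha$. That $\alpha_e$ is a norm is inherited from $\alpha$ (definiteness uses that $\alpha$ is a norm and $x\in e\mathcal{M}e\subseteq\mathcal{I}$).

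For $\alpha_e'$: given unitaries $u,v\in\mathcal{M}_e$ and $x\in\mathcal{M}_e$, I rewrite the defining supremum. For $y\in\mathcal{M}$, the trace property gives $\tau(uxvy)=\tau(xvyu)=\tau(x\cdot(e(vyu)e))$ since $x=exe$. Substituting $z=e(vyu)e\in\mathcal{M}_e$, I note that for $y\in\mathcal{M}_e$ already $z=vyu$ and $\alpha_e(vyu)=\alpha_e(y)$ by the unitary invariance of $\alpha_e$ proved above; for general $y\in\mathcal{M}$ one replaces $y$ by $eye$ with no loss, since $\tau(xy)=\tau(x\,eye)$ and $\alpha_e(eye)\le\alpha_e(y)$ (a point I would check by another Russo-Dye style averaging, or by restricting the defining supremum to $y\in\mathcal{M}_e$, which is the natural reading of the definition). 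Thus $\{vyu:\alpha_e(y)\le 1\}=\{z\in\mathcal{M}_e:\alpha_e(z)\le 1\}$, and
\[
\alpha_e'(uxv)=\sup\{|\tau(xz)|:z\in\mathcal{M}_e,\,\alpha_e(z)\le 1\}=\alpha_e'(x).
\]
Subadditivity and positive homogeneity of $\alpha_e'$ are standard supremum arguments; positive definiteness follows from local $\|\cdot\|_1$-domination of $\alpha$ on the corner $\mathcal{M}_e$, which supplies enough dual functionals $y$ to separate points.

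The norms $\overline{\alpha_e}$ and $\overline{\alpha_e'}$ are handled by exactly the same bilateral trace manipulation: for unitaries $u,v\in\mathcal{M}_e$ and $x\in L^1(\mathcal{M}_e,\tau)$,
\[
\overline{\alpha_e}(uxv)=\sup\{|\tau(uxvy)|:\alpha_e'(y)\le 1\}=\sup\{|\tau(xz)|:\alpha_e'(z)\le 1\}=\overline{\alpha_e}(x),
\]
where I have used the already-established unitary invariance of $\alpha_e'$ to guarantee that $z=vyu$ runs over the same unit ball as $y$; the argument for $\overline{\alpha_e'}$ is identical, swapping the roles of $\alpha_e$ and $\alpha_e'$. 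The main obstacle I expect is the minor notational tension that $\alpha_e$ is defined on the corner $\mathcal{M}_e$ rather than on $L^\alpha(\mathcal{M},\tau)$ itself, and that the defining supremum for $\alpha_e'$ ranges over $y\in\mathcal{M}$ rather than over $y\in\mathcal{M}_e$; once one observes (via the trace identity $\tau(xy)=\tau(xeye)$ for $x\in\mathcal{M}_e$) that these two suprema coincide, the unitary invariance propagates cleanly from $\alpha$ to each of the four norms.
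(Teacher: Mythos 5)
Your proposal is correct and follows essentially the same route as the paper: unitary invariance of $\alpha_e$ is inherited directly from that of $\alpha$, and invariance of $\alpha_e'$, $\overline{\alpha_e}$, $\overline{\alpha_e'}$ follows by the change of variables $y\mapsto vyu$ in the defining supremum via the trace property, which permutes the relevant unit ball. Your treatment is in fact more careful than the paper's on two points it glosses over — extending corner unitaries to $\mathcal{M}$ via $u+(I-e)$ before invoking unitary invariance of $\alpha$, and reconciling the supremum over $y\in\mathcal{M}$ with the one over $y\in\mathcal{M}_e$ — but the underlying argument is the same.
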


\begin{proof}
    Clearly, $\alpha_{e}(uxv)=\alpha(uxv)=\alpha(x)=\alpha_{e}(x)$ for unitaries $u$ and $v$ and an element $x$ in $\mathcal{M}_{e}\subset\mathcal{M}$.  Therefore, $\alpha_{e}$ is a unitarily invariant norm.

    Let $u$ and $v$ be unitaries, and $x$ be an element of $L^{\alpha_{e}'}(\mathcal{M}_{e},\tau_e)$.  Then
    \begin{align}
     \alpha_{e}'(uxv)& =\sup\{|\tau(uxvy)|:y\in\mathcal{M},\alpha_{e}(y)\leq 1\} \notag\\
        & =\sup\{|\tau(xuyv)|:y\in\mathcal{M},\alpha_{e}(y)\leq 1\}\notag\\
        &=\sup\{|\tau(xy_0)|:y_0\in\mathcal{M}, \alpha_{e}(y_0)\leq 1\}\notag\\
        &=\alpha_{e}'(x) \notag
     \end{align}
     for every $x\in L^{\alpha_{e}'}(\mathcal{M}_{e},\tau_e)$.
    Therefore, $\alpha_{e}'$ is unitarily invariant.

    The proofs that $\overline{\alpha_{e}}$ and $\overline{\alpha_{e}'}$ are unitarily invariant are similar.
\end{proof}

\begin{lemma}\label{lemma3.11.2}
Suppose $\alpha$ is a unitarily invariant, locally $\|\cdot\|_1$-dominating, mutually continuous norm with respect to $\tau$ on $\mathcal{M}$.  Then
    \begin{enumerate}
        \item[(i)] $\|x\|_1\leq \overline{\alpha_e}(x)$ for every $x\in L^{\overline{\alpha_e}}(\mathcal{M}_e,\tau)$; and
        \item[(ii)] $\|x\|_1\leq \overline{\alpha_e}'(x)$ for every $x\in L^{\overline{\alpha_e}'}(\mathcal{M}_e,\tau)$.
        \end{enumerate}
\end{lemma}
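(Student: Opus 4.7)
The plan is to prove both inequalities by an iterated trace-duality argument in the finite reduced algebra $\mathcal{M}_e = e\mathcal{M}e$, using the local $\|\cdot\|_1$-dominating hypothesis and a Russo--Dye bound. Since $\tau(e)<\infty$, the usual trace $L^1$--$L^\infty$ duality is available on $\mathcal{M}_e$, and for $x\in\mathcal{M}_e$ and $y\in\mathcal{M}$ the identity $\tau(xy)=\tau(x\cdot eye)$ (via $x=exe$ and the trace property) lets me replace $y\in\mathcal{M}$ by $eye\in\mathcal{M}_e$ in each of the suprema defining $\alpha_e'$, $\overline{\alpha_e}$ and $\overline{\alpha_e}'$.

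For part (i), the locally $\|\cdot\|_1$-dominating hypothesis furnishes a constant $c = c(e)>0$ with $\alpha_e(y)\ge c\|y\|_1$ for every $y\in\mathcal{M}_e$. Dualizing through the bilinear pairing $(y,z)\mapsto\tau(yz)$ on $\mathcal{M}_e$ gives $\alpha_e'(y)\le c^{-1}\|y\|_\infty$ on $\mathcal{M}_e$, so the $\alpha_e'$-unit ball of $\mathcal{M}_e$ contains $\{y:\|y\|_\infty\le c\}$. Dualizing a second time,
\[
\overline{\alpha_e}(x)=\sup\{|\tau(xy)|:\alpha_e'(y)\le 1\}\ \ge\ \sup\{|\tau(xy)|:\|y\|_\infty\le c\}\ =\ c\,\|x\|_1
\]
for every $x\in\mathcal{M}_e$. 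A density argument then extends the inequality to $L^{\overline{\alpha_e}}(\mathcal{M}_e,\tau)=\overline{\mathcal{M}_e}^{\,\overline{\alpha_e}}$ by continuity of both sides in $\overline{\alpha_e}$.

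For part (ii), I first establish the companion upper bound $\alpha_e(y)\le\alpha(e)\|y\|_\infty$ on $\mathcal{M}_e$. Given a unitary $u$ of $\mathcal{M}_e$, the operator $v:=u+(I-e)$ is a unitary of $\mathcal{M}$ with $u=ve=ev$, so unitary invariance of $\alpha$ yields $\alpha(u)=\alpha(ve)=\alpha(e)$. Applying the Russo--Dye theorem in the finite algebra $\mathcal{M}_e$ to $y/\|y\|_\infty$, together with the triangle inequality, then gives $\alpha_e(y)\le\alpha(e)\|y\|_\infty$. Consequently the $\alpha_e$-unit ball contains $\{y\in\mathcal{M}_e:\|y\|_\infty\le\alpha(e)^{-1}\}$, and dualizing produces
\[
\overline{\alpha_e}'(x)=\sup\{|\tau(xy)|:\alpha_e(y)\le 1\}\ \ge\ \sup\{|\tau(xy)|:\|y\|_\infty\le\alpha(e)^{-1}\}\ =\ \alpha(e)^{-1}\|x\|_1,
\]
again extended to $L^{\overline{\alpha_e}'}(\mathcal{M}_e,\tau)$ by density.

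The main subtlety in this argument is bookkeeping the explicit constants $c(e)$ and $\alpha(e)$ produced by the two duality passes: modulo the harmless normalization implicit in the statement (under which these constants are at most $1$), the argument reduces to a direct comparison of unit balls among $\alpha_e$, $\alpha_e'$, $\overline{\alpha_e}$, $\overline{\alpha_e}'$ and $\|\cdot\|_1$, $\|\cdot\|_\infty$ on the finite algebra $\mathcal{M}_e$. Once the Russo--Dye bound $\alpha_e(y)\le\alpha(e)\|y\|_\infty$ is in hand, no further analytic obstacle remains.
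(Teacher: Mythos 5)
Your route is genuinely different from the paper's. For (i) the paper argues in two lines: write the polar decomposition $x=uh$ with $u$ a unitary of $\mathcal{M}_e$ and $h=|x|\geq 0$, invoke unitary invariance of $\overline{\alpha_e}$ (Lemma \ref{lemma2.3.1}) to reduce to $\overline{\alpha_e}(h)$, and then bound the defining supremum from below by the single test element $y=e$, giving $\overline{\alpha_e}(h)\geq|\tau(h)|=\|x\|_1$. You instead run a double duality of unit balls, $\alpha_e\geq c(e)\|\cdot\|_1\Rightarrow\alpha_e'\leq c(e)^{-1}\|\cdot\|_\infty\Rightarrow\overline{\alpha_e}\geq c(e)\|\cdot\|_1$, and for (ii) you first prove the companion bound $\alpha_e(y)\leq\alpha(e)\|y\|_\infty$ via Russo--Dye after checking that $\alpha(u)=\alpha(e)$ for every unitary $u$ of $\mathcal{M}_e$ (the extension $u+(I-e)$ is indeed a unitary of $\mathcal{M}$, and $I-e$ never enters the argument of $\alpha$, so this is legitimate). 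Each of these individual steps is correct, and your method has the virtue of making the dependence on $e$ explicit.

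The gap is the final step. What your argument actually yields is $\overline{\alpha_e}(x)\geq c(e)\|x\|_1$ and $\overline{\alpha_e}'(x)\geq\alpha(e)^{-1}\|x\|_1$; the stated inequalities with constant $1$ follow only under a normalization that the hypotheses do not provide, namely $c(e)\geq 1$ for (i) and $\alpha(e)\leq 1$ for (ii). Your closing parenthetical, that it suffices for ``these constants'' to be \emph{at most} $1$, is backwards for (i): if $c(e)<1$ then $c(e)\|x\|_1\leq\|x\|_1$ and nothing follows. Note that $\overline{\alpha_e}$ scales linearly with $\alpha$ (replace $\alpha$ by $t\alpha$ and $\overline{\alpha_e}$ becomes $t\,\overline{\alpha_e}$), so no constant-free inequality of this kind can hold for the whole class of norms in Definition \ref{def3.1}. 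To be fair, the paper's own proof carries the same implicit normalization --- testing the supremum at $y=e$ presumes $\alpha_e'(e)\leq 1$, which is again not a consequence of the definition --- and the constant-dependent versions you prove are all that the later applications (e.g.\ the completeness of $L^{\overline{\alpha_e}}(\mathcal{M}_e,\tau)$) actually require. If you restate the conclusion with the constants $c(e)$ and $\alpha(e)^{-1}$, or add the normalization as a hypothesis, your proof is complete.
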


\begin{proof}

    (i) Suppose that $x$ is in $L^{\overline{\alpha_e}}(\mathcal{M}_e,\tau)\subseteq L^1(\mathcal{M}_e,\tau)$.  Let $x=uh$ be the polar decomposition of $x$ in $L^1(\mathcal{M}_e,\tau)$, such that $u$ is a unitary in $\mathcal{M}_e$, and $h$ is positive in $L^1(\mathcal{M}_e,\tau)$.  As $\overline{\alpha_e}$ is unitarily invariant (see Lemma \ref{lemma2.3.1}),
        \begin{equation}
        \overline{\alpha_e}(x)=\overline{\alpha_e}(uh)=\overline{\alpha_e}(h).  \label{3.12}
        \end{equation}
    By definition, $\overline{\alpha_e}(h)\geq|\tau(h)|=\|x\|_1$. Hence, combining with \ref{3.12},
        $$\|x\|_1\leq \overline{\alpha_e}(x).$$

    (ii) The proof of (ii) is similar.
\end{proof}
\begin{lemma}\label{lemma2.4}
        For every $y\in \mathcal{M}_e$ and every $z\in L^1(\mathcal{M}_e,\tau)$, $\alpha_e '(yz)\leq \|y\|\overline{\alpha_e '}(z)$.
    \end{lemma}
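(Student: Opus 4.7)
The plan is to unwind the definition of $\alpha_e'(yz)$ as a supremum of trace pairings, use the trace property to move $y$ past $z$, and then apply the operator-norm bound for $\alpha$ (Lemma \ref{lemma3.4.3}) to absorb $\|y\|$ into the admissible test element before taking the supremum.

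First, I observe that since $\alpha_e$ is defined on $\mathcal M_e=e\mathcal Me$, the defining supremum for $\alpha_e'$ (and its $L^1$-extension $\overline{\alpha_e'}$) may be restricted to $w\in\mathcal M_e$ without loss, because for $w\in\mathcal M$ we have $\tau(xw)=\tau(exew)=\tau(x\,ewe)$ whenever $x\in\mathcal M_e$ or $x\in L^1(\mathcal M_e,\tau)$. So I would fix $y\in\mathcal M_e$, $z\in L^1(\mathcal M_e,\tau)$, and consider an arbitrary test element $w\in\mathcal M_e$ with $\alpha_e(w)\leq 1$. Note that $yz\in L^1(\mathcal M_e,\tau)$, so $\alpha_e'(yz)$ on the left-hand side is interpreted via the same sup formula (i.e.\ as $\overline{\alpha_e'}(yz)$).

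The key computation is to use the trace property to write $\tau(yzw)=\tau(zwy)$, which is valid because the $L^1$-trace is cyclic on products of an $L^1$-element with bounded operators. Then I would apply Lemma \ref{lemma3.4.3}, which gives $\alpha(wy)\leq \|y\|\,\alpha(w)$, and hence $\alpha_e(wy)\leq \|y\|\,\alpha_e(w)\leq \|y\|$. By the definition of $\overline{\alpha_e'}$ (which yields the inequality $|\tau(z x)|\leq \overline{\alpha_e'}(z)\,\alpha_e(x)$ for all $x\in\mathcal M_e$, by scaling $x$ so that $\alpha_e(x)\le 1$), this produces
\begin{equation*}
|\tau(yzw)|=|\tau(zwy)|\leq \overline{\alpha_e'}(z)\,\alpha_e(wy)\leq \|y\|\,\overline{\alpha_e'}(z).
\end{equation*}
Taking the supremum over all admissible $w$ yields $\alpha_e'(yz)\leq \|y\|\,\overline{\alpha_e'}(z)$.

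There is essentially no obstacle here; the only subtlety is that one must check that $wy\in\mathcal M_e$ (immediate since $w=ewe$ and $y=eye$ together imply $wy=eweye\in e\mathcal Me$), and that the cyclicity of the trace applies to the product $zwy$ with $z\in L^1(\mathcal M_e,\tau)$ and $wy\in\mathcal M_e$, which is a standard fact about the $L^1$-trace on a semifinite algebra. Altogether the proof is just the combination of Lemma \ref{lemma3.4.3}, cyclicity of $\tau$, and duality.
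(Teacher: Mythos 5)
Your proof is correct, but it follows a genuinely different route from the paper's. The paper works directly on the dual norm: it normalizes to $\|y\|=1$, takes the polar decomposition $y=\omega|y|$ (where $\omega$ is unitary because $\mathcal M_e$ is a finite von Neumann algebra), writes $|y|=\frac{v+v^*}{2}$ with $v=|y|+i\sqrt{1-|y|^2}$ unitary, and then uses the unitary invariance of $\overline{\alpha_e'}$ (Lemma \ref{lemma2.3.1}) together with the triangle inequality to get $\overline{\alpha_e'}(yz)=\overline{\alpha_e'}\bigl(\tfrac{vz+v^*z}{2}\bigr)\leq\overline{\alpha_e'}(z)$. You instead exploit the fact that $\overline{\alpha_e'}$ is defined by a trace pairing: you unwind the supremum, use cyclicity of the trace to transfer $y$ onto the test element $w$, and absorb $\|y\|$ via the module inequality $\alpha_e(wy)\leq\|y\|\alpha_e(w)$ from Lemma \ref{lemma3.4.3}. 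Both arguments ultimately rest on averaging over unitaries (Lemma \ref{lemma3.4.3} is itself proved by Russo--Dye), but yours avoids the normalization and the explicit two-unitary decomposition of a contraction, handles arbitrary $y\in\mathcal M_e$ uniformly, and makes transparent that the right $\mathcal M_e$-module property of the dual norm is inherited automatically from the corresponding property of $\alpha_e$ by duality; the paper's version has the advantage of being self-contained on the dual side, needing only unitary invariance of $\overline{\alpha_e'}$. Your preliminary observations --- that the test elements in the supremum may be taken in $\mathcal M_e$, that $wy\in\mathcal M_e$, and that the left-hand side must be read as $\overline{\alpha_e'}(yz)$ since $yz$ is only in $L^1(\mathcal M_e,\tau)$ --- are all consistent with what the paper implicitly does.
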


    \begin{proof}
        Suppose $y\in \mathcal{M}_e$ such that $\|y\|=1$, and let $y=\omega |y|$ be the polar decomposition of $y$ in $\mathcal{M}_e$, i.e. $\omega \in \mathcal{M}_e$ is unitary and $|y|\in \mathcal{M}_e$ is positive.  Define $v=|y|+i\sqrt{1-|y|^2}$. Then by construction, $v$ is unitary in $\mathcal{M}_e$, and $|y|=\frac{v+v^*}{2}$.  Consider any $z$ in $L^1(\mathcal{M}_e,\tau)$.  Then we have that
            $$\overline{\alpha_e '}(yz)=\overline{\alpha_e '}(\omega|y|z)=\overline{\alpha_e '}(\frac{vz+v^*z}{2})\leq \frac{\overline{\alpha_e '}(vz)+\overline{\alpha_e '}(v^*z)}{2}\text{ for every }z\text{ in }L^1(\mathcal{M}_e,\tau),$$
        and $y$ in $\mathcal{M}_e$ such that $\|y\|=1$.  Thus $\overline{\alpha_e '}(yz)\leq \|y\|\overline{\alpha_e '}(z)$ for every $z$ in $L^1(\mathcal{M}_e,\tau)$ and $y$ in $\mathcal{M}_e$.
    \end{proof}

    \begin{lemma} \label{lemma2.2}
        For every $x\in\mathcal{M}_e$, $\alpha_{e}(x)=\overline{\alpha_{e}}(x)$.
    \end{lemma}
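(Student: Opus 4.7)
My plan is to prove the two inequalities $\overline{\alpha_e}(x) \leq \alpha_e(x)$ and $\alpha_e(x) \leq \overline{\alpha_e}(x)$ separately. The first is immediate from the duality pairing: for any $y \in \mathcal{M}_e$ with $\alpha_e'(y) \leq 1$, the defining supremum for $\alpha_e'(y)$ gives $|\tau(xy)| \leq \alpha_e(x)\alpha_e'(y) \leq \alpha_e(x)$ whenever $\alpha_e(x) > 0$, so taking the supremum over such $y$ yields $\overline{\alpha_e}(x) \leq \alpha_e(x)$.

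For the reverse inequality, I would use a Hahn--Banach argument followed by a normality step. Fixing $x \in \mathcal{M}_e$ with $\alpha_e(x) > 0$, the Hahn--Banach theorem applied to the one-dimensional subspace $\mathbb{C}x \subseteq (\mathcal{M}_e, \alpha_e)$ produces a linear functional $\phi \colon \mathcal{M}_e \to \mathbb{C}$ satisfying $\phi(x) = \alpha_e(x)$ and $|\phi(z)| \leq \alpha_e(z)$ for every $z \in \mathcal{M}_e$. Since $\mathcal{M}_e = e\mathcal{M}e$ is a finite von Neumann algebra (because $\tau(e) < \infty$) with tracial state $\tau_e = \tau/\tau(e)$, its predual is $L^1(\mathcal{M}_e, \tau_e)$ and every normal functional takes the form $z \mapsto \tau(zy)$ for some $y \in \mathcal{M}_e$. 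Once this representation is established for $\phi$, the definition of $\alpha_e'$ gives $\alpha_e'(y) = \sup\{|\phi(z)| : \alpha_e(z) \leq 1\} \leq 1$, and hence $\overline{\alpha_e}(x) \geq |\tau(xy)| = \phi(x) = \alpha_e(x)$.

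The main obstacle is showing that $\phi$ is a normal functional. I would handle this by invoking the mutual continuity condition (3) of Definition \ref{def3.1}: for any increasing net of projections $\{e_\lambda\}$ in $\mathcal{M}_e$ with $e_\lambda \uparrow e$ in the weak* topology, condition (3)(a) gives $\alpha_e(e_\lambda z - z) \to 0$, so $\phi(e_\lambda z) \to \phi(z)$ for each $z \in \mathcal{M}_e$. Furthermore, $\phi$ is bounded in operator norm because $\alpha_e(z) \leq \alpha_e(e)\|z\|$ on $\mathcal{M}_e$---this bound follows from Lemma \ref{lemma3.4.3} together with a Russo--Dye decomposition of $z$ into unitaries of $\mathcal{M}_e$, which yields $\alpha(u_i)=\alpha(e)$ for each such unitary. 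These two properties (continuity along increasing projection nets and operator-norm boundedness) force the singular part of $\phi$ in the Banach-dual decomposition $\mathcal{M}_e^* = (\mathcal{M}_e)_\# \oplus (\mathcal{M}_e)^s$ to vanish, so $\phi$ is normal and the desired representation follows. The delicate point is precisely this vanishing of the singular component, which is where the mutual continuity hypothesis becomes indispensable; without it, the Hahn--Banach extension could possess a nonzero singular piece and the duality $\alpha_e = \overline{\alpha_e}$ would fail.
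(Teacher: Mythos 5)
Your overall strategy is the same as the paper's: the inequality $\overline{\alpha_e}(x)\leq\alpha_e(x)$ follows directly from the duality pairing exactly as you say, and the reverse inequality is obtained from a Hahn--Banach functional that norms $x$. Your normality argument for $\phi$ (operator-norm boundedness via $\alpha_e(z)\leq\|z\|\alpha_e(e)$, plus continuity along increasing nets of projections from condition (3)(a)) is a reasonable, if somewhat compressed, way to get $\phi$ into the predual; the paper instead takes $\varphi$ in the Banach dual of $L^{\alpha_e}(\mathcal{M}_e,\tau)$ and invokes the duality results of \cite{CHS} to represent it. Either route is acceptable up to that point.

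The genuine gap is in the representation step. The predual of the finite von Neumann algebra $\mathcal{M}_e$ is $L^1(\mathcal{M}_e,\tau_e)$, so a normal functional has the form $z\mapsto\tau(z\xi)$ with $\xi\in L^1(\mathcal{M}_e,\tau_e)$, which is in general an \emph{unbounded} operator affiliated with $\mathcal{M}_e$; your assertion that $y\in\mathcal{M}_e$ is false in general. This is not a cosmetic slip: the supremum defining $\overline{\alpha_e}(x)$ ranges only over \emph{bounded} $y$ with $\alpha_e'(y)\leq 1$ (indeed $\alpha_e'$ is only defined on $\mathcal{M}_e$), so you cannot conclude $\overline{\alpha_e}(x)\geq|\tau(xy)|$ from an unbounded representing density. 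The paper closes exactly this hole: it writes the polar decomposition $\xi=uh$, uses Lemma 3.8 of \cite{CHS} to produce spectral projections $e_\lambda$ of $h$ with $he_\lambda\in\mathcal{M}_e$ and $\|h-he_\lambda\|_1\to 0$, checks via Lemma \ref{lemma2.4} and the equality $\alpha_e'=\overline{\alpha_e'}$ on $\mathcal{M}_e$ that $\alpha_e'(uhe_\lambda)\leq\overline{\alpha_e'}(uh)=1$, and then passes to the limit in $\tau(xuhe_\lambda)\to\tau(xuh)$. You have also mislocated the main difficulty: the delicate point is not the vanishing of the singular part of $\phi$ but this truncation of the $L^1$ density to bounded elements without increasing its dual norm. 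Your proof needs that approximation argument to be complete.
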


    \begin{proof}
        First, we show that $\overline{\alpha_{e}}(x)\leq \alpha_{e}(x)$ for every $x$ in $\mathcal{M}_e$.  By definition, $|\tau(xy)|\leq \alpha_{e}(x)\alpha_{e}'(y)$ for every $x$ and $y$ in $\mathcal{M}_e$.  Suppose $\alpha_{e}'(y)\leq 1$.  Then $|\tau(xy)|\leq\alpha_{e}(x)\alpha_{e}'(y)<\alpha_{e}(x)$ for every $x$ in $\mathcal{M}_e$, and $y$ in $\mathcal{M}_{e}$ such that $\alpha_{e}'(y)\leq 1$. Hence
            \begin{equation}
            \overline{\alpha_{e}}(x)=\sup\{|\tau(xy)|:y\in \mathcal{M}_e, \alpha_{e}'(y)\leq 1\}\le \alpha_{e}(x) \label{3.13}
            \end{equation}
        by definition.

        Next, we show that $\overline{\alpha_{e}}(x)\geq \alpha_{e}(x)$.  Suppose $x$ is in $\mathcal{M}_e$ with $\alpha_{e}(x)=1$.  Then by the Hahn-Banach Theorem, there exists a $\varphi$ in $L^{\alpha_{e}}(\mathcal{M}_{e},\tau)^\#$ such that $\varphi(x)=\alpha_{e}(x)=1$, and $\|\varphi\|=1$.  Since $\varphi$ is in $L^{\alpha_{e}}(\mathcal{M}_{e},\tau)^\#$, there exists $\xi$ in $L^{\overline{\alpha_{e}'}}(\mathcal{M}_e,\tau)$ such that $\varphi(x)=|\tau(x\xi)|=1$, and $\overline{\alpha_{e}}'(\xi)=\|\xi\|=1$.  Let $\xi=uh$ be the polar decomposition of $\xi$ in $L^{\overline{\alpha_{e}'}}(\mathcal{M}_e,\tau)$, where $u\in \mathcal{M}_e$ is unitary and $h\in L^{\overline{\alpha_{e}'}}(\mathcal{M}_e,\tau)$ is positive.

        By Lemma 3.8 in \cite{CHS}, there exists a family $\{e_\lambda\}$ of projections in $\mathcal{M}_e$ such that $\|h-he_\lambda\|_1\rightarrow 0$, and $e_\lambda h=he_\lambda\in \mathcal{M}_e$ for every $0<\lambda<\infty$.  Also, $u\in \mathcal{M}_e$, so $uhe_\lambda\in \mathcal{M}_e$.  Thus $\alpha_{e}'(uhe_\lambda)=\overline{\alpha_{e}}'(uhe_\lambda)\leq \overline{\alpha_{e}}'(uh)\|e_\lambda\|\leq \overline{\alpha_{e}}'(uh)=\alpha_{e}'(\xi)=1$, as $\alpha_{e}'(x)=\overline{\alpha_{e}}'(x)$ for every $x\in \mathcal{M}_e$ by Lemma 3.2 in \cite{CHS}.  So, $\alpha_{e}(x)|\tau(x\xi)|=|\tau(xuh)|=\lim_{\lambda\rightarrow\infty}|\tau(xuhe_\lambda)|\leq\sup\{|\tau(xy)|:y\in \mathcal{M}_e, \alpha_{e}'(y)\leq 1\}=\overline{\alpha_{e}}(x)$.  Therefore
            \begin{equation}
            \alpha_{e}(x)\leq \overline{\alpha_{e}}(x).\label{3.14}
            \end{equation}
        Hence from equations \ref{3.13} and \ref{3.14}, $\alpha_{e}(x)=\overline{\alpha_{e}}(x)$, and the Lemma is proven.
    \end{proof}

    \begin{lemma}
        $L^{\overline{\alpha_e}}(\mathcal{M}_e,\tau)=\{x\in L^1(\mathcal{M}_e):\overline{\alpha_e}(x)<\infty\}$ is a complete space in $\alpha_e$-norm.
    \end{lemma}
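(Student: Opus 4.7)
The plan is to establish both the set identity and completeness by leveraging lower semi-continuity of $\overline{\alpha_e}$ with respect to $\|\cdot\|_1$-convergence. For each $y\in\mathcal M_e$ with $\alpha_e'(y)\le 1$, the functional $z\mapsto\tau(zy)$ is bounded on $L^1(\mathcal M_e,\tau)$ (by $\|y\|<\infty$), so if $x_n\to x$ in $\|\cdot\|_1$ then $|\tau(xy)|=\lim_n|\tau(x_ny)|\le\liminf_n\overline{\alpha_e}(x_n)$; taking the supremum over admissible $y$ yields $\overline{\alpha_e}(x)\le\liminf_n\overline{\alpha_e}(x_n)$.

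Completeness of $\{x\in L^1(\mathcal M_e,\tau):\overline{\alpha_e}(x)<\infty\}$ in $\overline{\alpha_e}$-norm follows by a sandwich argument. Given a $\overline{\alpha_e}$-Cauchy sequence $(x_n)$, Lemma \ref{lemma3.11.2} gives $\|x_n-x_m\|_1\le\overline{\alpha_e}(x_n-x_m)$, so $(x_n)$ is $\|\cdot\|_1$-Cauchy with some limit $x\in L^1(\mathcal M_e,\tau)$. Applying the lower semi-continuity to each Cauchy tail $(x_n-x_m)_m$ yields $\overline{\alpha_e}(x_n-x)\le\liminf_m\overline{\alpha_e}(x_n-x_m)$, which tends to zero as $n\to\infty$; in particular $\overline{\alpha_e}(x)<\infty$.

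For the set identity, the inclusion $L^{\overline{\alpha_e}}(\mathcal M_e,\tau)\subseteq\{x\in L^1(\mathcal M_e):\overline{\alpha_e}(x)<\infty\}$ is then immediate: any element is a $\overline{\alpha_e}$-limit of a sequence in $\mathcal M_e$, which by the above has a well-defined $L^1$-limit lying in the finite-norm set (with the two limits agreeing). For the reverse inclusion, take $x$ in the right-hand set; via polar decomposition in the finite von Neumann algebra $\mathcal M_e$ and the unitary invariance of $\overline{\alpha_e}$ (Lemma \ref{lemma2.3.1}), I would reduce to $x\ge 0$, then set $e_n=\chi_{[0,n]}(x)\in\mathcal M_e$ and $x_n=xe_n\in\mathcal M_e$. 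Writing $x-x_n=(I-e_n)x(I-e_n)$, the dual description of $\overline{\alpha_e}$ together with Lemma \ref{lemma2.2} (which identifies $\overline{\alpha_e}$ with $\alpha_e$ on $\mathcal M_e$) and the mutual continuity hypothesis on $\alpha$ (Definition \ref{def3.1}(3)) should furnish $\overline{\alpha_e}(x-x_n)\to 0$, placing $x$ in $L^{\overline{\alpha_e}}(\mathcal M_e,\tau)$.

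The main obstacle is this last step, namely the $\overline{\alpha_e}$-convergence of the spectral truncations. Unlike the $\|\cdot\|_1$-convergence, which is immediate from monotone convergence, the $\overline{\alpha_e}$-convergence requires an order-continuity property that is not automatic for a bidual-type norm. The key will be to exploit that $\mathcal M_e$ is finite and that on this subalgebra $\overline{\alpha_e}$ coincides with $\alpha_e$, so the order-continuity-type conclusion can be transferred from the mutual continuity condition on the ambient $\alpha$ after reducing to elementary operators with finite-trace support.
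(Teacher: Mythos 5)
Your completeness argument is essentially identical to the paper's: you use Lemma \ref{lemma3.11.2} to get a $\|\cdot\|_1$-limit of an $\overline{\alpha_e}$-Cauchy sequence, then exploit the supremum description of $\overline{\alpha_e}$ (lower semicontinuity of $z\mapsto\sup_{\alpha_e'(y)\le 1}|\tau(zy)|$ along $\|\cdot\|_1$-convergent sequences) to show the limit has finite norm and that the convergence upgrades to $\overline{\alpha_e}$-norm; this is exactly the paper's proof. The ``main obstacle'' you flag --- proving that every $x\in L^1(\mathcal M_e,\tau)$ with $\overline{\alpha_e}(x)<\infty$ is actually an $\overline{\alpha_e}$-limit of elements of $\mathcal M_e$, so that the two descriptions of $L^{\overline{\alpha_e}}(\mathcal M_e,\tau)$ coincide --- is not addressed in the paper either: its proof silently treats the set $\{x\in L^1(\mathcal M_e):\overline{\alpha_e}(x)<\infty\}$ as the space in question and proves only completeness. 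So your proposal reproduces everything the paper actually proves, and your unresolved step is a genuine gap in the paper's own argument rather than a defect of your approach; if you want to close it, the spectral-truncation route you sketch is the natural one, but it does require an order-continuity property of $\overline{\alpha_e}$ that must be extracted from Definition \ref{def3.1}(3a) and Lemma \ref{lemma2.2}, not merely asserted.
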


    \begin{proof}
        It suffices to show that for every Cauchy sequence $\{b_n\}$ in $L^{\overline{\alpha_e}}(\mathcal{M}_e,\tau)$, there exists $b$ in $L^{\overline{\alpha_e}}(\mathcal{M}_e,\tau)$ such that $b_n\rightarrow b$ in $\overline{\alpha_e}$-norm. Suppose that $\{b_n\}$ is a Cauchy sequence in $L^{\overline{\alpha_e}}(\mathcal{M}_e,\tau)$.  There exists $M>0$ such that $\overline{\alpha_e}(b_n)\leq M$ for every $n$.

        By Lemma \ref{lemma3.11.2},
            $$\|b_n-b_m\|_1\leq \overline{\alpha}(b_n-b_m)\text{ for all }m,n\geq 1.$$
        Therefore, $\{b_n\}$ is Cauchy in $L^1(\mathcal{M}_e,\tau)$, which is complete.  So there exists a $b_0$ in $L^1(\mathcal{M}_e,\tau)$ such that $\|b_n-b_0\|_1\rightarrow 0$.

        First, we claim that $b_0$ is in $L^{\overline{\alpha_e}}(\mathcal{M}_e,\tau)$.  Let $y\in \mathcal{M}_e$ such that $\alpha_e'(y)\leq 1$.  We have that $|\tau(b_n y)-\tau(b_0 y)|=|\tau((b_n-b_0)y)|\leq \|b_n-b_0\|_1\|y\|_\infty$ by H\"older's Inequality. However, $\|b_n-b_0\|_1\|y\|_\infty\rightarrow 0$.  Also, by the definition of $\overline{\alpha}$, we also have that $|\tau(b_0 y)|=\lim_{n\rightarrow\infty}|\tau(b_n y)|\leq \limsup_{n\rightarrow \infty} \overline{\alpha_e}(b_n)\alpha_e '(y)\leq M$.  Therefore, $\overline{\alpha}(b_x)\leq M$, and $b_0\in L^{\overline{\alpha_e}}(\mathcal{M}_e,\tau)$.

        Now, we show that $\overline{\alpha_e}(b_n-b_0)\rightarrow 0$.  We know that $\{b_n\}$ is Cauchy in $L^{\overline{\alpha}}(\mathcal{M}_e,\tau)$, so for every $n\geq 1$,
            \begin{align}
            |\tau((b_n-b_0)y)| &= \lim_{m\rightarrow \infty}|\tau((b_m-b_n)y)| \notag\\
            &\leq \limsup_{m\rightarrow\infty}\overline{\alpha_e}(b_n-b_m)\alpha_e '(y) \notag\\
            &\leq \limsup_{m\rightarrow \infty}\overline{\alpha}(b_m-b_n)\notag
            \end{align}
        Therefore, $\overline{\alpha_e}(b_n-b_0)\leq \limsup_{m\rightarrow \infty}(b_n-b_m)$ for every $n\geq 1$, and since $\{b_n\}$ is Cauchy in $L^{\overline{\alpha_e}}(\mathcal{M}_e,\tau)$, $$\overline{\alpha_e}(b_n-b_0)\rightarrow 0\text{ as }n\rightarrow\infty,$$
        and the Lemma is proven.

    \end{proof}

    Therefore $L^{\overline{\alpha_e}}(\mathcal{M}_e,\tau)$ is a Banach space with respect to $\overline{\alpha_e}$-norm.

    \begin{lemma}\label{lemma2.6}  Suppose  that $e\in\mathcal{M} $ is a projection such that $\tau(e)<\infty$.
        Suppose $\{ea_n e\}\subseteq\mathcal{I}$ is Cauchy in $\alpha$-norm, and $ea_n e$ converges in measure to 0. Then
        \begin{enumerate}
            \item[(i)]for every $\epsilon>0$, there exists a $\delta>0$ such that, if $q$ is a projection in $\mathcal{M}$ with $\tau(q)<\delta$, $|\tau(ea_n e q)|<\epsilon$ for every $n$;
            \item[(ii)] given $\delta>0$, $\epsilon>0$ and $N\in\mathbb{N}$, there exists $p_n$, a projection in $\mathcal{M}$, such that $\|ea_n ep_n\| \leq \epsilon$, and $\tau(p_n^\perp)<\delta$ for every $n\geq N$;
            \item[(iii)] for every projection $q$ in $\mathcal{I}$, $\tau(ea_n eq)\rightarrow 0$ as $n\rightarrow \infty$; and
            \item[(iv)] for every $b$ in $\mathcal{M}$, $\tau(ea_n eb)\rightarrow 0$ as $n\rightarrow \infty$.
        \end{enumerate}
    \end{lemma}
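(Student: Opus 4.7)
The overall plan is to collapse everything into an $L^1$-statement: once we know $\|ea_ne\|_1\to 0$, all four conclusions follow by elementary trace estimates, with only (i) needing any finesse.

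Because $\tau(e)<\infty$, the corner $\mathcal{M}_e=e\mathcal{M}e$ is a finite von Neumann algebra, and the locally $\|\cdot\|_1$-dominating property supplies a constant $c(e)>0$ with
\[
c(e)\,\|ea_ne-ea_me\|_1\le \alpha(ea_ne-ea_me)
\]
for all $m,n$. So the $\alpha$-Cauchy sequence $\{ea_ne\}$ is also $\|\cdot\|_1$-Cauchy and converges in $L^1(\mathcal{M}_e,\tau_e)$ to some $b$; since $L^1$-convergence implies convergence in measure, and $ea_ne\to 0$ in measure by hypothesis, uniqueness of the measure-topology limit forces $b=0$, so $\|ea_ne\|_1\to 0$. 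Claim (iv) is then immediate from the pairing $|\tau(xb)|\le \|x\|_1\|b\|_\infty$, and (iii) is the case $b=q$. Claim (ii) is essentially a restatement of the hypothesis that $ea_ne\to 0$ in the measure topology: for prescribed $\delta,\epsilon>0$ there is $N$ such that $ea_ne\in U_{\delta,\epsilon}$ for all $n\ge N$, which yields exactly the projection $p_n\in\mathcal{M}$ required.

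The only claim with genuine content is (i), which asks for uniform absolute continuity of $\{q\mapsto\tau(ea_neq)\}$ in the index $n$. Given $\epsilon>0$, use $\|ea_ne\|_1\to 0$ to choose $N$ with $\|ea_ne\|_1<\epsilon$ for $n\ge N$; then for any projection $q\in\mathcal{M}$ and any $n\ge N$, $|\tau(ea_neq)|\le \|ea_ne\|_1\|q\|_\infty<\epsilon$. For the finitely many remaining indices $n<N$, each $ea_ne$ is bounded, so by the dual Hölder pairing $|\tau(ea_neq)|\le \|ea_ne\|_\infty\|q\|_1=\|ea_ne\|_\infty\tau(q)$; choosing $\delta<\epsilon/M$ with $M=1+\max_{n<N}\|ea_ne\|_\infty$ then gives the uniform estimate. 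The point that needs vigilance is feeding the two bounds with the correct Hölder pairing — namely $L^1$ paired against $\|q\|_\infty=1$ for the tail and $L^\infty$ paired against $\|q\|_1=\tau(q)$ for the head — which is what makes the argument go through even though $\{\|ea_ne\|_\infty\}$ may fail to be uniformly bounded.
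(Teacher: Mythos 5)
Your proof is correct, but it takes a genuinely different route from the paper's. Your key move is to first establish the strong intermediate fact $\|ea_ne\|_1\to 0$: local $\|\cdot\|_1$-domination on the corner $e\mathcal{M}e$ turns the $\alpha$-Cauchy sequence into an $L^1$-Cauchy one, its $L^1$-limit $b$ is also its limit in the measure topology (Chebyshev: $d_x(\lambda)\le\|x\|_1/\lambda$), and Hausdorffness of the measure topology on $\widetilde{\mathcal{M}}$ forces $b=0$; after that, (iii) and (iv) are one-line H\"older estimates and (i) is a head/tail split pairing $L^1$ against $\|q\|_\infty$ on the tail and $L^\infty$ against $\tau(q)$ on the head. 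The paper never extracts $\|ea_ne\|_1\to 0$: it proves (i) from the $\alpha$-Cauchy hypothesis alone (so its (i) does not use convergence in measure at all), reads (ii) off the definition of the measure topology as you do, and then assembles (iii) from (i) and (ii) via the lattice estimate $\tau(q-q\wedge p_n)\le\tau(p_n^\perp)$ and the decomposition $\tau(ea_neq)=\tau(ea_ne(q-q\wedge p_n))+\tau(ea_ne(q\wedge p_n))$, finally getting (iv) by approximating $b$ with finite linear combinations of projections using a uniform $L^1$-bound on the sequence. Your version is shorter and conceptually cleaner, at the price of invoking two standard facts about the measure topology (that $L^1$-convergence implies measure convergence, and that the measure topology is Hausdorff for a faithful normal semifinite trace); both are in Nelson's paper, which the authors already cite, so you should state them explicitly with that reference. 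One small bonus of your argument worth noting: adding $1$ in $M=1+\max_{n<N}\|ea_ne\|_\infty$ avoids the division-by-zero issue that the paper's choice $\delta=\min_{k\le N_0}\{\epsilon/(2\|ea_ke\|_\infty)\}$ would have if some $ea_ke=0$.
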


    \begin{proof}
        (i) Suppose that, as above, $e\in\mathcal{M} $ is a projection such that $\tau(e)<\infty$  and $\{ea_n e\}$ s a Cauchy sequence in $\alpha$-norm.  Let $\epsilon>0$ be given. By assumption, $\alpha$ is a locally $\|\cdot\|_1$-dominating norm, so there exists $c(e)$ such that $\alpha(exe)\geq c(e)\|exe\|_1$ for every $x\in\mathcal{M}$. Then, given $\frac{\epsilon}{2}c(e)$, there exists $N_0\in\mathbb{N}$ such that for all $n,m>N_0$,
            $$\alpha(ea_n e-ea_m e)\leq \frac{\epsilon}{2}c(e).$$

        Let $\delta=\min_{k\leq N_0}\{\frac{\epsilon}{2\|ea_k e\|_\infty}\}$.  Suppose $q$ is a projection in $\mathcal{M}$ such that $\tau(q)\leq \delta$.  Then for every $k\leq N_0$, $|\tau(ea_k eq)|\leq \|ea_k e\|  \|q\|_1$ by  H\"older's Inequality, and $\tau(q)=\|q\|_1\leq \delta$.  Hence $|\tau(ea_k eq)|\leq \|ea_k e\| \delta <\epsilon/2$ for all $k\leq N_0$ by our choice of $\delta$.

        For $k>N_0$,
        \begin{align}
            |\tau(ea_k eq)|&\leq |\tau((ea_k e-ea_{N_0}e)q)|+|\tau(ea_{N_0} eq)|\notag\\
            &\leq\|ea_k e-ea_{N_0}e\|_1\|q\|  +\|ea_{N_0} e\| \|q\|_1 \tag{by H\"older's Inequality}\\
            &\leq \frac{1}{c(e)}\alpha(ea_ke - ea_{N_0}e)\|q\| +\|ea_{N_0}e\|  \delta \tag{by Definition \ref{3.1}}\\
            &<\epsilon/2+\epsilon/2=\epsilon. \notag
        \end{align}
        Hence, (i) is proven.



        (ii) Suppose that $\{ea_n e\}$ is a Cauchy sequence in $\alpha$-norm and $ea_ne\rightarrow 0$ in measure.  Then, by the definition of convergence in measure, for any $\epsilon>0$, $\delta>0$ and $N\in\mathbb{N}$, there exists $p_n$ in $\mathcal{M}$ such that $\|ea_n ep_n\|<\epsilon$ and $\tau(p_n^{\perp})<\delta$ for every $n\geq N$.

        (iii) Suppose that $\{ea_n e\}$ is a Cauchy sequence in $\alpha$-norm such that $ea_n e\rightarrow 0$ in measure.  The by (i), given $\epsilon>0$ and a projection $q$ in $\mathcal{I}$, there exists a $\delta_1>0$ such that if $\tau(q')<\delta_1$, then $|\tau(ea_n eq')|<\epsilon/2$.  Let $\delta>0$ and $\epsilon_1=\frac{\epsilon}{2\tau(q)}$.  Then by (ii), there exists  $N\in\mathbb{N}$ such that $\|ea_n ep_n\|<\epsilon_1$, and $\tau(p_n^{\perp})<\delta$ for every $n\geq N$.  Thus, for $n\geq N$ and any projection $q\in \mathcal{I}$,
            \begin{equation} \tau(ea_n eq)=\tau(ea_n e(q-q\cap p_n))+\tau(ea_n e(q\cap p_n)).\label{3.4}\end{equation}

        However, $\tau(q-q\cap p_n)=\tau(q\cup p_n-p_n)\leq \tau(p_n^\perp)<\delta$.  Therefore,
            \begin{equation} |\tau(ea_n e(q-q\cap p_n))|<\epsilon/2.\label{3.5}
            \end{equation}
        from (i). Also,
            \begin{align}
                |\tau(ea_n e(q\cap p_n))|&=|\tau(ea_n ep_n(q\cap p_n))|\notag\\
                &\leq \|ea_n ep_n\| \|q\cap p_n\|_1\notag\\
                &\leq \epsilon_1 \tau(q\cap p_n) \notag\\
                &<\epsilon_1\tau(q)=\epsilon/2. \label{3.6}
            \end{align}
        Then from equations \ref{3.4}, \ref{3.5} and \ref{3.6}, $|\tau(ea_n eq)|<\epsilon$ for any given $\epsilon>0$.  Therefore, $\tau(ea_n e)\rightarrow 0$ for every $q\in\mathcal{M}$ such that $q$ is a projection and $\tau(q)<\infty$.

        (iv) Suppose that $\{ea_n e\}$ is a Caucy sequence in $\alpha$-norm. Then there exists $M>0$ such that $\tau(ea_n e)\leq\frac{\alpha(ea_n e)}{c(e)}<\frac{M}{c(e)}$. By considering $ebe$ instead, we might assume that  $b\in\mathcal{I}$.  By the spectral decomposition theorem, $b$ can be approximated by a finite linear combination of projections $q_i$ in $\mathcal{M}$, i.e. there exist $q_i \in\mathcal{I}$ such that $\|b-\sum_{i=1}^{n} q_i\|<\epsilon \frac{c(e)}{M}$ for any given $\epsilon>0$.  Therefore,
            \begin{align}
                |\tau(ea_n eb)-\tau(ea_n e\sum_{i=1}^{n} q_i)|&=|\tau(ea_n e(b-\sum_{i=1}^{n} q_i))|\notag \\
                &\leq \|\tau(ea_n e)\|_1\|b-\sum_{i=1}^{n} q_i\| \notag\\
                &\leq \frac{M}{c(e)} \epsilon \frac{c(e)}{M}<\epsilon \notag.
            \end{align}
        Therefore, the Lemma is proven.

    \end{proof}

    \begin{proposition}
        There exists a natural embedding from $L^\alpha(\mathcal{M},\tau)$ into $\widetilde{\mathcal{M}}$.
    \end{proposition}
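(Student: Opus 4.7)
The plan is to construct $\iota: L^\alpha(\mathcal{M},\tau) \to \widetilde{\mathcal{M}}$ via Lemma \ref{lemma2.3} and then establish injectivity using Lemma \ref{lemma2.6}(iv) together with the mutual-continuity axiom (iii)(a) of Definition \ref{def3.1}. Concretely, given $x \in L^\alpha(\mathcal{M},\tau)$, pick any sequence $\{a_n\} \subseteq \mathcal{I}$ with $\alpha(a_n-x)\to 0$; by Lemma \ref{lemma2.3}, $\{a_n\}$ is Cauchy in the measure topology, and since $\widetilde{\mathcal{M}}$ is complete in that topology, $\{a_n\}$ converges to a unique $\tilde{x}\in\widetilde{\mathcal{M}}$. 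Set $\iota(x)=\tilde{x}$. Well-definedness is immediate from Lemma \ref{lemma2.3} applied to a difference sequence, linearity is clear, and the quantitative form of Lemma \ref{lemma2.3} extracted from its proof shows that $\iota$ is continuous.

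The substantive step is injectivity. Suppose $\iota(x)=0$ with approximating sequence $\{a_n\}\subseteq\mathcal{I}$, so that $a_n\to 0$ in measure. Fix a projection $e\in\mathcal{M}$ with $\tau(e)<\infty$. By Lemma \ref{lemma3.4.3} the sequence $\{ea_ne\}\subseteq \mathcal{M}_e$ is $\alpha$-Cauchy with $\alpha$-limit $exe\in L^\alpha(\mathcal{M},\tau)$, and it converges in measure to $0$. Lemma \ref{lemma2.6}(iv) then gives $\tau(ea_ne\cdot b)\to 0$ for every $b\in\mathcal{M}$. Since $\alpha$ is locally $\|\cdot\|_1$-dominating, $\|\cdot\|_1\le c(e)^{-1}\alpha_e$ on $\mathcal{M}_e$, so the functional $y\mapsto \tau(y\cdot b)$ is $\alpha_e$-continuous on $L^{\alpha_e}(\mathcal{M}_e,\tau)$. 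Passing to the limit yields $\tau(exe\cdot b)=0$ for every $b\in\mathcal{M}$. Taking the polar decomposition $exe=u|exe|$ inside $\widetilde{\mathcal{M}_e}$, with $u\in\mathcal{M}_e$ a partial isometry and $u^*u$ the support projection of $|exe|$, and inserting $b=u^*$ gives
$$
0=\tau(exe\cdot u^*)=\tau(u|exe|u^*)=\tau(u^*u|exe|)=\tau(|exe|)=\|exe\|_1,
$$
so $exe=0$ in $L^\alpha(\mathcal{M},\tau)$.

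To sweep these local vanishings up to $x=0$, I would invoke axiom (iii)(a) of Definition \ref{def3.1}. By semifiniteness of $\tau$, choose an increasing net $\{e_\lambda\}$ of projections in $\mathcal{I}$ with $e_\lambda\uparrow I$ in the weak* topology. A three-epsilon argument based on $\alpha$-density of $\mathcal{I}$ in $L^\alpha(\mathcal{M},\tau)$, together with $\|e_\lambda\|\le 1$ and Lemma \ref{lemma3.4.3}, extends (iii)(a) from $\mathcal{I}$ to all of $L^\alpha$: $\alpha(e_\lambda x e_\lambda - x)\to 0$ for every $x\in L^\alpha(\mathcal{M},\tau)$. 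Since $e_\lambda x e_\lambda=0$ in $L^\alpha$ by the previous paragraph, this forces $\alpha(x)=0$, i.e., $x=0$.

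I expect the main obstacle to be the middle step: converting the weak datum $\tau(ea_ne\cdot b)\to 0$ supplied by Lemma \ref{lemma2.6}(iv) into the strong conclusion $exe=0$ in $L^{\alpha_e}(\mathcal{M}_e,\tau)$. The local $\|\cdot\|_1$-dominating hypothesis reduces the problem to duality inside the finite von Neumann algebra $\mathcal{M}_e$, where a polar-decomposition trick closes the gap; the rest (well-definedness, continuity of $\iota$, and the mutual-continuity sweep to globalize $exe=0$) is routine bookkeeping.
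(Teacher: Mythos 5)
Your overall architecture matches the paper's: Lemma \ref{lemma2.3} produces the map, and injectivity is reduced, via Lemma \ref{lemma2.6}(iv), to showing that $\tau(exe\cdot b)=0$ for all $b\in\mathcal{M}_e$ forces $exe=0$. But the step you yourself flagged as the main obstacle is exactly where the argument breaks. The polar-decomposition computation yields $\|exe\|_1=\tau(|exe|)=0$, which says only that the \emph{image} of $exe$ under the canonical continuous map $L^{\alpha_e}(\mathcal{M}_e,\tau)\to L^1(\mathcal{M}_e,\tau)$ vanishes. To conclude that $exe=0$ as an element of the $\alpha$-completion you need that map to be injective, and that is a statement of exactly the same nature as the proposition you are proving; it does not follow from the one-sided estimate $\|\cdot\|_1\le c(e)^{-1}\alpha_e(\cdot)$. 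For a general pair of norms with $\gamma\le C\beta$ the induced map of completions $\overline{V}^{\beta}\to\overline{V}^{\gamma}$ need not be injective (take $V=C[0,1]$, $\gamma=\|\cdot\|_{L^1}$, $\beta(f)=\|f\|_{L^1}+|f(0)|$, $f_n(t)=\max(0,1-nt)$: the $f_n$ are $\beta$-Cauchy and $\gamma$-null but not $\beta$-null). So testing against the single element $b=u^*$ and measuring the outcome in the $\|\cdot\|_1$-norm cannot close the gap.

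The paper closes precisely this gap with the duality machinery of Section 3: by Lemma \ref{lemma2.2}, $\alpha_e=\overline{\alpha_e}$, i.e. $\alpha_e(y)=\sup\{|\tau(yb)|:b\in\mathcal{M}_e,\ \alpha_e'(b)\le 1\}$, so if $eae\neq 0$ in $L^{\alpha_e}(\mathcal{M}_e,\tau)$ there exists $b_0$ with $\alpha_e'(b_0)\le 1$ and $|\tau(eae\,b_0)|>\alpha_e(eae)/2>0$, contradicting $\tau(eae\,b)=0$ for all $b$. If you replace your polar-decomposition paragraph by this duality argument (equivalently, first establish injectivity of $L^{\alpha_e}(\mathcal{M}_e,\tau)\to L^1(\mathcal{M}_e,\tau)$ via Lemma \ref{lemma2.2}), the remainder of your proof --- the construction via Lemma \ref{lemma2.3}, the application of Lemma \ref{lemma2.6}(iv), and the final globalization using $\alpha(e_\lambda x e_\lambda-x)\to 0$ from condition (3a) of Definition \ref{def3.1} --- goes through and is essentially the paper's proof.
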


    \begin{proof}
        By Lemma \ref{lemma2.3}, there exists a natural mapping from $L^\alpha(\mathcal{M},\tau)$ to $\widetilde{\mathcal{M}}$.

        It suffices to show that this mapping is an injection.  Suppose that $\{a_n\}\subseteq \mathcal I$ is a Cauchy sequence in $\alpha$-norm such that $x_n\rightarrow 0$ in measure.  As $L^\alpha(\mathcal{M},\tau)$ is complete, there exists $a\in L^\alpha(\mathcal{M},\tau)$ such that $a_n\rightarrow a$ in $\alpha$-norm.  Assume that $a\neq 0$.  There exists a projection $e$ in $\mathcal{M}$ such that $\tau(e)<\infty$ and $eae\neq 0$. Thus  $\{ea_n e\}$ is Cauchy in $\alpha_e$-norm, $ea_n e\rightarrow 0$ in measure and $ea_n e\rightarrow eae\neq 0$ in $\alpha_e$-norm.  By Lemma \ref{lemma2.6}, $\tau(ea_n eb)\rightarrow 0$ for any $b\in\mathcal{M}$. As,
$|\tau(ea_neb)-\tau(eaeb)|\le \alpha_e(ea_ne-eae)\alpha'_e(b)\rightarrow 0$, we have
 $$
 \tau(eaeb)=0 \qquad \text{ for all } b\in\mathcal I.
 $$
 On the other hand,  by Lemma \ref{lemma2.2} and  definition of $\overline{\alpha_e}$, since $eae\ne 0$, there exists some $b_0 \in \mathcal{M}_e$ such that $\alpha_e'(b_0)\le 1$ and $\tau(eaeb_0)>\frac {\alpha(ea e)} 2 $.
 This is a contradiction.  Therefore, $a=0$, and the mapping is an embedding.

    \end{proof}

\section{Arveson's Non-Commutative Hardy Space}

In this section, we will extend Arveson's classical definition of a non-commutative Hardy space to $L^\alpha(\mathcal{M},\tau)$.  We assume, as before, that $\mathcal{M}$ is a von Neumann algebra with a semifinite, faithful, normal tracial weight $\tau$, and we assume that $\mathcal{A}\subseteq\mathcal{M}$ is a weak*-closed unital subalgebra of $\mathcal{M}$.  We let $\mathcal{D}=\mathcal{A}\cap \mathcal{A}^*$, and assume that $\Phi:\mathcal{M}\rightarrow \mathcal{D}$ is a faithful, normal conditional expection. Let $$\mathcal{I}=span\{xey: x,y\in\mathcal{M}, e\in\mathcal{M}, e=e^2=e^* \text{ with }\tau(e)<\infty\}$$ be the set of elementary operators of $\mathcal{M}$.

\begin{definition}\label{def2.8}
 A weak*-closed unital subalgebra $\mathcal{A}$ of $\mathcal{M}$ is called a semifinite subdiagonal subalgebra, or a semifinite non-commutative Hardy space with respect to $(\mathcal{M},\tau)$, if:
 \begin{enumerate}
    \item The restriction $\tau|_{\mathcal{D}}$ of $\tau$ to $\mathcal{D}=\mathcal{A}\cap\mathcal{A}^*$ is semifinite.
    \item $\Phi(xy)=\Phi(x)\Phi(y)$ for every $x$ and $y$ in $\mathcal{A}$.
    \item $\mathcal{A}+\mathcal{A}^*$ is weak*-dense in $\mathcal{M}$.
    \item $\Phi$ is $\tau$-preserving (i.e. $\tau(\Phi(x))=\tau(x)$ for every positive operator $x\in \mathcal{M}$).
 \end{enumerate}
 We will, in this case, denote $\mathcal{A}$ by $H^\infty$.
 \end{definition}

\begin{definition}
 Let $\alpha:\mathcal{I}\rightarrow [0,\infty)$ be a unitarily invariant, locally $\|\cdot\|_1$-dominating, mutually continuous norm with respect to $\tau$.  We denote by $H^\alpha$ the closure $[\mathcal{A}\cap L^\alpha(\mathcal{M},\tau)]_{\alpha}$ in $\alpha$-norm.
\end{definition}

\begin{remark}  Considering the conditional expectation $\Phi:\mathcal{M}\rightarrow \mathcal{D}$ from Definition \ref{def2.8}, we have that $\Phi$ extends to a projection from $L^{1}(\mathcal{M},\tau)$ to $L^{1}(\mathcal{D},\tau)$.  We still denote such an extension by $\Phi$, and we have that
$$\Phi(axb)=a\Phi(x)b, \qquad \forall a,b\in\mathcal{D}, \, x\in L^{\alpha}(\mathcal{M},\tau).$$

\end{remark}

\begin{notation}
We denote $ker(\Phi)\cap H^\infty$ by $H^{\infty}_0$, and $ker(\Phi)\cap H^{\alpha}$ by $H^{\alpha}_0$.
\end{notation}

    \begin{lemma}\label{lemma2.1}
        Suppose that $\mathcal{M}$ is a  von Neumann algebra with a semifinite, faithful, normal tracial weight $\tau$.  Let $\mathcal{A}=H^\infty$ be a semifinite subdiagonal subalgebra, as described in Definition \ref{def2.8}.  Let $e=e^*=e^2\in \mathcal{D}$ such that $\tau(e)<\infty$.  Then $eH^\infty e$, denoted $H^\infty_e$, is a Hardy space of $\mathcal{M}_e$.
    \end{lemma}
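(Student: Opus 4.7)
The plan is to verify the four defining conditions of a semifinite subdiagonal subalgebra (Definition \ref{def2.8}) for $H^\infty_e = eH^\infty e$ inside the finite von Neumann algebra $\mathcal{M}_e = e\mathcal{M}e$, equipped with the finite faithful normal tracial state $\tau_e(\cdot) = \tau(\cdot)/\tau(e)$. The natural candidate for the conditional expectation is $\Phi_e := \Phi|_{\mathcal{M}_e}$; because $e\in\mathcal{D}$, we have $\Phi(exe)=e\Phi(x)e\in e\mathcal{D}e$, so $\Phi_e$ maps $\mathcal{M}_e$ into $e\mathcal{D}e$. It is faithful and normal as a restriction of $\Phi$.

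The first step is to identify the diagonal. I claim $H^\infty_e\cap(H^\infty_e)^* = e\mathcal{D}e$. The inclusion $\supseteq$ is trivial since $\mathcal{D}\subseteq H^\infty\cap (H^\infty)^*$ and $e\in\mathcal{D}$. For $\subseteq$, note that since $e\in\mathcal{D}\subseteq H^\infty$, the compression $eH^\infty e$ actually lies inside $H^\infty$, and similarly $(eH^\infty e)^*=eH^{\infty*}e\subseteq H^{\infty*}$. Hence any $x\in eH^\infty e\cap eH^{\infty*}e$ lies in $H^\infty\cap H^{\infty*}=\mathcal{D}$, and since $x=exe$, we get $x\in e\mathcal{D}e$. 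Thus $\mathcal{D}_e:=H^\infty_e\cap(H^\infty_e)^*=e\mathcal{D}e$, and condition (1) is immediate: $\tau_e|_{\mathcal{D}_e}$ is a finite, hence semifinite, trace.

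Next, I would verify condition (4): for positive $x\in\mathcal{M}_e$,
\begin{equation*}
\tau_e(\Phi_e(x)) = \tfrac{1}{\tau(e)}\tau(\Phi(x)) = \tfrac{1}{\tau(e)}\tau(x) = \tau_e(x),
\end{equation*}
using that $\Phi$ is $\tau$-preserving. For condition (2), take $x,y\in H^\infty_e$. Since $e\in\mathcal{D}\subseteq H^\infty$, both $x=exe$ and $y=eye$ lie in $H^\infty$, so the multiplicativity of $\Phi$ on $H^\infty$ gives $\Phi_e(xy)=\Phi(xy)=\Phi(x)\Phi(y)=\Phi_e(x)\Phi_e(y)$.

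The potentially subtle step is condition (3), the weak* density of $H^\infty_e+(H^\infty_e)^*$ in $\mathcal{M}_e$. Given $y\in\mathcal{M}_e$, write $y=eye$ and pick, by assumption on $H^\infty$, a net $x_\lambda\in H^\infty+(H^\infty)^*$ with $x_\lambda\to y$ in the weak* topology of $\mathcal{M}$. Since left and right multiplication by the bounded operator $e$ are weak*-continuous (they are $\sigma$-weakly continuous), we get $ex_\lambda e\to eye=y$ weak*, and $ex_\lambda e\in eH^\infty e + eH^{\infty*}e = H^\infty_e+(H^\infty_e)^*$. Because $\mathcal{M}_e=e\mathcal{M}e$ is weak* closed in $\mathcal{M}$, the weak* topology on $\mathcal{M}_e$ agrees with the restricted one, finishing condition (3). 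Combining all four checks shows that $H^\infty_e$ is a (finite) semifinite subdiagonal subalgebra of $\mathcal{M}_e$, i.e.\ a Hardy space of $\mathcal{M}_e$.
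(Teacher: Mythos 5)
Your verification is correct, but it is worth noting that the paper does not actually prove this lemma: its entire ``proof'' is a citation to Lemma 3.1 of Bekjan's paper on noncommutative Hardy spaces associated with semifinite subdiagonal algebras. What you have written is the self-contained argument that the citation stands in for, and all four axioms are checked soundly: the module property $\Phi(exe)=e\Phi(x)e$ (valid because $e\in\mathcal{D}$) identifies the candidate expectation, the trace-preservation and multiplicativity transfer verbatim, and the compression argument for weak* density is the right one, since $x\mapsto exe$ is $\sigma$-weakly continuous and the weak* topology on $e\mathcal{M}e$ is the restricted one. The one small item you leave implicit is that $H^\infty_e$ is itself a weak*-closed unital subalgebra of $\mathcal{M}_e$, which the definition of a subdiagonal subalgebra requires before the four conditions even apply; this follows quickly from the identity $eH^\infty e=H^\infty\cap e\mathcal{M}e$ (using $e\in\mathcal{D}\subseteq H^\infty$), which exhibits it as an intersection of two weak*-closed sets containing the unit $e$ of $\mathcal{M}_e$. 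With that one-line addendum, your proof is complete and has the advantage over the paper's treatment of making the reduction to the finite case --- which is used repeatedly later, e.g.\ in Lemma \ref{lemma2.12} --- fully transparent.
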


    \begin{proof}
        See Lemma 3.1 of \cite{Be}.
    \end{proof}

    \begin{lemma}\label{lemma2.12}
        Suppose $\mathcal{M}$ is a semifinite von Neumann algebra with a semifinite, faithful, normal tracial weight $\tau$.  Let $H^\infty$ be a semifinite, subdiagonal subalgebra of $\mathcal{M}$, as described in Definition \ref{def2.8}, namely that the restriction of $\tau$ to  $\mathcal{D}=H^\infty\cap (H^\infty)^*$ is semifinite.  Let $\alpha:\mathcal{I}\rightarrow [0,\infty)$ be a unitarily invariant, locally $\|\cdot\|_1$-dominating, mutually continuous norm with respect to $\tau$.  

        Then for every $x\in L^\alpha (\mathcal{M},\tau)$  and for every $e\in\mathcal{D}$ such that $\tau(e)<\infty$, there exist $h_1, h_3 \in e H^\infty e  =H^\infty_e$ and $h_2, h_4 \in eH^\alpha e=H^\alpha_e$ such that
        \begin{enumerate}
            \item[(i)] $h_1 h_2=e=h_2 h_1$ and $h_3 h_4=e=h_4 h_3$
            \item[(ii)] $h_1 ex\in\mathcal{M}$, and $exh_3 \in \mathcal{M}$.
        \end{enumerate}
    \end{lemma}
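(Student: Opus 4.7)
The plan is to reduce to the finite von Neumann algebra $\mathcal{M}_e = e\mathcal{M}e$ with its normalized tracial state $\tau_e = \tau/\tau(e)$, in which $H^\infty_e = eH^\infty e$ is a finite subdiagonal subalgebra by Lemma \ref{lemma2.1}. The main tool will be the Arveson--Labuschagne Szeg\H{o}/outer factorization theorem for finite subdiagonal algebras.

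For the pair $(h_1,h_2)$, I consider $B = (ex)(ex)^* = exx^*e$, a positive $\tau_e$-measurable operator affiliated with $\mathcal{M}_e$ (its support lies under $e$). Set $f = e + B$; then $f\ge e$, so $f$ is invertible in $\widetilde{\mathcal{M}_e}$ with $f^{-1}\le e$ and $\log f \ge 0$. The integrability required for Szeg\H{o} is extracted from the local $\|\cdot\|_1$-domination of $\alpha$: since $|x^*e|$ has both left and right support below $e$, it lies in $eL^\alpha(\mathcal{M},\tau)e$, and
\[
\tau\bigl(|x^*e|\bigr) \le \tfrac{1}{c(e)}\,\alpha\bigl(|x^*e|\bigr) = \tfrac{1}{c(e)}\,\alpha(ex) \le \tfrac{1}{c(e)}\,\alpha(x) < \infty.
\]
Using the scalar inequality $\log(1+t)\le \sqrt{t}$ applied by functional calculus, $\log f \le B^{1/2} = |x^*e|$, so $\log f \in L^1(\mathcal{M}_e,\tau_e)$ and the Fuglede--Kadison determinant $\Delta(f)$ is finite. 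Applying Szeg\H{o}'s theorem in the ``opposite'' direction of the subdiagonality then produces an outer $g\in H^\alpha_e$, invertible in $\widetilde{\mathcal{M}_e}$, with $gg^* = f$. I then set $h_1 := g^{-1}$ and $h_2 := g$.

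I verify the four required properties as follows. First, $|g^{-1}|^2 = (gg^*)^{-1} = f^{-1}\le e$ shows $g^{-1}\in\mathcal{M}_e$ is bounded; the standard theory of outer factors (a bounded inverse of an outer element lies in $H^\infty$) then gives $h_1 \in H^\infty_e$. Second, the operator inequality $(e+B)^{1/2}\le e+|x^*e|$ (from $\sqrt{1+s^2}\le 1+s$ for $s\ge 0$) and unitary invariance give
\[
\alpha(h_2)=\alpha(g)=\alpha\bigl((e+B)^{1/2}\bigr)\le \alpha(e)+\alpha(|x^*e|) = \alpha(e)+\alpha(ex) < \infty,
\]
so $h_2 \in H^\alpha_e$. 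Third, $h_1h_2 = g^{-1}g = e = gg^{-1} = h_2h_1$. Fourth, since $g$ is invertible, $gg^*$ and $g^*g$ share spectra, so $g^*g\ge e$, whence
\[
(h_1 ex)(h_1 ex)^* = g^{-1}B(g^{-1})^* = g^{-1}(gg^* - e)(g^{-1})^* = e - (g^*g)^{-1}\le e,
\]
giving $\|h_1 ex\|\le 1$ and $h_1 ex\in\mathcal{M}$. The pair $(h_3,h_4)$ is produced by the symmetric Szeg\H{o} factorization applied to $\tilde f = e + C$ where $C = (exe)^*(exe) = ex^*exe$; the log-integrability $\tau_e(|exe|) < \infty$ again follows from local $\|\cdot\|_1$-domination applied to $|exe|\in eL^\alpha e$. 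One obtains outer $\tilde g\in H^\alpha_e$ with $\tilde g^*\tilde g = \tilde f$; setting $h_3 := \tilde g^{-1}\in H^\infty_e$ and $h_4 := \tilde g\in H^\alpha_e$, the analogous calculation $h_3^*Ch_3 = e - (\tilde g\tilde g^*)^{-1}\le e$ yields $exh_3 = (exe)h_3 \in \mathcal{M}$.

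The main obstacle I foresee is justifying Szeg\H{o}'s theorem in precisely this generality, namely for a positive $\tau_e$-measurable (possibly unbounded) operator $f\ge e$ with $\Delta(f)$ finite but $f$ itself possibly not in $L^1(\mathcal{M}_e,\tau_e)$, and for an outer factor sought in $H^\alpha_e$ rather than the more familiar $H^p_e$. This step relies on the Fuglede--Kadison-determinant form of Arveson--Labuschagne's theorem together with the standard fact that a bounded inverse of an outer element in a finite subdiagonal algebra is automatically in $H^\infty$.
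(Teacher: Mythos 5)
Your reduction to the corner $\mathcal{M}_e$ with $\tau_e=\tau/\tau(e)$ and Lemma \ref{lemma2.1} matches the paper, but the factorization step is genuinely different from the paper's, and it is exactly there that your argument has a gap — the one you flagged yourself. You apply Szeg\H{o}/outer factorization to $f=e+exx^*e=e+|x^*e|^2$. The local $\|\cdot\|_1$-domination only gives $\tau(|x^*e|)<\infty$, i.e.\ $|x^*e|\in L^1(\mathcal{M}_e,\tau_e)$; its \emph{square} need not be in $L^1(\mathcal{M}_e,\tau_e)$, so $f$ is in general only log-integrable (your bound $0\le\log f\le |x^*e|$ shows $\Delta(f)$ is finite and positive, nothing more). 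The Arveson--Labuschagne Szeg\H{o} theorem, in the forms available, produces an outer $g$ with $gg^*=f$ for $f\in L^1_+$ with $\Delta(f)>0$; for merely log-integrable $f$ you would need a genuinely stronger factorization theorem, and on top of that an argument that the outer factor lands in $H^\alpha_e=[H^\infty_e]_\alpha$ rather than just in $H^1_e$ or $H^2_e$ (your estimate $\alpha(g)=\alpha(f^{1/2})\le\alpha(e)+\alpha(|x^*e|)$ puts $g$ in $L^\alpha$, but membership in the $\alpha$-closure of $H^\infty_e$ is a further density statement). As written, the central step is therefore not justified.

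The paper avoids this entirely by working with $|x^*e|$ to the first power: it sets $w=(e+|x^*e|)^{-1}$, which is a \emph{bounded} invertible element of $\mathcal{M}_e$ whose inverse $e+|x^*e|$ lies in $L^\alpha(\mathcal{M}_e,\tau_e)$, and invokes the Riesz--Szeg\H{o}-type factorization of Chen--Hadwin--Shen (Proposition 5.2 of \cite{CHS}) to write $w=vh_1$ with $v$ unitary, $h_1\in H^\infty_e$, and $h_2=h_1^{-1}\in H^\alpha_e$. This hands over properties (i) and the memberships in $H^\infty_e$, $H^\alpha_e$ in one stroke, with no appeal to determinants, to outerness, or to the lemma that a boundedly invertible outer element has inverse in $H^\infty$. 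Boundedness of $h_1ex$ then follows from $h_1|x^*e|=v^*(e+|x^*e|)^{-1}|x^*e|$ and the scalar bound $t/(1+t)\le 1$, playing the role of your computation $(h_1ex)(h_1ex)^*=e-(g^*g)^{-1}\le e$. If you want to salvage your route, the cleanest repair is to factor $e+|x^*e|$ (which \emph{is} in $L^\alpha(\mathcal{M}_e,\tau_e)$, hence in $L^1$) rather than $e+|x^*e|^2$, or simply to quote the \cite{CHS} factorization as the paper does.
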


    \begin{proof}
        Let $ex=\sqrt{exx^*e}u=|x^* e|u$ be the polar decomposition of $(ex)^*$ in $L^\alpha(\mathcal{M},\tau)$ where $u$ is a partial isometry in $\mathcal{M}$ and $|x^* e|$ is a positive operator in $L^\alpha (\mathcal{M},\tau)$. Note that $|x^*e|$ is in $eL^\alpha(\mathcal{M},\tau)e=L^\alpha (\mathcal{M}_e, \tau)$.  Since $0<\tau(e)<\infty$, we know that $\mathcal{M}_e$ is a finite von Neumann algebra with a faithful, normal tracial state $\frac{1}{\tau(e)}\tau$.
        By Lemma \ref{lemma2.1}, we have that $H^\infty_e$ is a finite subdiagonal subalgebra of $\mathcal{M}_e$ with $[H^\infty_e]_\alpha = H^\alpha_e$.

        We have that $|x^* e| \in L^\alpha (\mathcal{M}_e, \frac{1}{\tau(e)}\tau)$, and $0<\tau(e)<\infty$.  Then $w=(e+|x^*e|)^{-1}$ is an invertible operator in $\mathcal{M}_e$ with $w^{-1}\in L^\alpha (\mathcal{M}_e, \frac{1}{\tau(e)}\tau)$.  We know that $\mathcal{M}_e$ is a finite von Neumann algebra with faithful, normal tracial state $\frac{1}{\tau(e)}\tau$, and $\alpha_e$ on $\mathcal{M}_e$ is a unitarily invariant, $\epsilon\text{-}\|\cdot\|_1$-dominating, continuous norm on $\mathcal{M}_e$.  Therefore, from Proposition 5.2 in \cite{CHS}, there exists a unitary $v$ in $\mathcal{M}_e$, $h_1\in H^\infty_e$, and $h_2\in H^\alpha_e$ such that
            \begin{enumerate}
                \item[(i)] $h_1 h_2=e=h_2 h_1$; and
                \item[(ii$_a$)] $w=vh_1$.
            \end{enumerate}
        By (ii$_a$), we get (ii$_b$) $h_1 |x^* e|=v^* w|x^* e|=v^*(e+|x^* e|)^{-1}|x^* e|\in \mathcal{M}_e\subseteq \mathcal{M}$.  Since $u_1$ is a partial isometry in $\mathcal{M}$, $h_1 ex=h_1|x^* e|u_1\in\mathcal{M}$.  Therefore, (ii) holds.

        The proof for $h_3$ and $h_4$ is similar.
    \end{proof}

The following Lemma is also helpful.

    \begin{lemma}
        Suppose $\mathcal{M}$ is a von Neumann algebra with a semifinite, faithful, normal tracial weight $\tau$.  Let $H^\infty$ be a semifinite, subdiagonal subalgebra with respect to $(\mathcal{M},\Phi)$, where $\Phi$ is a faithful, normal conditional expectation from $\mathcal{M}$ onto $\mathcal{D}=H^\infty\cap (H^\infty)^*$.

        There exists a net $\{e_\lambda\}_{\lambda\in\Lambda}$ of projections in $\mathcal{D}$ such that
        \begin{enumerate}
            \item[(i)] $e_\lambda \rightarrow I$ in the weak* topology on $\mathcal{M}$, and $\tau(e_\lambda)<\infty$ for all $\lambda\in\Lambda$.
            \item[(ii)] For every $x\in L^\alpha(\mathcal{M},\tau)$,
            \begin{equation}
                \lim_\lambda \alpha(e_\lambda x-x)=0; \quad \lim_\lambda \alpha(xe_\lambda -x)=0; \,\text{ and }\, \lim_\lambda \alpha(e_\lambda x e_\lambda-x)=0. \notag
            \end{equation}
        \end{enumerate}
    \end{lemma}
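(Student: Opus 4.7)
The plan is to first construct the net inside $\mathcal{D}$ using the semifiniteness of $\tau|_\mathcal{D}$, then transfer weak* convergence to $\alpha$-convergence via Definition \ref{def3.1}(3)(a), and finally extend from $\mathcal{I}$ to all of $L^\alpha(\mathcal{M},\tau)$ by a density argument.

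First, since $\tau|_\mathcal{D}$ is a faithful, normal, semifinite tracial weight on the von Neumann algebra $\mathcal{D}$, one can choose an upward-directed net $\{e_\lambda\}_{\lambda\in\Lambda}$ of projections in $\mathcal{D}$ with $\tau(e_\lambda)<\infty$ for every $\lambda$ and $e_\lambda\uparrow I$ strongly (equivalently, in the weak* topology). Because $\tau(e_\lambda)<\infty$, each $e_\lambda$ lies in $\mathcal{I}$, so (i) is immediate, and Lemma \ref{lemma2.0} gives that $e_\lambda x$, $xe_\lambda$ and $e_\lambda x e_\lambda$ converge to $x$ in the weak* topology for every $x\in\mathcal{M}$.

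Second, I would verify (ii) first for $x\in\mathcal{I}$. The equivalent form of Definition \ref{def3.1}(3)(a) immediately yields $\alpha(e_\lambda x-x)\to 0$ for every $x\in\mathcal{I}$. For the right-hand version, I would argue as follows: unitary invariance of $\alpha$ together with the polar decomposition (and the fact that elements of $\mathcal{I}$ are finite rank with equal-trace initial and final projections of the partial isometry, which after amplification or by passing to the reduced algebra $e\mathcal{M}e$ can be extended to a unitary) gives $\alpha(y^*)=\alpha(y)$ for $y\in\mathcal{I}$. Taking adjoints of $e_\lambda x^*-x^*$ then gives $\alpha(xe_\lambda-x)=\alpha(e_\lambda x^*-x^*)\to 0$, since $x^*\in\mathcal{I}$. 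The two-sided statement then follows from the decomposition
\[
e_\lambda x e_\lambda - x = e_\lambda(xe_\lambda-x)+(e_\lambda x-x),
\]
combined with Lemma \ref{lemma3.4.3} which gives $\alpha(e_\lambda(xe_\lambda-x))\leq \alpha(xe_\lambda-x)$.

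Third, I would extend to arbitrary $x\in L^\alpha(\mathcal{M},\tau)$ by density. Given $\varepsilon>0$, pick $y\in\mathcal{I}$ with $\alpha(x-y)<\varepsilon/3$. Then by Lemma \ref{lemma3.4.3} and $\|e_\lambda\|\leq 1$,
\[
\alpha(e_\lambda x-x)\leq \alpha(e_\lambda(x-y))+\alpha(e_\lambda y-y)+\alpha(y-x)\leq 2\alpha(x-y)+\alpha(e_\lambda y-y),
\]
so eventually $\alpha(e_\lambda x-x)<\varepsilon$. The identical argument works for $xe_\lambda-x$ and for $e_\lambda xe_\lambda-x$, using the finite-rank version established above. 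The main obstacle I anticipate is the right-multiplication step: Definition \ref{def3.1}(3)(a) is literally stated only for left multiplication, so the key technical point is justifying $\alpha(y^*)=\alpha(y)$ on $\mathcal{I}$ (or, failing that, proving the right-multiplication mutual continuity by hand using the reduced algebra $e_\lambda\mathcal{M}e_\lambda$, which is a finite factor-like setting where the Russo--Dye-type argument from Lemma \ref{lemma3.4.3} applies symmetrically on both sides).
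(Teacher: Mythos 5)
Your proposal is correct, and its skeleton (build the net from semifiniteness of $\tau|_{\mathcal D}$, invoke Definition \ref{def3.1}(3)(a), extend by density) matches the paper's. But the two proofs differ in where they put the work, and yours is the more complete of the two. For part (i), the paper cites Lemma 2.2 of \cite{Sag} and then transfers weak* convergence from $\mathcal D$ to $\mathcal M$ via the trace-preserving conditional expectation, computing $\tau(e_\lambda y-y)=\tau(e_\lambda\Phi(y)-\Phi(y))\to 0$ for $y\in L^1(\mathcal M,\tau)$; you instead use that the identity of $\mathcal D$ is the identity of $\mathcal M$ and that an increasing net of projections converges strongly (hence weak*, being bounded) to its supremum. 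Both are valid. For part (ii) the paper simply writes that the conclusion follows ``from (i) and Definition \ref{def3.1},'' which silently skips exactly the two points you flag: Definition \ref{def3.1}(3)(a) only gives left multiplication on $\mathcal I$, and $L^\alpha(\mathcal M,\tau)$ is strictly larger than $\mathcal I$. Your treatment of both is sound: for $y\in\mathcal I$ one can pick a finite-trace projection $e$ with $y=eye$, extend the partial isometry of the polar decomposition to a unitary inside the finite algebra $e\mathcal Me$ (and then to all of $\mathcal M$ by adding $I-e$), so unitary invariance does give $\alpha(y^*)=\alpha(|y^*|)=\alpha(u|y|u^*)=\alpha(|y|)=\alpha(y)$, whence $\alpha(xe_\lambda-x)=\alpha(e_\lambda x^*-x^*)\to 0$; your identity $e_\lambda xe_\lambda-x=e_\lambda(xe_\lambda-x)+(e_\lambda x-x)$ together with Lemma \ref{lemma3.4.3} handles the two-sided case; and the $\varepsilon/3$ argument with Lemma \ref{lemma3.4.3} handles the passage to general $x\in L^\alpha(\mathcal M,\tau)$. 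In short, you prove what the paper asserts; the only thing the paper's route buys is brevity via the external reference and the expectation $\Phi$, while yours buys a self-contained justification of the right-multiplication and density steps that the paper leaves implicit.
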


    \begin{proof}
        We know that $H^\infty$ is a semifinite subdiagonal subalgebra of $\mathcal{M}$, therefore the restriction of $\tau$ to $\mathcal{D}$ is semifinite.  From Lemma 2.2 in \cite{Sag}, there exists a net of projections $\{e_\lambda\}_{\lambda\in\Lambda}$ in $\mathcal{D}$ such that $e_\lambda\rightarrow I$ in the weak* topology on $\mathcal{D}$, and $\tau(e_\lambda)<\infty$ for all $\lambda\in\Lambda$. Therefore,
            $$\lim_\lambda |\tau(e_\lambda z-z)|=0 \text{ for every }z\in L^1(\mathcal{D},\tau).$$
        Also, for each $y$ in $L^1(\mathcal{M},\tau)$, we have that
            $$\lim_\lambda |\tau(e_\lambda y-y)|=\lim_\lambda |\tau(\Phi(e_\lambda y-y))|=\lim_\lambda |\tau(e_\lambda \Phi(y)-\Phi(y))|=0.$$
        Namely, $e_\lambda \rightarrow I$ in the weak* topology on $\mathcal{M}$, and $\tau(e_\lambda)<\infty$ for every $\lambda\in\Lambda$.  (i) is satisfied.

        Then from (i) and Definition \ref{def3.1}, we may conclude that (ii) holds. Namely, for every $x\in L^\alpha(\mathcal{M},\tau)$,
            $$\lim_\lambda \alpha(e_\lambda x-x)=0;\quad \lim_\lambda \alpha(xe_\lambda -x)=0; \,\text{ and }\, \lim_\lambda \alpha(e_\lambda x e_\lambda -x)=0.$$

        Therefore, the Lemma is proven.
    \end{proof}
Finally, we recall the definition of a row sum of subspaces of $L^\alpha(\mathcal{M},\tau)$.

    \begin{definition}\label{def4.7}
        Let $\mathcal{M}$ be a von Neumann algebra with a semifinite, normal, faithful tracial weight $\tau$.  Suppose $X$ is a closed subspace of $L^\alpha(\mathcal{M},\tau)$, and $\{X_i\}_{i\in\mathcal{I}}$ are closed subspaces of $L^\alpha(\mathcal{M},\tau)$.  If
        \begin{enumerate}
            \item $X_j X_i^*=\{0\}$ for every $i,j\in\mathcal{I}$, $i\neq j$; and
            \item $X=[span\{X_i:i\in\mathcal{I}\}]_\alpha$,
        \end{enumerate}
        we call $X$ the internal row sum of $\{X_i\}_{i\in\mathcal{I}}$, and denote it by $X=\oplus^{row}_{i\in\mathcal{I}} X_i$.   Also, we denote $span\{X_i:i\in\mathcal{I}\}$ by $\sum_{i\in\mathcal{I}} X_i$.
    \end{definition}

\section{Beurling Theorem for Semifinite Hardy Spaces with Norm $\alpha$}

    \begin{theorem}\label{theorem3.1}
        Let $\mathcal{M}$ be a von Neumann algebra with a faithful, normal semifinite tracial weight $\tau$, and $H^\infty$ be a semifinite subdiagonal subalgebra of $\mathcal{M}$.  Let $\alpha$ be a  unitarily invariant, locally $\|\cdot\|_1$-dominating, mutually continuous norm with respect to $\tau$.  Let $\mathcal{D}=H^\infty\cap (H^\infty)^*$.  Assume that $\mathcal{K}$ is a closed subspace of $L^\alpha(\mathcal{M},\tau)$ such that $H^\infty \mathcal{K}\subseteq \mathcal{K}$.  

        Then, there exist a closed subspace $Y$ of $L^\alpha(\mathcal{M},\tau)$ and a family $\{u_\lambda\}$ of partial isometries in $\mathcal{M}$ such that
            \begin{enumerate}
                \item[(i)] $u_\lambda Y^*=0$ for every $\lambda\in\Lambda$;\
                \item[(ii)] $u_\lambda u_\lambda^*\in\mathcal{D}$, and $u_\lambda u_\mu^*=0$ for every $\lambda, \mu\in \Lambda$ with $\lambda\neq\mu$;
                \item[(iii)] $Y=[H^\infty_0 Y]_\alpha$;
                \item[(iv)] $\mathcal{K}=Y\oplus^{row}(\oplus^{row}_{\lambda\in\Lambda} H^\alpha u_\lambda)$.
            \end{enumerate}
    \end{theorem}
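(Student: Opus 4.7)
The plan is to reduce Theorem \ref{theorem3.1} to a structural statement about weak*-closed, $H^\infty$-invariant subspaces of $\mathcal{M}$ via Lemmas \ref{lemma3.2}, \ref{lemma3.3}, \ref{lemma3.4}, and then to extract the partial isometries $\{u_\lambda\}$ via a Zorn/wandering-subspace argument that parallels the finite case of \cite{CHS} and Sager's $L^p$-case \cite{Sag}.

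First I would set $\mathcal{W} = \overline{\mathcal{K} \cap \mathcal{M}}^{w^*}$. Since left multiplication by any $a \in H^\infty$ is weak*-continuous on $\mathcal{M}$ and $H^\infty(\mathcal{K} \cap \mathcal{M}) \subseteq \mathcal{K} \cap \mathcal{M}$, the subspace $\mathcal{W}$ is weak*-closed and $H^\infty$-invariant. Lemma \ref{lemma3.2}(2) gives $\mathcal{K} = [\mathcal{K} \cap \mathcal{M}]_\alpha$, while Lemma \ref{lemma3.4} applied with $S = \mathcal{K} \cap \mathcal{M}$ yields $[\mathcal{K} \cap \mathcal{M}]_\alpha = [\mathcal{W} \cap L^\alpha(\mathcal{M},\tau)]_\alpha$. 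Thus, once the weak*-closed $\mathcal{W}$ is decomposed, intersecting with $L^\alpha(\mathcal{M},\tau)$ and taking $\alpha$-closures will automatically recover the analogous decomposition of $\mathcal{K}$.

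To decompose $\mathcal{W}$, I would consider the collection $\mathcal{F}$ of families $\{v_i\}_{i \in I}$ of partial isometries with $v_i \in \mathcal{W} \cap \mathcal{M}$, $v_i v_i^* \in \mathcal{D}$ of finite trace, $v_i v_j^* = 0$ for $i \neq j$, and $v_i$ lying in the wandering complement, in the sense that $\tau(\Phi(h_0 v_i^*)) = 0$ for every $h_0 \in H^\infty_0 \mathcal{W}$ of finite support. Ordered by inclusion, $\mathcal{F}$ has a maximal element $\{u_\lambda\}_{\lambda \in \Lambda}$ by Zorn's lemma. Let $\mathcal{X} = \overline{\sum_\lambda H^\infty u_\lambda}^{w^*}$, and let $Y_0$ be the ``Beurling-type'' complementary piece, defined so that $\mathcal{W} \cap L^\alpha = [Y_0 \cap L^\alpha]_\alpha \oplus^{row} \bigl( \oplus^{row}_\lambda [H^\alpha u_\lambda]_\alpha \bigr)$ with $Y_0 = \overline{H^\infty_0 Y_0}^{w^*}$. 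Setting $Y = [Y_0 \cap L^\alpha(\mathcal{M},\tau)]_\alpha$, property (ii) is immediate from the construction, and (i) $u_\lambda Y^* = 0$ follows because $Y_0$ is orthogonal to every $H^\infty u_\mu$ under the $\tau\circ\Phi$-pairing and this orthogonality transports to $L^\alpha$ via the duality of Lemmas \ref{lemma2.4} and \ref{lemma3.11.2}.

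The main obstacle will be verifying (iii) $Y = [H^\infty_0 Y]_\alpha$ and the maximality argument showing no further wandering partial isometry can be extracted from $Y_0$. Because elements of $Y \subseteq L^\alpha$ need not be bounded, I would use Lemma \ref{lemma2.12} (Douglas-type factorization inside the corner $\mathcal{M}_e = e\mathcal{M}e$ for $e \in \mathcal{D}$ with $\tau(e) < \infty$) to reduce to bounded elements inside a finite von Neumann algebra, polar-decompose there, and replicate the CHS extraction in each corner; the compatibility of corners is then handled by passing $e \uparrow I$ along a net of $\mathcal{D}$-projections of finite trace. Once (iii) is established, the row-sum identity (iv) $\mathcal{K} = Y \oplus^{row} (\oplus^{row}_\lambda H^\alpha u_\lambda)$ follows by taking $\alpha$-closures of $\mathcal{W} \cap L^\alpha(\mathcal{M},\tau)$ and invoking Lemmas \ref{lemma3.2}, \ref{lemma3.3}, \ref{lemma3.4} to exchange weak*-closures with $\alpha$-closures.
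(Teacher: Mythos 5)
Your reduction step is exactly the paper's route: the paper also sets $\mathcal{K}_1=\overline{\mathcal{K}\cap\mathcal{M}}^{w^*}$, uses Lemma \ref{lemma3.2}(ii) to write $\mathcal{K}=[\mathcal{K}\cap\mathcal{M}]_\alpha$, and then uses Lemmas \ref{lemma3.3} and \ref{lemma3.4} to exchange weak*-closures with $\alpha$-closures when pushing the decomposition of $\mathcal{K}_1$ down to $Y=[Y_1\cap L^\alpha(\mathcal{M},\tau)]_\alpha$ and to the row sum in (iv). That part of your plan is sound and matches the paper essentially verbatim.

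The gap is in how you decompose the weak*-closed space $\mathcal{W}$. The paper does not re-derive this: it simply invokes Theorem 4.5 of \cite{Sag}, which is precisely the Beurling--Blecher--Labuschagne theorem for weak*-closed $H^\infty$-invariant subspaces of a semifinite $\mathcal{M}$, and obtains $Y_1$ and $\{u_\lambda\}$ with properties (a)--(d) in one stroke. You instead propose to rebuild that statement by a Zorn's lemma argument, but your construction of the complementary piece is circular: you ``define $Y_0$ so that $\mathcal{W}\cap L^\alpha=[Y_0\cap L^\alpha]_\alpha\oplus^{row}(\oplus^{row}_\lambda[H^\alpha u_\lambda]_\alpha)$ with $Y_0=\overline{H^\infty_0 Y_0}^{w^*}$,'' which presupposes exactly the decomposition to be proved. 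The genuine content of the weak*-closed theorem --- that the maximal wandering family $\{u_\lambda\}$ together with a $Y_1$ satisfying $Y_1=\overline{H^\infty_0Y_1}^{w^*}$ exhausts $\mathcal{W}$, and that $u_\lambda Y_1^*=0$ --- is exactly what your sketch defers to ``the main obstacle'' and never supplies. To close the gap you should either cite Theorem 4.5 of \cite{Sag} directly (as the paper does), or give a non-circular definition of $Y_0$ (e.g.\ via the orthogonal complement of $\sum_\lambda H^2u_\lambda$ in the $L^2$-picture of the corners $e\mathcal{M}e$, followed by the compatibility argument over the net $e\uparrow I$) and actually carry out the maximality/exhaustion argument. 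As written, the proposal reduces the theorem correctly but leaves its hardest ingredient unproved.
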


        First, we prove some lemmas.

    \begin{lemma}\label{lemma3.2}
        Suppose $\mathcal{M}$ is a von Neumann algebra with a faithful, normal, semifinite tracial weight $\tau$, and that $H^\infty$ is a semifinite, subdiagonal subalgebra of $\mathcal{M}$.  Suppose also that $\alpha$ is a unitarily invariant, locally $\|\cdot\|_1$-dominating, mutually continuous norm with respect to $\tau$.  Assume that $\mathcal{K}$ is a closed subspace of $L^\alpha (\mathcal{M},\tau)$ such that $H^\infty \mathcal{K}\subseteq\mathcal{K}$. Then the following hold:
        \begin{enumerate}
            \item[(i)] $\mathcal{K}\cap \mathcal{M} = \overline{\mathcal{K}\cap\mathcal{M}}^{w^*} \cap L^\alpha(\mathcal{M},\tau)$
            \item[(ii)] $\mathcal{K}=[\mathcal{K}\cap \mathcal{M}]_\alpha$
        \end{enumerate}
    \end{lemma}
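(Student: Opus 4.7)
My strategy is to prove (ii) first by a direct factorization argument, then deduce (i) from (ii) together with Lemma~\ref{lemma3.4} applied to the $H^\infty$-invariant set $S=\mathcal K\cap\mathcal M$.

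For (ii), the inclusion $[\mathcal K\cap\mathcal M]_\alpha\subseteq\mathcal K$ is immediate from the $\alpha$-closedness of $\mathcal K$. For the reverse, I would fix $x\in\mathcal K$ and invoke the net $\{e_\lambda\}\subseteq\mathcal D$ of projections with $\tau(e_\lambda)<\infty$, $e_\lambda\to I$ in the weak* topology and $\alpha(e_\lambda x-x)\to 0$, provided by the lemma preceding Definition~\ref{def4.7}. Since $e_\lambda\in\mathcal D\subseteq H^\infty$, we have $e_\lambda x\in\mathcal K$. Applying Lemma~\ref{lemma2.12} to the pair $(x,e_\lambda)$ furnishes $h_1\in H^\infty_{e_\lambda}$ and $h_2\in H^\alpha_{e_\lambda}$ with $h_1 h_2=h_2 h_1=e_\lambda$ and $h_1 x=h_1 e_\lambda x\in\mathcal M$; since $h_1\in H^\infty$ and $\mathcal K$ is $H^\infty$-invariant, $h_1 x\in\mathcal K\cap\mathcal M$. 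Approximating $h_2$ in $\alpha$-norm by $k_n\in H^\infty_{e_\lambda}$ (available because $H^\alpha_{e_\lambda}=[H^\infty_{e_\lambda}]_\alpha$), Lemma~\ref{lemma3.4.3} combined with $\|h_1 x\|<\infty$ gives
\[
\alpha(k_n h_1 x-h_2 h_1 x)=\alpha\bigl((k_n-h_2)h_1 x\bigr)\le \alpha(k_n-h_2)\,\|h_1 x\|\longrightarrow 0.
\]
Each $k_n h_1 x\in\mathcal K\cap\mathcal M$, so $e_\lambda x=h_2 h_1 x\in [\mathcal K\cap\mathcal M]_\alpha$; combined with $\alpha(e_\lambda x-x)\to 0$ and the $\alpha$-closedness of $[\mathcal K\cap\mathcal M]_\alpha$, this gives $x\in[\mathcal K\cap\mathcal M]_\alpha$.

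For (i), only $\overline{\mathcal K\cap\mathcal M}^{w^*}\cap L^\alpha\subseteq\mathcal K\cap\mathcal M$ requires work. Setting $S:=\mathcal K\cap\mathcal M$, we have $H^\infty S\subseteq S$ (since $H^\infty\mathcal K\subseteq \mathcal K$ and $H^\infty\mathcal M\subseteq\mathcal M$), and $S\subseteq\mathcal K\subseteq L^\alpha$ makes $S\cap L^\alpha=S$. Lemma~\ref{lemma3.4} therefore produces
\[
[\mathcal K\cap\mathcal M]_\alpha=[S\cap L^\alpha]_\alpha=[\overline{S}^{w^*}\cap L^\alpha]_\alpha=[\overline{\mathcal K\cap\mathcal M}^{w^*}\cap L^\alpha]_\alpha,
\]
and the leftmost term equals $\mathcal K$ by (ii). Hence $\overline{\mathcal K\cap\mathcal M}^{w^*}\cap L^\alpha\subseteq\mathcal K$. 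Any $x$ in the left-hand side lies in $\mathcal M$ as a weak* limit of bounded operators, so $x\in\mathcal K\cap\mathcal M$, completing (i).

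\textbf{Main obstacle.} The lemma itself collapses to short manipulations once Lemma~\ref{lemma2.12} and Lemma~\ref{lemma3.4} are available. The real work in (ii) lies in Lemma~\ref{lemma2.12}: the Arveson-type factorization promotes the arbitrary $L^\alpha$-element $x$ to a bounded $h_1 x\in\mathcal M$, without which the bound $\alpha(k_n h_1 x-h_2 h_1 x)\le \alpha(k_n-h_2)\|h_1 x\|$ would not yield $\alpha$-convergence. The real work in (i) is concealed inside Lemma~\ref{lemma3.4}, whose own proof must confront the right-multiplication obstruction (since $\mathcal K$ is only one-sidedly $H^\infty$-invariant) by means of bilateral factorization and a Mazur-type convex-combination argument inside finite-trace corners $e\mathcal M e$. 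The only technicalities to verify within the present argument are the identities $h_1 e_\lambda=h_1$ (from $h_1\in e_\lambda H^\infty e_\lambda$) and the applicability of the module inequality in Lemma~\ref{lemma3.4.3}.
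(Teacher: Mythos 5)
Your argument for part (ii) is essentially the paper's own proof of that part, just phrased directly rather than by contradiction: the same net $\{e_\lambda\}$, the same appeal to Lemma~\ref{lemma2.12} to produce $h_1,h_2$ with $h_1h_2=h_2h_1=e_\lambda$ and $h_1e_\lambda x\in\mathcal M$, the same approximation of $h_2$ by elements of $H^\infty$ and the same use of the module inequality of Lemma~\ref{lemma3.4.3}. That half is fine.

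Part (i), however, has a genuine circularity. You deduce (i) from Lemma~\ref{lemma3.4} applied to $S=\mathcal K\cap\mathcal M$, but in this paper Lemma~\ref{lemma3.4} is proved \emph{after} Lemma~\ref{lemma3.2} and its proof invokes Lemma~\ref{lemma3.2} twice --- once to place $[S\cap L^\alpha(\mathcal M,\tau)]_\alpha\cap\mathcal M$ inside the weak* closure, and once in the form $\overline{[S\cap L^\alpha(\mathcal M,\tau)]_\alpha\cap\mathcal M}^{w^*}\cap L^\alpha(\mathcal M,\tau)=[S\cap L^\alpha(\mathcal M,\tau)]_\alpha\cap\mathcal M$, which is precisely statement (i) for the $H^\infty$-invariant closed subspace $[S\cap L^\alpha(\mathcal M,\tau)]_\alpha$. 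Your remark that ``the real work in (i) is concealed inside Lemma~\ref{lemma3.4}'' has the dependency exactly backwards: Lemma~\ref{lemma3.4} conceals its work inside Lemma~\ref{lemma3.2}(i). Unless you supply an independent proof of Lemma~\ref{lemma3.4} (you do not; you only gesture at a ``Mazur-type'' argument that is not how the paper proves it), part (i) remains unproved. What the paper actually does for (i) is a separation argument: assuming $x\in\overline{\mathcal K\cap\mathcal M}^{w^*}\cap L^\alpha(\mathcal M,\tau)$ but $x\notin\mathcal K\cap\mathcal M$, it takes $\varphi\in L^\alpha(\mathcal M,\tau)^\#$ annihilating $\mathcal K\cap\mathcal M$ with $\varphi(x)\neq 0$, passes to $\psi(z)=\varphi(e_\lambda z)$, shows $\psi$ is a \emph{normal} functional on $\mathcal M$ using the mutual continuity condition (3a) of Definition~\ref{def3.1}, represents it as $\psi(z)=\tau(z\xi)$ with $\xi\in L^1(\mathcal M,\tau)$, and then contradicts the weak* convergence $\tau(y_\mu\xi e)\rightarrow\tau(x\xi e)$ of a net $y_\mu\in\mathcal K\cap\mathcal M$ converging weak* to $x$. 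Some such duality step is needed to pass from weak* approximation to membership in $\mathcal K$, and your proposal omits it.
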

    \begin{proof}
    {}  (i)
        It is clear that
            $$\mathcal{K}\cap\mathcal{M}\subseteq \overline{\mathcal{K}\cap\mathcal{M}}^{w^*}\cap L^\alpha(\mathcal{M},\tau).$$

        We will prove that
            $$\mathcal{K}\cap \mathcal{M}=\overline{\mathcal{K}\cap \mathcal{M}}^{w^*}\cap L^\alpha(\mathcal{M},\tau).$$
        Assume, to the contrary, that $\mathcal{K}\cap\mathcal{M}\subsetneqq \overline{\mathcal{K}\cap\mathcal{M}}^{w^*}\cap L^\alpha(\mathcal{M},\tau)$.
        Then there exists an $x\in  \overline{\mathcal{K}\cap\mathcal{M}}^{w^*}\cap L^\alpha(\mathcal{M},\tau)$, with $x\notin \mathcal{K}\cap\mathcal{M}$. By the Hahn-Banach theorem, there exists a $\varphi\in L^\alpha(\mathcal{M},\tau)^\#$ such that $\varphi(x)\neq 0$, and $\varphi(y)=0$ for every $y\in \mathcal{K}\cap\mathcal{M}$.

        Since the restriction of $\tau$ to $\mathcal{D}=H^\infty\cap (H^\infty)^*$ is semifinite, there exists a family $\{e_\lambda\}$ of projections in $\mathcal{D}$ such that $\tau(e_\lambda)<\infty$ for every $\lambda$, and $e_\lambda\rightarrow I$ in the weak* topology.  This implies that $e_\lambda x\rightarrow x$ in the weak* topology and in $\alpha$-norm by condition (3a) of definition \ref{def3.1}.

        Thus, there must exist a $\lambda$ such that $e_\lambda x\notin \mathcal{K}\cap \mathcal{M}$.  Also, $e_\lambda x \in e_\lambda L^\alpha(\mathcal{M},\tau)$.

        Define $\psi: \mathcal{M}\rightarrow \mathbb{C}$ by $\psi(z)=\varphi(e_\lambda z)$ for every $z\in\mathcal{M}$.  Then $\psi$ is a bounded linear functional.  We will show that $\psi$ is normal, i.e. for an increasing net ${f_\mu}$ of projections in $\mathcal{M}$ such that $f_\mu \rightarrow I$ in weak$^*$-topology, then $\psi(f_\mu)\rightarrow \psi(I)$.  By condition (3a) of Defintion \ref{def3.1}, we get that $\alpha(e_\lambda f_\mu - e_\lambda I)\rightarrow 0$, for a fixed $\lambda$.
        Since $\varphi\in L^\alpha (\mathcal{M},\tau)^\#$, $\varphi(e_\lambda f_\mu)\rightarrow \varphi(e_\lambda I)$.  However
            $$\varphi(e_\lambda f_\mu)=\psi(f_\mu),$$
        and $\varphi(e_\lambda I)=\psi(I)$.  Thus, $\psi(f_\mu)\rightarrow \psi(I)$.  Therefore, $\psi$ is a normal, bounded linear functional, namely, $\psi\in L^{1}(\mathcal{M},\tau)$.

        There exists a $\xi\in L^1(\mathcal{M},\tau)$ such that $\psi(z)=\tau(z\xi)$ for every $z\in\mathcal{M}$.  Note that $\psi(x)=\varphi(e_\lambda x)=\tau(x\xi)\neq 0$.  Thus, there exists a projection $e\in\mathcal{D}$ such that $\tau(e)<\infty$ so that $\psi(ex)=\varphi(e_\lambda ex)=\tau(ex\xi)\neq 0$, and $\psi(ey)=\varphi(e_\lambda ey)=\tau(ey\xi)=0$ for every $y\in \mathcal{K}\cap\mathcal{M}$.

        Recall that $x\in\overline{\mathcal{K}\cap\mathcal{M}}^{w^*}$.  Therefore, there exists a sequence $\{y_\mu\}$ in $\mathcal{K}\cap\mathcal{M}$ such that $y_\mu \rightarrow x$ in the weak* topology. Note that  $\xi e \in L^1(\mathcal{M},\tau)$.  Hence,
            $$\tau(y_\mu \xi e)\rightarrow \tau(x\xi e).$$
        However, $\tau(y_\mu \xi e)=0$, so $\tau(x \xi e)=0$, which is a contradiction. Therefore (i) is proven.

        (ii) Clearly, $\mathcal{K}\cap \mathcal{M}\subseteq \mathcal{K}$, and $\mathcal{K}$ is $\alpha$-norm closed, so
            $$[\mathcal{K}\cap\mathcal{M}]_\alpha \subseteq \mathcal{K}.$$

        We will show that
            $$\mathcal{K}=[\mathcal{K}\cap\mathcal{M}]_\alpha.$$
        Suppose to the contrary, that $[\mathcal{K}\cap \mathcal{M}]_\alpha\subsetneqq\mathcal{K}$.  There exists an $x\in \mathcal{K}$ such that $x\notin [\mathcal{K}\cap\mathcal{M}]_\alpha$.  We know that $\mathcal{D}$ is semifinite, so there exists a family of projections $\{e_\lambda\}_{\lambda\in\Lambda}$ such that $\tau(e_\lambda)<\infty$, and $e_\lambda \rightarrow I$ in the weak-* topology.  By Definition \ref{def3.1}, part (3a), $e_\lambda x\rightarrow x$ in $\alpha$-norm.  So, there exists $\lambda$ such that $e_\lambda x\in \mathcal{K}$, since $x\in \mathcal{K}$, and $e_\lambda x\notin [\mathcal{K}\cap \mathcal{M}]_\alpha$, as $x\notin [\mathcal{K}\cap\mathcal{M}]_\alpha$.

        By Lemma \ref{lemma2.12}, there exist an $h_1\in e_\lambda H^\infty e_\lambda$ and an $h_2\in e_\lambda H^\alpha e_\lambda$ such that $h_1 e_\lambda x \in \mathcal{M}$, and $h_1 h_2 =e_\lambda = h_2 h_1$.  Thus, $e_\lambda x = h_2 h_1 e_\lambda x$, $h_1 e_\lambda x\in\mathcal{M}$, and $h_1 e_\lambda x\in \mathcal{K}$, since $H^\infty \mathcal{K}\subseteq \mathcal{K}$.  Also, $h_2\in e_\lambda H^\alpha e_\lambda$, so there exists a sequence $\{a_n\}$ in $H^\infty$ such that $a_n\rightarrow h_2$ in $\alpha$-norm.  Hence, $e_\lambda x=h_2 h_1 ex$,  $a_n h_1 e_\lambda x \in K\cap \mathcal{M}$, and
            $$a_n h_1 e_\lambda x\rightarrow h_2 h_1 ex$$
        in $\alpha$-norm.

        Therefore, $e_\lambda x\in[\mathcal{K}\cap\mathcal{M}]_\alpha$, which is a contradiction.  Thus, (ii) is proven.
    \end{proof}

    \begin{lemma}\label{lemma3.3}
        Suppose $\mathcal{M}$ is a  von Neumann algebra with a faithful, normal, semifinite tracial weight $\tau$, and suppose that $\alpha$ is a unitarily invariant, locally $\|\cdot\|_1$-dominating, mutually continuous norm with respect to $\tau$.  Let $H^\infty$ be a semifinite, subdiagonal subalgebra of $\mathcal{M}$.  Assume that $\mathcal{K}$ is a weak* closed subspace of $\mathcal{M}$ such that $H^\infty \mathcal{K}\subseteq \mathcal{K}$. Then
            $$\mathcal{K}=\overline{[\mathcal{K}\cap L^\alpha(\mathcal{M},\tau)]_\alpha \cap \mathcal{M}}^{w^*}$$.
    \end{lemma}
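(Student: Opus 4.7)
The plan is to prove the two inclusions separately. The forward inclusion \(\supseteq\) is routine from the semifiniteness of \(\tau|_{\mathcal{D}}\), while the reverse inclusion \(\subseteq\) requires a Hahn--Banach argument patterned on the proof of Lemma \ref{lemma3.2}(i), adapted to pair \(\alpha\)-convergence against an \(L^1\)-functional by means of the factorization from Lemma \ref{lemma2.12}.

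For \(\supseteq\), I would invoke semifiniteness of \(\tau|_{\mathcal{D}}\) to obtain a net \(\{e_\lambda\}\subseteq\mathcal{D}\) of projections with \(\tau(e_\lambda)<\infty\) and \(e_\lambda\to I\) weak*. For \(x\in\mathcal{K}\), the element \(e_\lambda x\) sits in \(H^\infty\mathcal{K}\subseteq\mathcal{K}\) and in \(\mathcal{I}\subseteq L^\alpha(\mathcal{M},\tau)\), so it belongs to \(\mathcal{K}\cap L^\alpha\subseteq[\mathcal{K}\cap L^\alpha]_\alpha\) and to \(\mathcal{M}\). Lemma \ref{lemma2.0} then gives \(e_\lambda x\to x\) weak*, placing \(x\) in \(\overline{[\mathcal{K}\cap L^\alpha]_\alpha\cap\mathcal{M}}^{w^*}\).

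For \(\subseteq\), since \(\mathcal{K}\) is weak*-closed it suffices to show \([\mathcal{K}\cap L^\alpha]_\alpha\cap\mathcal{M}\subseteq\mathcal{K}\). Arguing by contradiction, suppose \(z\in[\mathcal{K}\cap L^\alpha]_\alpha\cap\mathcal{M}\setminus\mathcal{K}\). By Hahn--Banach in the weak* topology there is \(\varphi\in\mathcal{M}_\#\cong L^1(\mathcal{M},\tau)\), represented by some \(\xi\in L^1\) via \(\varphi(w)=\tau(w\xi)\), with \(\varphi(z)\neq 0\) and \(\varphi|_\mathcal{K}=0\). Since \(e_\lambda z\to z\) weak*, fix \(\lambda_0\) with \(\varphi(e_{\lambda_0}z)\neq 0\) and set \(\psi(w):=\varphi(e_{\lambda_0}w)\); because \(e_{\lambda_0}\in\mathcal{D}\subseteq H^\infty\) and \(H^\infty\mathcal{K}\subseteq\mathcal{K}\), one still has \(\psi|_\mathcal{K}=0\), and the \(L^1\)-representative of \(\psi\) is \(\xi e_{\lambda_0}\), whose right support is contained in the finite-trace projection \(e_{\lambda_0}\). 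Picking \(z_n\in\mathcal{K}\cap L^\alpha\) with \(\alpha(z_n-z)\to 0\), one has \(\psi(z_n)=0\), so the contradiction will follow from showing \(\tau((z_n-z)\xi e_{\lambda_0})\to 0\).

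The main obstacle is that \(\alpha\)-convergence of \(z_n\) to \(z\) does not give a uniform bound on \(\|z_n\|_\infty\), so H\"older against \(\xi e_{\lambda_0}\in L^1\) is not immediately usable. My strategy to get around this is to apply Lemma \ref{lemma2.12} to \(z\) and \(e_{\lambda_0}\), producing \(h_1\in e_{\lambda_0}H^\infty e_{\lambda_0}\) and \(h_2\in e_{\lambda_0}H^\alpha e_{\lambda_0}\) with \(h_1h_2=e_{\lambda_0}=h_2h_1\) and \(h_1 e_{\lambda_0}z\in\mathcal{M}\) of bounded norm, and then to approximate \(h_2\) in \(\alpha\)-norm by elements \(a_k\in e_{\lambda_0}H^\infty e_{\lambda_0}\); the products \(a_k h_1 e_{\lambda_0}z_n\) lie in \(H^\infty\mathcal{K}\subseteq\mathcal{K}\), are bounded in \(\mathcal{M}\), and converge in \(\alpha\)-norm to \(a_k h_1 e_{\lambda_0}z\in\mathcal{K}\cap\mathcal{M}\) as \(n\to\infty\). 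After further compressing by a finite-trace projection \(e_\mu\in\mathcal{D}\) with \(e_\mu\geq e_{\lambda_0}\), and using trace cyclicity together with \(e_\mu e_\mu^\perp=0\) to discard off-diagonal contributions, the pairing reduces to one inside the finite corner \(\mathcal{M}_{e_\mu}\); there the local \(\|\cdot\|_1\)-dominating property (Definition \ref{def3.1}(2)) upgrades \(\alpha\)-convergence to \(\|\cdot\|_1\)-convergence, and Lemma \ref{lemma2.6}(iv) gives \(\tau((z_n-z)\zeta^{(k)})\to 0\) for each bounded \(L^1\)-approximant \(\zeta^{(k)}\) of \(\xi e_{\lambda_0}\), yielding the desired pairing limit and thereby the contradiction.
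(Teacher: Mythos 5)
Your forward inclusion $\mathcal{K}\subseteq\overline{[\mathcal{K}\cap L^\alpha(\mathcal{M},\tau)]_\alpha\cap\mathcal{M}}^{w^*}$ is correct and matches the paper, and you have correctly located the obstacle in the reverse inclusion: after Hahn--Banach produces $\xi\in L^1(\mathcal{M},\tau)$ with $\tau(z\xi e)\neq 0$ and $\tau(y\xi e)=0$ on $\mathcal{K}$, the $\alpha$-convergence $z_n\to z$ cannot be paired directly against an $L^1$ element. But your proposed resolution has a genuine gap. First, you apply Lemma \ref{lemma2.12} to $z$ and $e_{\lambda_0}$; since $z$ already lies in $\mathcal{M}$, the conclusion $h_1e_{\lambda_0}z\in\mathcal{M}$ is vacuous and does nothing to tame the unbounded object in the pairing, which is $\xi$, not $z$. (Your subsequent claims that the $a_kh_1e_{\lambda_0}z_n$ are ``bounded in $\mathcal{M}$'' uniformly in $n$, and that their $\alpha$-limit $a_kh_1e_{\lambda_0}z$ lies in $\mathcal{K}$, are respectively unjustified and circular --- membership of $z$ in $\mathcal{K}$ is exactly what is at stake.) Second, the final step --- approximating $\xi e_{\lambda_0}$ by ``bounded $L^1$-approximants $\zeta^{(k)}$'' and applying Lemma \ref{lemma2.6}(iv) for each fixed $k$ --- founders on an interchange of limits: the error $\tau\bigl((z_n-z)(\xi e_{\lambda_0}-\zeta^{(k)})\bigr)$ cannot be made small uniformly in $n$ precisely because $\|z_n\|_\infty$ is not uniformly bounded, and a generic bounded approximant $\zeta^{(k)}$ need not annihilate $\mathcal{K}$, so you cannot instead fix $k$ and run the contradiction with $\zeta^{(k)}$ in place of $\xi e_{\lambda_0}$.

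The missing idea, which is the heart of the paper's proof, is to apply the factorization of Lemma \ref{lemma2.12} to the \emph{functional}: write $e=h_3h_4=h_4h_3$ with $h_3\in eH^\infty e$, $h_4\in eH^1e$ and $\xi eh_3\in\mathcal{M}$, approximate $h_4$ by $k_n\in H^\infty$ in $\|\cdot\|_1$-norm, and replace $\xi e$ by the single bounded element $z_0=\xi eh_3k_N$. This $z_0$ still separates ($\tau(xz_0)\neq 0$ for suitable $N$, since $x\xi eh_3\in\mathcal{M}$ and $\|h_4-k_n\|_1\to 0$) and, crucially, still annihilates $\mathcal{K}$ \emph{exactly}, because $\tau(y\xi eh_3k_N)=\tau(eh_3k_N\,y\,\xi)=\varphi(eh_3k_Ny)=0$ for $y\in\mathcal{K}$, using $eh_3k_N\in H^\infty$ and $H^\infty\mathcal{K}\subseteq\mathcal{K}$. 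With a bounded, exactly annihilating $z_0=z_0e$, the pairing reduces to $|\tau(e(x-x_n)z_0)|\leq\|e(x-x_n)\|_1\|z_0\|$, and $\|e(x-x_n)\|_1\to 0$ follows from $\alpha(e(x-x_n))\to 0$ via the polar-decomposition trick $ey=e\sqrt{eyy^*e}\,ev$ and the locally $\|\cdot\|_1$-dominating property on the corner $e\mathcal{M}e$ --- the one ingredient of your sketch that is used correctly, but which by itself does not close the argument.
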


    \begin{proof}
        First we must show that
            $$\mathcal{K}\subseteq \overline{[\mathcal{K}\cap L^\alpha(\mathcal{M},\tau)]_\alpha\cap\mathcal{M}}^{w^*}.$$

        Let $x\in\mathcal{K}\subseteq \mathcal{M}$. We know that $\tau$ restricted to $\mathcal{D}$ is semifinite, so there exists a net of projections $\{e_\lambda\}_{\lambda\in\Lambda}$ such that $\tau(e_\lambda)<\infty$ and $e_\lambda \rightarrow I$ in the weak* topology. Also, $e_\lambda x\rightarrow x$ in the weak* topology.

        To show that
            $$x\in\overline{[\mathcal{K}\cap L^\alpha(\mathcal{M},\tau)]_\alpha \cap \mathcal{M}}^{w^*},$$
        it is sufficient to show that $e_\lambda x\in [\mathcal{K}\cap L^\alpha(\mathcal{M},\tau)]_\alpha\cap \mathcal{M}$.  We have that $e_\lambda x$ is in $\mathcal{K}$, as $x\in\mathcal{K}$ and  $\mathcal{K}$ is $H^\infty$-invariant.  We also know $\|e_\lambda x\|_\alpha\leq \|e_\lambda\|_\alpha\|x\|<\infty$.  Therefore, $e_\lambda x\in L^\alpha(\mathcal{M},\tau)$, and $e_\lambda x\in\mathcal{K}\cap L^\alpha(\mathcal{M},\tau)\subseteq [\mathcal{K}\cap L^\alpha(\mathcal{M},\tau)]_\alpha$.  Thus, $x\in\overline{[\mathcal{K}\cap L^\alpha(\mathcal{M},\tau)]_\alpha\cap \mathcal{M}}^{w^*}$.

        Hence $\mathcal{K}\subseteq\overline{[\mathcal{K}\cap L^\alpha(\mathcal{M},\tau)]_\alpha \cap\mathcal{M}}^{w^*}$.

        Next, we show that
            $$\overline{[\mathcal{K}\cap L^\alpha(\mathcal{M},\tau)]_\alpha \cap\mathcal{M}}^{w^*}\subseteq \mathcal{K}.$$
        It suffices to show that $[\mathcal{K}\cap L^\alpha(\mathcal{M},\tau)]_\alpha\cap\mathcal{M} \subseteq \mathcal{K}$ since $\mathcal{K}$ is weak*-closed.

        Suppose, to the contrary, that $ [\mathcal{K}\cap L^\alpha (\mathcal{M},\tau)]_\alpha\cap \mathcal{M}  \subsetneqq \mathcal{K} $.  There exists an $x\in[\mathcal{K}\cap L^\alpha(\mathcal{M},\tau)]_\alpha \cap \mathcal{M}$ such that $x\notin \mathcal{K}$.
        Since the restriction of $\tau$ to $\mathcal{D}$ is semifinite, there exists a net $\{e_\lambda\}_{\lambda\in\Lambda}$ of projections such that $\tau(e_\lambda)\leq\infty$ and $e_\lambda x\rightarrow x$ in the weak* topology.

    As $x\notin \mathcal K$, by the Hahn-Banach theorem, there exists a $\varphi\in\mathcal{M}_\#$ such that $\varphi(x)\neq 0$ and $\varphi(y)=0$ for all $y$ in $\mathcal{K}$.  As $x\in[\mathcal{K}\cap L^\alpha (\mathcal{M},\tau)]_\alpha \cap\mathcal{M}$ and $x\notin \mathcal{K}$, there exists a $\lambda$ such that $e_\lambda x\in [\mathcal{K}\cap L^\alpha (\mathcal{M},\tau)]_\alpha \cap \mathcal{M}$ and $e_\lambda x\notin \mathcal{K}$.
        Since $\varphi\in\mathcal{M}_\#$, there exists a $\xi$ in $L^1(\mathcal{M},\tau)$ such that $\varphi(z)=\tau(z\xi)$ for every $z\in\mathcal{M}$.  It follows that there exists a projection $e\in\mathcal{D}$ with $\tau(e)<\infty$  so that $\tau(x\xi e)\neq 0$, and $\tau(y\xi e)=0$ for every $y\in\mathcal{K}$.

        We claim that there exists a $z=\xi e\in\mathcal{M}e$ such that $\tau(xz)\neq 0$ and $\tau(yz)=0$ for all $y\in\mathcal{K}$.

        Note that $\xi e\in L^1(\mathcal{M},\tau)$ since $\xi \in L^1(\mathcal{M},\tau)$ and $\tau(e)<\infty$. By Lemma \ref{lemma2.12}, there exist $h_3\in eH^\infty e$, and $h_4\in eH^1 e$ such that $h_3 h_4=e=h_4 h_3$ and $\xi eh_3\in \mathcal{M}$.  There exists $\{k_n\}$ in $H^\infty$ such that $k_n\rightarrow h_4$ in $\|\cdot\|_1$-norm.  So,
            \begin{align}
                \lim_{n\rightarrow \infty} |\tau(ex\xi)-\tau(x\xi eh_3k_n)| &=\lim_{n\rightarrow \infty}|\tau(x\xi eh_3h_4)-\tau(x\xi eh_3k_n)|\notag \\
            &\leq \lim_{n\rightarrow\infty}\|x\| \|\xi e h_3\| \|h_4-k_n\|_1\notag \\
            &=0. \notag
            \end{align}
        There exists an $N\in\mathbb{N}$ such that $\tau(x\xi e h_3 k_N)\neq 0$, since $\tau(x\xi)\neq 0$.  We let $z=\xi e h_3 k_N \in \mathcal{M}$.  Then, $z=ze\in\mathcal{M}e$ such that $\tau(xz)=\tau(x \xi e h_3 k_N)\neq 0$, and $\tau(yz)=\tau(y\xi e h_3 k_N)=\tau((e h_3 k_N)y\xi)=0$ for every $y\in\mathcal{K}$.

        Since $x\in[\mathcal{K}\cap L^\alpha(\mathcal{M},\tau)]_\alpha \cap \mathcal{M}$ there exists $\{x_n\}$ in $\mathcal{K}\cap L^\alpha(\mathcal{M},\tau)$ such that $x_n\rightarrow x$ in $\alpha$ norm, and $ex_n\rightarrow ex$ in $\alpha$-norm. Note $ey=\sqrt{eyy^*e}v= e\sqrt{eyy^*e}ev$.  Therefore, $ex_n\rightarrow ex$ in $\|\cdot\|_1$-norm, as  $\|ey\|_1=\|e\sqrt{eyy^*e}e\|_1$,   $\alpha(ey)=\alpha(e\sqrt{eyy^*e}e)$, and  $\alpha$ is locally $\|\cdot\|_1$ -dominating.

         We also have that $|\tau(xz-x_n z)|=|\tau((x-x_n)z)|\leq \|e(x_n-x)\|_1 \|z\|$.  Finally, since $\{x_n\}$ is in $\mathcal{K}\cap L^\alpha (\mathcal{M}, \tau) \subseteq \mathcal{K}$, $\tau(x_n z)=0$.  Hence, $\tau(xz)=0$, which is a contradiction.

         Therefore, $\overline{[\mathcal{K}\cap L^\alpha(\mathcal{M},\tau)]_\alpha\cap\mathcal{M}}^{w^*}\subseteq\mathcal{K}$.

         Thus, $\mathcal{K}=\overline{[\mathcal{K}\cap L^\alpha(\mathcal{M},\tau)]_\alpha \cap \mathcal{M}}^{w^*}$.
    \end{proof}

    \begin{lemma}\label{lemma3.4}
        Suppose $\mathcal{M}$ is a semifinite von Neumann algebra with a faithful, normal tracial weight $\tau$, and suppose that $\alpha$ is a unitarily invariant, locally $\|\cdot\|_1$-dominating, mutually-continuous norm with respect to $\tau$.  Let $H^\infty$ be a semifinite, subdiagonal subalgebra of $\mathcal{M}$.  Assume that $S$ is a subset of $\mathcal{M}$ such that $H^\infty S\subseteq S$.  Then
            $$[S\cap L^\alpha(\mathcal{M},\tau)]_\alpha= [\overline{S}^{w^*}\cap L^\alpha (\mathcal{M}, \tau)]_\alpha.$$
    \end{lemma}

    \begin{proof}
        Clearly, $S\cap L^\alpha(\mathcal{M},\tau)\subseteq \overline{S}^{w^*} \cap L^\alpha(\mathcal{M},\tau)$ so, $[S\cap L^\alpha(\mathcal{M},\tau)]_\alpha \subseteq [\overline{S}^{w^*} \cap L^\alpha(\mathcal{M},\tau)]_\alpha$.\\

        We will show that $\overline{S}^{w^*}\cap L^\alpha(\mathcal{M},\tau)\subseteq [S\cap L^\alpha(\mathcal{M},\tau)]_\alpha$.  Let $x\in\overline{S}^{w^*}\cap L^\alpha(\mathcal{M},\tau)$.  We know that there exists a net $\{e_\lambda\}$ in $\mathcal{D}$ of projections such that $\tau(e_\lambda)<\infty$, and $e_\lambda \rightarrow I$ in the weak* topology.  Thus, $e_\lambda x\rightarrow x$ in the weak* topology.

 We will show that $e_\lambda x \in [S\cap L^\alpha(\mathcal{M},\tau)]_\alpha$ in order to show that $x\in[S\cap L^\alpha(\mathcal{M},\tau]_\alpha$.  By Lemma \ref{lemma3.2}, we have that
        $$[S\cap L^\alpha(\mathcal{M},\tau)]_\alpha\cap\mathcal{M}\subseteq \overline{[S\cap L^\alpha(\mathcal{M},\tau)]_\alpha}^{w^*}\cap L^\alpha(\mathcal{M},\tau).$$
        Since $x\in\overline{S}^{w^*}\cap L^\alpha(\mathcal{M},\tau)$, there exists a net $\{x_j\}$ in $S$ such that $x_j \rightarrow x$ in the weak*-topology.
        Therefore  $e_\lambda x_j \rightarrow e_\lambda x$ in the weak*-topology for every $\lambda\in\Lambda$. We note that $\alpha(e_\lambda x_j)\leq \alpha(e_\lambda)\|x_j\|$, and $H^\infty S\subseteq S$.
        Therefore $e_\lambda x_j\in S\cap L^\alpha(\mathcal{M},\tau)$, and $e_\lambda x_j\in \overline{[S\cap L^\alpha(\mathcal{M},\tau)]_\alpha\cap\mathcal{M}}^{w^*}$.
        Thus, $e_\lambda x\in\overline{[S\cap L^\alpha[\mathcal{M},\tau)]_\alpha \cap\mathcal{M}}^{w^*}$.
    It is clear that $e_\lambda x\in L^\alpha(\mathcal{M},\tau)$.   By Lemma \ref{lemma3.2}, $\overline{[S\cap L^\alpha(\mathcal{M},\tau)]_\alpha \cap \mathcal{M}}^{w^*} \cap L^\alpha(\mathcal{M},\tau) = [S\cap L^\alpha(\mathcal{M},\tau)]_\alpha\cap\mathcal{M}$.  So $e_\lambda x\in[S\cap L^\alpha(\mathcal{M},\tau)]_\alpha$.

Therefore,  $ x\in[S\cap L^\alpha(\mathcal{M},\tau)]_\alpha$, whence $\overline{S}^{w^*}\cap L^\alpha(\mathcal{M},\tau\subseteq [S\cap L^\alpha(\mathcal{M},\tau)]_\alpha$.  Hence,
            $$[\overline{S}^{w^*}\cap L^\alpha(\mathcal{M},\tau)]_\alpha=[S\cap L^\alpha(\mathcal{M},\tau)]_\alpha.$$

    \end{proof}

    Now, we prove Theorem \ref{theorem3.1}.

    \begin{proof}
        Let $\mathcal{K}_1=\overline{\mathcal{K}\cap\mathcal{M}}^{w^*}$.  $\mathcal{K}_1$ is a weak* closed subspace of $\mathcal{M}$ such that $H^\infty \mathcal{K}_1\subseteq \mathcal{K}_1$.  Then by Theorem 4.5 in \cite{Sag}, there exist a weak* closed subspace $Y_1\subseteq\mathcal{M}$ and a family $\{u_\lambda\}_{\lambda\in\Lambda}$ of partial isometries in $\mathcal{M}$ such that
            \begin{enumerate}
                \item[(a)] $u_\lambda Y_1^*=0$ for every $\lambda\in\Lambda$;
                \item[(b)] $u_\lambda u_\lambda^* \in\mathcal{D}$, and $u_\lambda u_\mu^*=0$ for every $\lambda, \mu \in\Lambda$ such that $\lambda\neq\mu$;
                \item[(c)] $Y_1=\overline{H^\infty_0 Y_1}^{w^*}$;
                \item[(d)] $\mathcal{K}_1=Y_1\oplus^{row}(\oplus^{row}_{\lambda\in\Lambda} H^\infty u_\lambda)$.
            \end{enumerate}
        Let $Y=[Y_1\cap L^\alpha(\mathcal{M},\tau)]_\alpha$. \\
        (i) We know that there exists $\{a_n\}\subseteq Y_1^*$ such that $a_n\rightarrow a$ in $\alpha$-norm for some $a\in Y^*_1$. From (a), and the definition of $Y_1$, $a_n u_i \rightarrow au_i$ in $\alpha$-norm.  Thus, we may conclude that $u_\lambda Y^*=0$ for every $\lambda\in\Lambda$.

        (ii) follows directly from (b).  

        (iii) We will show that $Y=[H^\infty_0 Y]_\alpha$.  We have that
                \begin{align}
                    Y&=[Y_1\cap L^\alpha(\mathcal{M},\tau)]_\alpha \tag{by definition of $Y$}  \nonumber \\
                    &=[\overline{H^\infty_0 Y_1}^{w^*}\cap L^\alpha(\mathcal{M},\tau)]_\alpha \tag{by (c)} \nonumber \\
                    &=[H^\infty_0 Y_1\cap L^\alpha(\mathcal{M},\tau)]_\alpha \tag{by Lemma \ref{lemma3.4}} \nonumber\\
                    &=[H^\infty_0(\overline{[Y_1\cap L^\alpha(\mathcal{M},\tau)]_\alpha \cap \mathcal{M}}^{w^*})\cap L^\alpha(\mathcal{M},\tau)]_\alpha  \tag{by Lemma \ref{lemma3.3}}\nonumber \\
                    &\subseteq [\overline{H^\infty_0 ([Y_1\cap L^\alpha(\mathcal{M},\tau)]_\alpha \cap\mathcal{M})}^{w^*}\cap L^\alpha(\mathcal{M},\tau)]_\alpha \tag{by Theorem 1.7.8 in \cite{Sakai}}\nonumber \\
                    &=[H^\infty_0([Y_1\cap L^p(\mathcal{M},\tau)]_\alpha \cap \mathcal{M}) \cap L^\alpha(\mathcal{M},\tau)]_\alpha \tag{by Lemma \ref{lemma3.4}} \nonumber\\
                    &=[H^\infty_0(Y\cap \mathcal{M})\cap L^\alpha(\mathcal{M},\tau)]_\alpha \tag{by defintion of $Y$} \nonumber \\
                    &\subseteq[H^\infty_0 Y]_\alpha \nonumber\\
                    &\subseteq Y. \nonumber
                \end{align}
                Hence, $Y=[H^\infty_0 Y]_\alpha$ as desired.

        (iv) Finally, we will show that $\mathcal{K}=Y\oplus^{row}(\oplus^{row}_{\lambda\in\Lambda} H^\alpha u_\lambda$). \\
        Recall that $Y=[Y_1\cap L^\alpha(\mathcal{M},\tau)]_\alpha$.

        We claim that $[H^\infty_0Y_1\cap L^\alpha(\mathcal{M},\tau)]_\alpha\subseteq [H^\infty_0(Y_1\cap L^\alpha(\mathcal{M},\tau)]_\alpha$.

        Also, by Lemma \ref{lemma3.2}, $H^\alpha u_\lambda =[H^\infty u_\lambda\cap L^\alpha(\mathcal{M},\tau)]_\alpha$ for every $\lambda\in\Lambda$.  Now,
            \begin{align}
                \mathcal{K}&=[\mathcal{K}_1\cap L^\alpha(\mathcal{M},\tau)]_\alpha \nonumber\\
                &=[\overline{Y_1+\sum_{\lambda\in\Lambda} H^\infty u_\lambda}^{w^*}\cap L^\alpha(\mathcal{M},\tau)]_\alpha  \tag{by definition of $\mathcal{K}_1$} \nonumber\\
                &=[ Y_1+\sum_{\lambda\in\Lambda} H^\infty u_\lambda \cap L^\alpha(\mathcal{M},\tau)]_\alpha \tag{by Lemma \ref{lemma3.4}} \nonumber \\
                &=[Y_1\cap L^\alpha(\mathcal{M},\tau)+\sum_{\lambda\in\Lambda} H^\infty u_\lambda \cap L^\alpha(\mathcal{M},\tau)]_\alpha \tag{by (a) and (b)} \nonumber\\
                &=[Y+\sum_{\lambda\in\Lambda} H^\alpha u_\lambda]_\alpha\nonumber \\
                &=Y\oplus^{row}(\oplus^{row}_{\lambda\in\Lambda}H^\alpha u_\lambda)\nonumber
            \end{align}
            where the last equality comes from Definition \ref{def4.7}.
    \end{proof}

    \begin{corollary} \label{corollary5.5}
        Suppose that $\mathcal{M}$ is a von Neumann algebra with a faithful, normal, semifinite tracial weight $\tau$.  Let $\alpha$ be a unitarily invariant, locally $\|\cdot\|_1$-dominating, mutually continuous norm with respect to $\tau$.  Let $\mathcal{K}$ be a subset of $L^\alpha$ such that $\mathcal{M}\mathcal{K}\subseteq\mathcal{K}$.  Then there exists a projection $q$ with $\mathcal{K}=\mathcal{M}q$.
    \end{corollary}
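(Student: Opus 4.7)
The plan is to apply Lemma~\ref{lemma3.2} with the trivial semifinite subdiagonal subalgebra $H^\infty := \mathcal{M}$ (here $\mathcal{D} = \mathcal{M}$, $\Phi = \mathrm{id}_{\mathcal M}$, and the four conditions of Definition~\ref{def2.8} are immediate), so that the hypothesis $\mathcal{M}\mathcal{K}\subseteq\mathcal{K}$ becomes $H^\infty\mathcal{K}\subseteq\mathcal{K}$. The stated conclusion $\mathcal{K}=\mathcal{M}q$ will be read as $\mathcal{K} = L^\alpha(\mathcal{M},\tau)\,q$. Lemma~\ref{lemma3.2} then yields
\[
\mathcal{K}=[\mathcal{K}\cap\mathcal{M}]_\alpha
\qquad\text{and}\qquad
\mathcal{K}\cap\mathcal{M}=\mathcal{N}\cap L^\alpha(\mathcal{M},\tau),
\]
where $\mathcal{N}:=\overline{\mathcal{K}\cap\mathcal{M}}^{\,w^*}$. (If $\mathcal{K}$ is not assumed $\alpha$-closed, I first replace it by its $\alpha$-closure, which remains $\mathcal{M}$-invariant.)

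I next show $\mathcal{N}=\mathcal{M}q$ for some projection $q\in\mathcal{M}$. Because $\mathcal{K}\cap\mathcal{M}$ is left $\mathcal{M}$-invariant and left multiplication is weak*-continuous, $\mathcal{N}$ is a weak*-closed left ideal of $\mathcal{M}$. For each $x\in\mathcal{K}\cap\mathcal{M}$ with polar decomposition $x=u|x|$, we have $|x|=u^*x\in\mathcal{N}$, and the bounded approximants $(|x|+1/n)^{-1}|x|\in\mathcal{M}\cdot|x|\subseteq\mathcal{N}$ converge in SOT (hence weak*) to the right support $s(x)$, placing $s(x)\in\mathcal{N}$. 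An analogous spectral-approximation argument shows that every finite join of such support projections lies in $\mathcal{N}$, and the weak*-closedness of $\mathcal{N}$ then forces $q := \sup\{s(x):x\in\mathcal{K}\cap\mathcal{M}\}\in\mathcal{N}$; consequently $\mathcal{M}q\subseteq\mathcal{N}$. Conversely, $xq=x$ for every $x\in\mathcal{K}\cap\mathcal{M}$ (since $s(x)\le q$), and weak*-continuity of right multiplication by $q$ extends this to $yq=y$ for every $y\in\mathcal{N}$, giving $\mathcal{N}\subseteq\mathcal{M}q$.

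Combining the previous steps, $\mathcal{K}=[\mathcal{M}q\cap L^\alpha(\mathcal{M},\tau)]_\alpha$, and it remains to identify this with $L^\alpha(\mathcal{M},\tau)\,q$. By Lemma~\ref{lemma3.4.3}, right multiplication by $q$ is a contraction on $L^\alpha(\mathcal{M},\tau)$, so $L^\alpha(\mathcal{M},\tau)\,q$ is $\alpha$-closed and evidently contains $\mathcal{M}q\cap L^\alpha(\mathcal{M},\tau)$, which gives one inclusion. For the reverse, any $y\in L^\alpha(\mathcal{M},\tau)\,q$ satisfies $y=yq$; choosing $\{y_n\}\subseteq\mathcal{I}$ with $y_n\to y$ in $\alpha$-norm (possible by $\alpha$-density of $\mathcal{I}$) produces $y_n q\in\mathcal{M}q\cap L^\alpha(\mathcal{M},\tau)$ with $y_n q\to yq=y$ in $\alpha$-norm. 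The principal obstacle in this plan is the structural identification $\mathcal{N}=\mathcal{M}q$, the delicate point being that $q$ itself belongs to $\mathcal{N}$; once this is handled via the spectral-projection approximation above, the passage to $L^\alpha(\mathcal{M},\tau)$ is a routine density argument in the spirit of Lemmas~\ref{lemma3.2}--\ref{lemma3.4}.
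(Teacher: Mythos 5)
Your proposal is correct, but it takes a genuinely different route from the paper. The paper's proof treats $\mathcal{M}$ as the trivial semifinite subdiagonal subalgebra (exactly as you do) and then simply invokes the full Beurling theorem, Theorem \ref{theorem3.1}: since $H^\infty_0=\{0\}$, the summand $Y=[H^\infty_0Y]_\alpha$ vanishes, and since $\mathcal{D}=\mathcal{M}$ one has $H^\alpha u_\lambda=L^\alpha(\mathcal{M},\tau)u_\lambda^*u_\lambda$, so the row sum collapses to $L^\alpha(\mathcal{M},\tau)q$ with $q=\sum_\lambda u_\lambda^*u_\lambda$. You instead bypass Theorem \ref{theorem3.1} entirely: you use only the density statements of Lemma \ref{lemma3.2} to reduce to the bounded part $\mathcal{K}\cap\mathcal{M}$, identify its weak* closure as a weak*-closed left ideal, and then reprove (via support projections and their finite joins) the classical fact that such an ideal equals $\mathcal{M}q$ for a projection $q$, before transporting this back to $L^\alpha$ by the routine density argument. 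Your approach buys independence from the heavy machinery — in particular from Sager's weak* Beurling theorem on which Theorem \ref{theorem3.1} rests — and makes the corollary essentially self-contained modulo Lemma \ref{lemma3.2}; the paper's approach buys brevity, since once Theorem \ref{theorem3.1} is in hand the corollary is a two-line specialization. All the individual steps you outline (the $\mathcal{M}$-invariance and weak* closedness of $\mathcal{N}$, the SOT/weak* convergence of the bounded approximants $(|x|+1/n)^{-1}|x|$ to the right support, the $\alpha$-closedness of $L^\alpha(\mathcal{M},\tau)q$ via Lemma \ref{lemma3.4.3}) check out.
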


    \begin{proof}
        We note that $\mathcal{M}$ can be considered as a semifinite subdiagonal subalgebra of $\mathcal{M}$ itself.  Hence, we let $\mathcal{M}=H^\infty$, and it follows that $\mathcal{D}=\mathcal{M}$ and $\Phi$ is the identity map on $\mathcal{M}$.  Also, $H^\infty_0=\{0\}$ and $H^\alpha=L^\alpha(\mathcal{M},\tau)$.

        Let $\mathcal{K}$ be a closed subspace of $L^\alpha(\mathcal{M},\tau)$ such that $\mathcal{MK}\subseteq\mathcal{K}$.  From Theorem \ref{theorem3.1},
            $$\mathcal{K}=Y\oplus_{row}(\oplus^{row}_{\lambda\in\Lambda}H^\alpha u_\lambda),$$
        where $u_\lambda Y^*=0$ for every $\lambda\in\Lambda$, $u_\lambda u_\lambda^*\in\mathcal{D}$, and $u_\lambda u_mu^*=0$ for every $\lambda,\mu\in\Lambda$ such that $\lambda\neq\mu$, and $Y=[H^\infty_0 Y]_\alpha$.

        It is clear that because $H^\infty_0=\{0\}$, $Y=0$.  Also, since $\mathcal{D}=\mathcal{M}$, we have that
            \begin{align}
                H^\alpha u_\lambda &= L^\alpha(\mathcal{M},\tau) u_\lambda =L^\alpha(\mathcal{M},\tau)u_\lambda u_\lambda^* u_\lambda\notag \\
                &\subseteq L^\alpha(\mathcal{M},\tau)u_\lambda^* u_\lambda \subseteq L^\alpha(\mathcal{M},\tau)u_\lambda =H^\alpha u_\lambda. \notag
            \end{align}
            Therefore, $H^\alpha u_\lambda =L^\alpha(\mathcal{M},\tau u_\lambda^* u_\lambda$.  Specifically, we find that
            \begin{align}
                \mathcal{K} &= Y\oplus^{row}(\oplus^{row}_{\lambda\in\Lambda} H^\alpha u_\lambda)=(\oplus^{row}_{\lambda\in\Lambda}L^\alpha(\mathcal{M},\tau)u_\lambda^* u_\lambda)\notag\\
                &L^\alpha(\mathcal{M},\tau)\big(\sum_{\lambda\in\Lambda}u_\lambda^* u_\lambda\big)=L^\alpha(\mathcal{M},\tau)q \notag
            \end{align}
            where we let $\sum_{\lambda\in\Lambda} u_\lambda^* u_\lambda =q$, and $q$ is a projection in $\mathcal{M}$.  This ends the proof.
    \end{proof}

\section{Applications}

    \subsection{Invariant subspaces for non-commutative Banach function spaces}
We briefly recall our discussion of a non-commutative Banach function space.  Let $E$ be a symmetric Banach function space on $(0,\infty)$ with Lebesgue measure.
As before, we let $\mathcal{M}$ be a von Neumann algebra with a faithful, normal tracial state $\tau$ and $\mathcal{I}=\{x\in\mathcal{M}:x \text{ is a finite rank operator in }(\mathcal{M},\tau) \text{ and }\\ \|\mu(x)\|_E<\infty\}$.  We may then define a Banach function space $\mathcal{I}(\tau)$ , and a norm $\|\cdot\|_{\mathcal{I}(\tau)}$ by $\|x\|_{\mathcal{I}(\tau)}=\|\mu(x)\|_{E(0,\infty)}$ for every $x\in \mathcal{I}(\tau)$.
We let $H^\infty$ be a semifinite subdiagonal subalgebra of $\mathcal M$, as described earlier.
The following is an easy corollary of Theorem \ref{theorem3.1} and Proposition \ref{prop3.3}.

    \begin{corollary}\label{corollary6.1}
        Suppose that $\mathcal{I}(\tau)$ is a Banach function space on the diffuse von Neumann algebra $\mathcal{M}$ with order continuous norm $\|\cdot\|_{\mathcal{I}(\tau)}$. Let $\mathcal{D}=H^\infty\cap (H^\infty)^*$.  Assume that $\mathcal{K}$ is a closed subspace of $\mathcal{I}(\tau)$ such that $H^\infty \mathcal{K}\subseteq \mathcal{K}$.  

        Then, there exist a closed subspace $Y$ of $\mathcal{I}(\tau)$ and a family $\{u_\lambda\}$ of partial isometries in $\mathcal{M}$ such that
            \begin{enumerate}
                \item[(i)] $u_\lambda Y^*=0$ for every $\lambda\in\Lambda$;\
                \item[(ii)] $u_\lambda u_\lambda^*\in\mathcal{D}$, and $u_\lambda u_\mu^*$ for every $\lambda, \mu\in \Lambda$ with $\lambda\neq\mu$;
                \item[(iii)] $Y=[H^\infty_0 Y]_\alpha$;
                \item[(iv)] $\mathcal{K}=Y\oplus^{row}(\oplus^{row}_{\lambda\in\Lambda} H^{\mathcal{I}(\tau)} u_\lambda)$.
            \end{enumerate}
    \end{corollary}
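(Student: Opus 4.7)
The plan is to recognize Corollary \ref{corollary6.1} as essentially an instantiation of Theorem \ref{theorem3.1} for the particular norm $\alpha = \|\cdot\|_{\mathcal{I}(\tau)}$. The strategy therefore has two parts: verify that this norm falls into the class covered by Theorem \ref{theorem3.1}, and then identify $\mathcal{I}(\tau)$ with the corresponding $L^\alpha$-space so that the conclusion translates verbatim.

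First I would invoke Proposition \ref{prop3.3}. Its hypotheses are precisely those assumed in the corollary: $\mathcal{M}$ is diffuse, $\tau$ is a semifinite, faithful, normal tracial state, $\mathcal{I}(\tau)$ is a Banach function space, and $\|\cdot\|_{\mathcal{I}(\tau)}$ is order continuous. Proposition \ref{prop3.3} then guarantees that the restriction of $\|\cdot\|_{\mathcal{I}(\tau)}$ to the ideal $\mathcal{I}$ of elementary operators is a unitarily invariant, locally $\|\cdot\|_1$-dominating, mutually continuous norm with respect to $\tau$, in the sense of Definition \ref{def3.1}. Set $\alpha := \|\cdot\|_{\mathcal{I}(\tau)}\!\!\restriction_{\mathcal{I}}$.

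Next, by Definition \ref{def3.2}, the completion $L^\alpha(\mathcal{M},\tau) = \overline{\mathcal{I}}^{\alpha}$ coincides with $\mathcal{I}(\tau)$, since the latter is also defined (in the Banach function space setup) as the closure of $\mathcal{I}$ under $\|\cdot\|_{\mathcal{I}(\tau)}$. Thus $\mathcal{K}\subseteq \mathcal{I}(\tau)$ is exactly a closed $H^\infty$-invariant subspace of $L^\alpha(\mathcal{M},\tau)$. Similarly, $H^{\mathcal{I}(\tau)}$ in the statement corresponds to $H^\alpha = [H^\infty \cap L^\alpha(\mathcal{M},\tau)]_\alpha$ under this identification.

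Finally, I would apply Theorem \ref{theorem3.1} to $\mathcal{K}$ with this choice of $\alpha$. The theorem directly furnishes the closed subspace $Y \subseteq L^\alpha(\mathcal{M},\tau) = \mathcal{I}(\tau)$ and the family $\{u_\lambda\}_{\lambda \in \Lambda}$ of partial isometries in $\mathcal{M}$ satisfying conditions (i)--(iv), with the row-sum decomposition $\mathcal{K} = Y \oplus^{\mathrm{row}}\bigl(\oplus^{\mathrm{row}}_{\lambda \in \Lambda} H^\alpha u_\lambda\bigr)$, which is exactly the stated decomposition once $H^\alpha$ is rewritten as $H^{\mathcal{I}(\tau)}$. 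Because Proposition \ref{prop3.3} is the heavy lifting and has already been proved, and because the identification $L^\alpha(\mathcal{M},\tau) = \mathcal{I}(\tau)$ is purely formal, there is no genuine obstacle here; the only subtle point is to confirm that the diffuseness and order continuity assumptions used to apply Proposition \ref{prop3.3} are compatible with the subdiagonal framework of Theorem \ref{theorem3.1}, which they are since neither theorem imposes extra restrictions on $H^\infty$ beyond being semifinite subdiagonal.
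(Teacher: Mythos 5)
Your proposal is correct and follows exactly the route the paper takes: the authors state that Corollary \ref{corollary6.1} is an easy consequence of Theorem \ref{theorem3.1} combined with Proposition \ref{prop3.3}, which is precisely your two-step argument of verifying via Proposition \ref{prop3.3} that $\|\cdot\|_{\mathcal{I}(\tau)}$ is a unitarily invariant, locally $\|\cdot\|_1$-dominating, mutually continuous norm and then identifying $\mathcal{I}(\tau)$ with $L^\alpha(\mathcal{M},\tau)$ so that Theorem \ref{theorem3.1} applies verbatim. No gaps.
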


    \subsection{Invariant subspaces for factors}
        We also have the following corollary from Theorem \ref{theorem3.1} and Proposition \ref{prop3.5}.  

    \begin{corollary} \label{corollary6.2}
        Suppose $\mathcal{M}$ is a factor with a faithful, normal tracial weight $\tau$.  Let $\alpha:\mathcal{I}\rightarrow[0,\infty)$, where $\mathcal{I}$ is the set of elementary operators in $\mathcal{M}$, be a unitarily invariant norm such that any net $\{e_\lambda\}$ in $\mathcal{M}$ with $e_\lambda \uparrow I$ in the weak* topology implies that $\alpha((e_\lambda - I)x)\rightarrow 0$.  Let $H^\infty$ be a semifinite subdiagonal subalgebra of $L^\alpha(\mathcal{M},\tau)$.  Let $\mathcal{D}=H^\infty\cap (H^\infty)^*$.  Assume that $\mathcal{K}$ is a closed subspace of $L^\alpha(\mathcal{M},\tau)$ such that $H^\infty \mathcal{K}\subseteq \mathcal{K}$.  

        Then, there exist a closed subspace $Y$ of $L^\alpha(\mathcal{M},\tau)$ and a family $\{u_\lambda\}$ of partial isometries in $\mathcal{M}$ such that
            \begin{enumerate}
                \item[(i)] $u_\lambda Y^*=0$ for every $\lambda\in\Lambda$;\
                \item[(ii)] $u_\lambda u_\lambda^*\in\mathcal{D}$, and $u_\lambda u_\mu^*$ for every $\lambda, \mu\in \Lambda$ with $\lambda\neq\mu$;
                \item[(iii)] $Y=[H^\infty_0 Y]_\alpha$;
                \item[(iv)] $\mathcal{K}=Y\oplus^{row}(\oplus^{row}_{\lambda\in\Lambda} H^\alpha u_\lambda)$.
            \end{enumerate}
    \end{corollary}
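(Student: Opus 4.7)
The plan is to reduce the statement directly to Theorem \ref{theorem3.1} by showing that the weakened hypotheses on $\alpha$, when combined with the factor assumption on $\mathcal M$, already imply the full definition of a unitarily invariant, locally $\|\cdot\|_1$-dominating, mutually continuous norm with respect to $\tau$. Once this upgrade of $\alpha$ is in hand, Theorem \ref{theorem3.1} applies verbatim and produces the desired subspace $Y$ and family $\{u_\lambda\}$ satisfying (i)--(iv).

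The first and essentially only substantive step is the verification that the hypotheses of Corollary \ref{corollary6.2} match those of Proposition \ref{prop3.5}. By assumption, $\mathcal{M}$ is a (semifinite) factor with a faithful, normal tracial weight $\tau$, and $\alpha:\mathcal{I}\to[0,\infty)$ is a unitarily invariant norm satisfying the left approximation condition $\alpha((e_\lambda-I)x)\to 0$ whenever $e_\lambda\uparrow I$ in the weak* topology. These are exactly the hypotheses of Proposition \ref{prop3.5}, so that proposition yields that $\alpha$ is unitarily invariant, locally $\|\cdot\|_1$-dominating, and mutually continuous with respect to $\tau$ in the sense of Definition \ref{def3.1}. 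In particular, the constants $c(e)$ for each finite-trace projection $e$ are produced via the Dixmier Approximation Property on the finite factor $e\mathcal{M}e$, and the implication $\alpha(e_\lambda)\to 0\Rightarrow \tau(e_\lambda)\to 0$ is obtained by the factor-based contradiction argument (passing to a decreasing subsequence of projections and exploiting the minimality of their wedge) already carried out in the proof of Proposition \ref{prop3.5}.

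With $\alpha$ promoted to a norm of the type treated in the main theorem, one then invokes Theorem \ref{theorem3.1} applied to the semifinite subdiagonal subalgebra $H^\infty\subseteq \mathcal{M}$ and the closed $H^\infty$-invariant subspace $\mathcal{K}\subseteq L^\alpha(\mathcal{M},\tau)$. This supplies a closed subspace $Y\subseteq L^\alpha(\mathcal{M},\tau)$ and a family $\{u_\lambda\}_{\lambda\in\Lambda}$ of partial isometries in $\mathcal{M}$ with $u_\lambda Y^*=0$, $u_\lambda u_\lambda^*\in\mathcal{D}$ and pairwise orthogonality $u_\lambda u_\mu^*=0$ for $\lambda\neq\mu$, $Y=[H^\infty_0 Y]_\alpha$, and the row-sum decomposition $\mathcal{K}=Y\oplus^{\mathrm{row}}(\oplus^{\mathrm{row}}_{\lambda\in\Lambda} H^\alpha u_\lambda)$, which is precisely conclusions (i)--(iv).

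The main (indeed only) obstacle is already handled by Proposition \ref{prop3.5}: it is the verification that the mere assumption of unitary invariance together with left weak* continuity on $\mathcal I$ forces the local $\|\cdot\|_1$-domination and the second half of mutual continuity. The factor hypothesis is used here in two ways---to pass to decreasing sequences of projections of comparable trace (giving the norm-to-trace control), and to invoke the Dixmier Approximation Property in $e\mathcal M e$ (giving the local $\|\cdot\|_1$-domination). Once this structural result is cited, the remainder of the argument is formal.
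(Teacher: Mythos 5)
Your proposal is correct and follows exactly the paper's intended route: the paper derives Corollary \ref{corollary6.2} precisely by combining Proposition \ref{prop3.5} (to upgrade the weakly hypothesized $\alpha$ on a factor to a unitarily invariant, locally $\|\cdot\|_1$-dominating, mutually continuous norm) with Theorem \ref{theorem3.1}. Your additional remarks on where the factor hypothesis enters (Dixmier approximation in $e\mathcal{M}e$ and the decreasing-projection argument) accurately reflect the content of the proof of Proposition \ref{prop3.5}.
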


    \subsection{Invariant subspaces of analytic crossed products}
        Suppose that $\mathcal{M}$ is a von Neumann algebra with a semifinite, faithful normal tracial state $\tau$.  We let $\beta$ be a *-automorphism of $\mathcal{M}$ such that $\tau(\beta(x))=\tau(x)$ for every $x\in\mathcal{M}^+$ (i.e. $\beta$ is trace-preserving).

        Let $l^2(\mathbb{Z})$ denote the Hilbert space which consists of the complex-valued functions $f$ on $\mathbb{Z}$ which satisfy $\sum_{m\in\mathbb{Z}}|f(m)|^2<\infty$. Let $\{e_n\}_{n\in\mathcal{Z}}$ be the orthonormal basis of $l^2(\mathbb{Z})$ such that $e_n(m)=\delta(n,m)$.  We also denote the left regular representation of $\mathcal{Z}$ on $l^2(\mathbb{Z})$ by $\lambda:\mathbb{Z}\rightarrow B(l^2(\mathbb{Z}))$, where $\lambda(n)(e_m)=e_{m+n}$.

        We let $\mathcal{H}=L^2(\mathcal{M},\tau)\otimes l^2(\mathbb{Z})$, or equivalently, $H=\oplus_{m\in\mathbb{Z}} L^2(\mathcal{M},\tau)\otimes e_m$.  The representations $\Psi$ of $\mathcal{M}$ and $\Lambda$ of $\mathbb{Z}$ may be defined by
            \begin{align}
                &\Psi(x)(\xi\otimes e_m)=(\beta^{-m}\xi)\otimes e_m,  &\text{ for all } x\in\mathcal{M}, \xi \in L^2(\mathcal{M},\tau)  \text{ and } m\in\mathbb{Z} \notag \\
                &\Lambda (n)(\xi\otimes e_m)=\xi\times(\lambda(n)e_m)  &\text{ for all } n,m\in\mathbb{Z}. \notag
            \end{align}

        It is not hard to verify that
            $$\Lambda(n)\Psi(x)\Lambda(-n)=\Psi(\beta^n(x)) \qquad \text{ for all } x\in\mathcal{M} \text{ and }n\in\mathbb{Z}.$$

        We may define the crossed product of $\mathcal{M}$ by an action $\beta$, which we denote by $\mathcal{M}\rtimes_\beta \mathbb{Z}$, to be the von Neumann algebra generated by $\Psi(\mathcal{M})$ and $\Lambda(\mathbb{Z})$ in $B(\mathcal{H})$. When there is no possibility of confusion, we will identify $\mathcal{M}$ with its image $\Psi(\mathcal{M})$ under $\Psi$ in $\mathcal{M}\rtimes_\beta\mathbb{Z}$.

        In Chapter 13 of \cite{KR}, amongst others, it is shown that there exists a faithful, normal conditional expectation, $\Phi$, taking $\mathcal{M}\rtimes_\beta \mathbb{Z}$ onto $\mathcal{M}$ such that
            $$\Phi\bigg(\sum_{n=-N}^{N} \Lambda(n)\Psi(x_n)\bigg)=x_0 \qquad \text{ where } x_n\in\mathcal{M} \text{ for every } -N\leq n\leq N.$$
        There also exists a semifinite, normal, extended tracial weight on $\mathcal{M}\rtimes_\beta \mathbb{Z}$, which we still denote by $\tau$, and which satisfies
            $$\tau(y)=\tau(\Phi(y)), \text{ for every postive } y \in \mathcal{M}\rtimes_\beta \mathbb{Z}.$$

        \begin{example}\label{example6.3}
            Let $\mathcal{M}=l^\infty(\mathbb{Z})$.  Then $\mathcal{M}$ is an abelian von Neumann algebra with a semifinite, faithful, normal tracial weight, $\tau$ which is given by
                $$\tau(f)=\sum_{m\in\mathcal{Z}}f(m), \qquad \text{ for every positive } f\in l^\infty(\mathbb{Z}).$$
            We let $\beta$ be an action on $l^\infty(\mathcal{Z})$, which we define by
                $$\beta (f)(m)=f(m-1), \qquad \text{ for every } f\in l^\infty(\mathbb{Z}) \text{ and } m\in\mathbb{Z}.$$
            It is known (see, for example Proposition 8.6.4 of \cite{KR}) that $l^\infty(\mathbb{Z})\rtimes_\beta \mathbb{Z}$ is a type $I_\infty$ factor.  Therefore, for some separable Hilbert space $\mathcal{H}$, $l^\infty(\mathbb{Z})\rtimes_\beta \mathbb{Z}\simeq B(\mathcal{H})$.
        \end{example}

        The next result follows from our construction of crossed products. (See also section 3 of \cite{Arv}.)

        \begin{lemma} \label{lemma6.4}
            Consider the weak *-closed, non-self-adjoint subalgebra $\mathcal{M} \rtimes_\beta \mathbb{Z}_+$ of $\mathcal{M}\rtimes_\beta\mathbb{Z}$ which is generated by
                $$\{\Lambda(n)\Psi(x):x\in\mathcal{M}, n \geq 0\}.$$
            Then the following hold:
            \begin{enumerate}
                \item[(i)] $\mathcal{M}\rtimes_\beta \mathbb{Z}_+$ is a semifinite subdiagonal subalgebra with respect to $(\mathcal{M}\rtimes_\beta\mathbb{Z},\Phi)$.  We will denote such a semifinite subdiagonal subalgebra by $H^\infty$ and call $H^\infty$ an analytic crossed product.
                \item[(ii)] We denote by $H^\infty_0$ the space $\ker(\Phi)\cap H^\infty$.  Then $H^\infty_0$ is a weak *-closed nonself-adjoint subalgebra which is generated in $\mathcal{M}\rtimes_\beta \mathbb{Z}$ by $$\{\Lambda(n)\Phi(x) : x\in\mathcal{M}, n\geq 0\}$$ and satisfies $$H^\infty_0=\Lambda(1)H^\infty.$$
                \item[(iii)] $H^\infty \cap (H^\infty)^*=\mathcal{M}$.
            \end{enumerate}
        \end{lemma}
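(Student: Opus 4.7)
The overall plan is to exploit the Fourier-style decomposition that the crossed product $\mathcal{M}\rtimes_\beta\mathbb{Z}$ inherits from its construction. For each $n\in\mathbb{Z}$, define the $n$-th ``Fourier coefficient'' map by $\Phi_n(y)=\Phi(\Lambda(-n)y)\in\mathcal{M}$, using the given expectation $\Phi:\mathcal{M}\rtimes_\beta\mathbb{Z}\to\mathcal{M}$; by construction $\Phi_0=\Phi$ and each $\Phi_n$ is weak*-continuous. Setting $P_N(y)=\sum_{n=-N}^N\Lambda(n)\Psi(\Phi_n(y))$, an application of Cesaro/Fejér summation of the $P_N$'s yields $y$ back in the weak* topology (this is the standard analogue of Fejér for crossed products, as in Arveson \cite{Arv} or Chapter~13 of \cite{KR}). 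The fundamental commutation $\Lambda(n)\Psi(x)\Lambda(-n)=\Psi(\beta^n(x))$, equivalently $\Psi(x)\Lambda(m)=\Lambda(m)\Psi(\beta^{-m}(x))$, will be used repeatedly to commute $\Psi$ and $\Lambda$ factors and to compute products of elementary generators. My intended order is (iii), then (ii), then (i), because (iii) identifies $\mathcal{D}$ explicitly as $\mathcal{M}$ and this is used in (i).

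For (iii), I would characterize $H^\infty$ as $\{y\in\mathcal{M}\rtimes_\beta\mathbb{Z}:\Phi_n(y)=0\ \forall n<0\}$. The inclusion $\supseteq$ follows from the Fejér recovery (any such $y$ is the weak* limit of its Cesaro means, which are finite sums in the generating set); the inclusion $\subseteq$ follows from weak*-continuity of $\Phi_n$ together with the computation $\Phi_n(\Lambda(k)\Psi(x))=\delta_{n,k}x$ for $k\geq 0$. Dually, $(H^\infty)^*=\{y:\Phi_n(y)=0\ \forall n>0\}$, so an element of $H^\infty\cap(H^\infty)^*$ has all $\Phi_n$ vanishing for $n\neq 0$, and by Fejér summation equals $\Psi(\Phi_0(y))\in\Psi(\mathcal{M})\cong\mathcal{M}$. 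For (ii), the same Fourier characterization gives $H^\infty_0=\{y:\Phi_n(y)=0\ \forall n\leq 0\}$; since $\Lambda(1)$ is a unitary in $\mathcal{M}\rtimes_\beta\mathbb{Z}$ and left-multiplication by $\Lambda(1)$ shifts Fourier coefficients by one, the identity $H^\infty_0=\Lambda(1)H^\infty$ drops out immediately. The description of $H^\infty_0$ as generated by $\{\Lambda(n)\Psi(x):n\geq 1,x\in\mathcal{M}\}$ then follows from $\Lambda(1)\Lambda(n-1)\Psi(x)=\Lambda(n)\Psi(x)$ and another Fejér approximation.

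For (i), I would verify the four clauses of Definition \ref{def2.8}. Clause (1): since $\mathcal{D}=\mathcal{M}$ by (iii), $\tau|_{\mathcal{D}}=\tau$, which is semifinite on $\mathcal{M}$ by hypothesis. Clause (2) reduces by weak* density and separate weak* continuity of multiplication (applied to the bounded Fejér means) to the case of generators $\Lambda(n)\Psi(x)$ and $\Lambda(m)\Psi(y)$ with $n,m\geq 0$; using $\Psi(x)\Lambda(m)=\Lambda(m)\Psi(\beta^{-m}(x))$ the product becomes $\Lambda(n+m)\Psi(\beta^{-m}(x)y)$, and $\Phi$ picks out only the $n+m=0$ term, forcing $n=m=0$ and matching $\Phi(\Lambda(n)\Psi(x))\Phi(\Lambda(m)\Psi(y))$. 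Clause (3) is clear: the generators of $H^\infty$ and their adjoints together give all $\Lambda(n)\Psi(x)$ for $n\in\mathbb{Z}$, which by construction generate $\mathcal{M}\rtimes_\beta\mathbb{Z}$ in the weak* topology. Clause (4) is the defining property of the extended trace stated in the paragraph preceding Example \ref{example6.3}.

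The main technical obstacle is clause (2) of (i): one must be careful that the step ``reduce to generators by weak* density'' is legitimate, because multiplication in a von Neumann algebra is only separately (not jointly) weak* continuous, and $\Phi$ is only normal (weak* continuous on bounded sets). The standard remedy is to work with uniformly bounded Fejér means $P_N(y)$ and $P_N(z)$ of given $y,z\in H^\infty$: verify the multiplicativity on these finite sums by direct computation using the commutation relation, then pass to the limit using that $\Phi$ is weak*-continuous on norm-bounded sets and that $P_N(y)P_N(z)\to yz$ weak* along subsequences where boundedness is preserved. Once this Fejér-bounded argument is in place, the remaining verifications are routine.
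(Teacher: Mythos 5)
The paper offers no proof of this lemma at all --- it merely states that the result ``follows from our construction of crossed products'' and points to Section 3 of \cite{Arv} --- so there is nothing to compare step by step; your Fourier-coefficient/Fej\'er-mean argument is exactly the standard proof that this citation stands in for, and it is correct in all essentials (the characterizations $H^\infty=\{y:\Phi_n(y)=0,\ n<0\}$, $(H^\infty)^*=\{y:\Phi_n(y)=0,\ n>0\}$, the shift identity $H^\infty_0=\Lambda(1)H^\infty$, and the reduction of multiplicativity of $\Phi$ to the generator computation $\Lambda(n)\Psi(x)\Lambda(m)\Psi(y)=\Lambda(n+m)\Psi(\beta^{-m}(x)y)$ are all right). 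The one point to state more carefully is the limit step in clause (2) of (i): $P_N(y)P_N(z)\to yz$ should be justified via the uniform bound $\|P_N(y)\|\le\|y\|$ together with $\sigma$-strong convergence of the Fej\'er means (multiplication being jointly continuous on bounded sets in the $\sigma$-strong, not the weak*, topology), after which normality of $\Phi$ gives $\Phi(yz)=\lim_N\Phi(P_N(y))\Phi(P_N(z))=\Phi(y)\Phi(z)$ --- which is essentially the remedy you already sketch.
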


        We are able to characterize the invariant subspaces of a crossed product of a semifinite von Neumann algebra $\mathcal{M}$ by a trace-preserving action $\beta$.

        \begin{corollary}\label{corollary6.5}
            Suppose that $\mathcal{M}$ is a von Neumann algebra with a semifinite, faithful, normal tracial weight $\tau$.  Let $\alpha$ be a unitarily invariant, locally $\|\cdot\|_1$-dominating, mutually continuous norm with respect to $\tau$, and $\beta$ be a trace-preserving, *-automorphism of $\mathcal{M}$.  Consider the crossed product of $\mathcal{M}$ by an action $\beta$, $\mathcal{M}\rtimes_\beta \mathbb{Z}$. Still denote the semifinite, faithful, normal, extended tracial weight on $\mathcal{M}\rtimes_\beta \mathbb{Z}$ by $\tau$.

            Denote by $H^\infty$ the weak *-closed nonself-adjoint subalgebra in $\mathcal{M}\rtimes_\beta \mathbb{Z}$ which is generated by $\{\Lambda(n)\Psi(x) : x\in\mathcal{M}, n\geq 0\}$.  Then $H^\infty$ is a semifinite subdiagonal sublagebra of $\mathcal\rtimes_\beta \mathbb{Z}$.

            Let $\mathcal{K}$ be a closed subspace of $L^\alpha(\mathcal{M}\rtimes_\beta\mathbb{Z},\tau)$ such that $H^\infty \mathcal{K}\subseteq \mathcal{K}$.  Then there exist a projection $q$ in $\mathcal{M}$ and a family $\{u_\lambda\}_{\lambda\in\Lambda}$ of partial isometries in $\mathcal{M}\rtimes_\beta \mathbb{Z}$ which satisfy

            \begin{enumerate}
                \item[(i)] $u_\lambda q=0$ for all $\lambda\in\Lambda$;
                \item[(ii)] $u_\lambda u_{\lambda}^*\in\mathcal{M}$ and $u_\lambda u_{\mu}^*=0$ for all $\lambda,\mu\in\Lambda$ with $\lambda\neq \mu$;
                \item[(iii)] $\mathcal{K}=(L_\alpha(\mathcal{M}\rtimes_\beta \mathbb{Z})q)\otimes^{row}(\otimes^{row}_{\lambda\in\Lambda} H^\alpha u_\lambda)$.
            \end{enumerate}
        \end{corollary}

        \begin{proof}
            From Theorem \ref{theorem3.1}, we know that
                $$K=Y\oplus^{row}(\oplus^{row}_{\lambda\in\Lambda} H^\alpha u_\lambda)$$
            such that $Y$ is a closed subspace of $\mathcal{M}\rtimes_\beta \mathbb{Z}$ and a family of partial isometries, $\{u_\lambda\}$, in $\mathcal{M}\rtimes_\beta \mathbb{Z}$ which satisfy
            \begin{enumerate}
                \item[(a)] $u_\lambda Y^*=0$ for all $\lambda \in\Lambda$;
                \item[(b)] $u_\lambda u_{\lambda}^*\in\mathcal{M}$ and $u_\lambda u_{\mu}^* =0$ for all $\lambda, \mu \in \Lambda$ with $\lambda \neq \mu$;
                \item[(c)] $Y=[H^\infty_0 Y]_\alpha$.
            \end{enumerate}
            By Lemma \ref{lemma6.4} and (c), it is clear that
                $$Y =[H^\infty_0 Y]_\alpha=[\Lambda(1)H^\infty Y]_\alpha \subseteq \Lambda(1)Y.$$
            We can show, by induction, that $\Lambda(-n)Y\subseteq Y$ for any $n$ in $\mathbb{N}$.  From the defintion of $H^\infty$, we know that $\Lambda(n)Y\subset Y$ for every $n\geq 0$, and $\psi(x)Y\subseteq Y$ for every $x\in\mathcal{M}$.  Therefore, $Y\subseteq L^\alpha(\mathcal{M}\rtimes_\beta \mathbb{Z})$ is left $\mathcal{M}\rtimes_\beta \mathbb{Z}$-invariant, and from Corollary \ref{corollary5.5}, there exists a projection $q\in\mathcal{M}$ with $Y=L^\alpha(\mathcal{M}\rtimes_\beta \mathbb{Z}, \tau)q$.  Therefore,
            \begin{enumerate}
                \item[(i)] $u_\lambda q=0$ for all $\lambda\in\Lambda$;
                \item[(ii)] $u_\lambda u_{\lambda}^*\in\mathcal{M}$ and $u_\lambda u_{\mu}^*=0$ for all $\lambda,\mu\in\Lambda$ with $\lambda\neq \mu$;
                \item[(iii)] $\mathcal{K}=(L_\alpha(\mathcal{M}\rtimes_\beta \mathbb{Z})q)\otimes^{row}(\otimes^{row}_{\lambda\in\Lambda} H^\alpha u_\lambda)$
            \end{enumerate}
            hold, and the corollary is proven.
        \end{proof}

    \subsection{Invariant subspaces for $B(\mathcal{H})$}
        Let $\mathcal{H}$ be an infinite dimensional separable Hilbert space with orthonormal base $\{e_m\}_{m\in\mathbb{Z}}$.  We let $\tau=Tr$ be the usual trace on $B(\mathcal{H})$, namely
            $$\tau(x)=\sum_{m\in\mathcal{Z}}\langle xe_m,e_m\rangle\qquad \text{for every } x\in B(\mathcal{H}) \text{ with } x>0.$$
        With this $\tau$, $B(\mathcal{H})$ is a von Neumann algebra with a semifinite, faithful, normal tracial weight $\tau$.

        We let
            $$\mathcal{A}=\{x\in B(\mathcal{H}): \langle xe_m,e_n\rangle =0 \quad\forall n<m\}$$
        be the lower triangular subalgebra of $B(\mathcal{H})$.

        Recall from Example \ref{example6.3} that the crossed product of $l^\infty(\mathbb{Z})$ by an action $\beta$, denoted $l^\infty(\mathbb {Z})\rtimes _\beta \mathbb{Z}$, where the action $\beta$ is determined by
            $$\beta(f)(m)=f(m-1) \qquad \text{ for every } f\in l^\infty(\mathbb{Z}), m\in\mathbb{Z}$$
        is another way to realize $B(\mathcal{H})$.

        It is easy to see that $\mathcal{A}$ is $l^\infty(\mathbb{Z})\rtimes_\beta \mathbb{Z}_+$, a semifinite subdiagonal subalgebra of $l^\infty(\mathbb{Z}\rtimes_\beta\mathbb{Z}$ (see Lemma \ref{lemma6.4}).

        The following corollary follows from \ref{corollary6.5}.

        \begin{corollary}\label{corollary6.5.5} 
          Suppose $\mathcal{H}$ is a separable Hilbert space with an orthonormal base $\{e_m\}_{m\in\mathbb{Z}}$, and let
                $$H^\infty=\{x\in B(\mathcal{H}):\langle xe_m, e_n\rangle=0, \quad \forall n<m\}$$
            be the lower triangular subalgebra of $B(\mathcal{H})$. Then ${}\mathcal{D}=H^\infty \cap (H^\infty)^*$ is the diagonal subalgebra of $B(\mathcal{H})$. Suppose $\alpha:\mathcal{I}\rightarrow[0,\infty)$, where $\mathcal{I}$ is the set of elementary operators in $\mathcal{M}$, is an unitarily invariant norm such that any net $\{e_\lambda\}$ in $\mathcal{M}$ with $e_\lambda \uparrow I$ in the weak* topology implies that $\alpha((e_\lambda - I)x)\rightarrow 0$.

            Assume that $\mathcal{K}$ is a closed subspace of $H^\alpha$ such that $H^\infty\mathcal{K}\subseteq \mathcal{K}$. Then there exists a projection $q$ in $\mathcal{D}$ and $\{u_\lambda\}_{\lambda\in\Lambda}$, a family of partial isometries in $H^\infty$ which satisfy
              \begin{enumerate}
                \item[(i)] $u_\lambda q=0$ for every $\lambda\in\Lambda$;
                \item[(ii)] $u_\lambda u_\lambda^* \in \mathcal{D}$, and $u_\lambda u_\mu^*=0$ for every $\lambda,\mu\in\mathcal{D}$ with $\lambda\neq\mu$;
                \item[(iii)] $\mathcal{K}=(B(\mathcal{H})q)\oplus^{row}(\oplus^{row}_{\lambda\in\Lambda}H^\alpha u_\lambda)$. 
              \end{enumerate}
          \end{corollary}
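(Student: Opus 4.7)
The plan is to realize $B(\mathcal H)$ as an analytic crossed product and then invoke Corollary~\ref{corollary6.5}. By Example~\ref{example6.3}, $B(\mathcal H)\cong l^\infty(\mathbb Z)\rtimes_\beta\mathbb Z$, where $\beta$ is the (trace-preserving) shift $\beta(f)(m)=f(m-1)$, and the extended tracial weight on the crossed product coincides with the standard trace $\tau=\mathrm{Tr}$ on $B(\mathcal H)$. Under this identification the lower triangular subalgebra $H^\infty$ is exactly the analytic crossed product $l^\infty(\mathbb Z)\rtimes_\beta\mathbb Z_+$ generated by $\{\Lambda(n)\Psi(f):f\in l^\infty(\mathbb Z),\ n\geq 0\}$; by Lemma~\ref{lemma6.4} this is a semifinite subdiagonal subalgebra of $B(\mathcal H)$ with diagonal $H^\infty\cap(H^\infty)^*=l^\infty(\mathbb Z)=\mathcal D$.

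Since $B(\mathcal H)$ is a type $I_\infty$ factor, Proposition~\ref{prop3.5} applies: the assumption that $\alpha$ is unitarily invariant and satisfies $\alpha((e_\lambda-I)x)\to 0$ whenever $e_\lambda\uparrow I$ weak* is enough to conclude that $\alpha$ is a unitarily invariant, locally $\|\cdot\|_1$-dominating, mutually continuous norm with respect to $\tau$. Hence Corollary~\ref{corollary6.5} applies to the closed $H^\infty$-invariant subspace $\mathcal K\subseteq H^\alpha\subseteq L^\alpha(B(\mathcal H),\tau)$, producing a projection $q\in l^\infty(\mathbb Z)=\mathcal D$ and a family $\{u_\lambda\}_{\lambda\in\Lambda}$ of partial isometries in $B(\mathcal H)$ with $u_\lambda q=0$, $u_\lambda u_\lambda^*\in\mathcal D$, $u_\lambda u_\mu^*=0$ for $\lambda\neq\mu$, and
$$\mathcal K=\bigl(L^\alpha(B(\mathcal H),\tau)\,q\bigr)\oplus^{row}\Bigl(\oplus^{row}_{\lambda\in\Lambda}H^\alpha u_\lambda\Bigr).$$

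Two items remain: (a) promoting each $u_\lambda$ from $B(\mathcal H)$ to $H^\infty$, and (b) showing $q=0$ so the first summand matches $B(\mathcal H)q=\{0\}$. For (a), we have $u_\lambda=(u_\lambda u_\lambda^*)u_\lambda\in H^\alpha u_\lambda\subseteq\mathcal K\subseteq H^\alpha$; since $u_\lambda$ is a contraction, $u_\lambda\in H^\alpha\cap B(\mathcal H)$. By Lemma~\ref{lemma2.3}, a sequence in $H^\infty\cap L^\alpha$ approximating $u_\lambda$ in $\alpha$-norm is Cauchy in measure, and for $B(\mathcal H)$ with the counting trace, convergence in measure to a bounded limit coincides with operator-norm convergence (any $\tau(p^\perp)<1$ forces the projection $p^\perp$ to have rank $0$). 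Therefore $u_\lambda$ is the operator-norm limit of lower-triangular operators, whence $u_\lambda\in H^\infty$. For (b), if $q\neq 0$ there is some $m_0\in\mathbb Z$ with $e_{m_0,m_0}\le q$, and then for any $n<m_0$ the finite-rank matrix unit $e_{n,m_0}=e_{n,m_0}\,q$ lies in $L^\alpha(B(\mathcal H),\tau)\,q\subseteq\mathcal K\subseteq H^\alpha$; but $e_{n,m_0}$ is strictly upper triangular and, by the same operator-norm argument, cannot belong to $H^\alpha\cap B(\mathcal H)=H^\infty$. This contradiction forces $q=0$.

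The main obstacle is step (a): translating the abstract membership $u_\lambda\in H^\alpha$ into the concrete matrix-entry statement that $u_\lambda$ is lower triangular. The bridge is the continuous embedding $L^\alpha(\mathcal M,\tau)\hookrightarrow\widetilde{\mathcal M}$ built in Section~3, together with the special feature of the counting trace on $B(\mathcal H)$ that collapses convergence in measure of bounded operators to operator-norm convergence; this is precisely what lets us identify $H^\alpha\cap B(\mathcal H)$ with $H^\infty$, a rigidity feature that is specific to the $B(\mathcal H)$ setting and is not available in the general semifinite case.
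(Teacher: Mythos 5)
Your overall route is exactly the paper's: realize $B(\mathcal H)$ as $l^\infty(\mathbb Z)\rtimes_\beta\mathbb Z$, upgrade the hypothesis on $\alpha$ via Proposition \ref{prop3.5} (using that $B(\mathcal H)$ is a factor with the counting trace), and invoke Corollary \ref{corollary6.5}. The paper offers no further detail beyond this, so your steps (a) and (b) are added value. Step (b) is correct: a nonzero diagonal projection $q$ would put a strictly upper-triangular matrix unit $e_{n,m_0}$ into $L^\alpha(B(\mathcal H),\tau)q\subseteq\mathcal K\subseteq H^\alpha$, while the embedding of Lemma \ref{lemma2.3} --- which for the counting trace collapses convergence in measure of bounded operators to operator-norm convergence --- shows that every bounded element of $H^\alpha$ is lower triangular; so $q=0$ (which in fact already yields Corollary \ref{corollary6.6}).

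Step (a), however, contains a step that fails. You write $u_\lambda=(u_\lambda u_\lambda^*)u_\lambda\in H^\alpha u_\lambda$, which presupposes $u_\lambda u_\lambda^*\in H^\alpha$ and hence $u_\lambda u_\lambda^*\in L^\alpha(B(\mathcal H),\tau)$. But $u_\lambda u_\lambda^*$ is a projection in $\mathcal D$ that may have infinite trace, in which case neither it nor $u_\lambda$ need lie in $L^\alpha$ at all: take $\alpha=\|\cdot\|_1$ and $\mathcal K=H^1$, where $\Lambda$ is a singleton with $u=I\notin L^1(B(\mathcal H),\tau)$. For the same reason your identity $H^\alpha\cap B(\mathcal H)=H^\infty$ should only be the inclusion $H^\alpha\cap B(\mathcal H)\subseteq H^\infty$ (the reverse containment fails, again because $I\notin H^1$); fortunately only that inclusion is used in (b). The repair is routine: choose finite-trace diagonal projections $e\uparrow I$ in $\mathcal D$; then $e\in H^\infty\cap L^\alpha$, so $eu_\lambda\in H^\infty u_\lambda\cap L^\alpha\subseteq H^\alpha u_\lambda\subseteq\mathcal K\subseteq H^\alpha$, and $eu_\lambda$ is bounded, whence $eu_\lambda\in H^\infty$ by your rigidity argument; since $eu_\lambda\to u_\lambda$ weak* and $H^\infty$ is weak* closed, $u_\lambda\in H^\infty$. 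With that substitution the argument is complete.
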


        The following is a corollary of Theorem \ref{corollary6.5} and proposition \ref{prop3.5}.

        \begin{corollary}\label{corollary6.6}
            Suppose $\mathcal{H}$ is a separable Hilbert space with an orthonormal base $\{e_m\}_{m\in\mathbb{Z}}$, and let
                $$H^\infty=\{x\in B(\mathcal{H}):\langle xe_m, e_n\rangle=0, \quad \forall n<m\}$$
            be the lower triangular subalgebra of $B(\mathcal{H})$. Then $\mathcal{D}=H^\infty \cap (H^\infty)^*$ is the diagonal subalgebra of $B(\mathcal{H})$. Suppose $\alpha:\mathcal{I}\rightarrow[0,\infty)$, where $\mathcal{I}$ is the set of elementary operators in $\mathcal{M}$, is an unitarily invariant norm such that any net $\{e_\lambda\}$ in $\mathcal{M}$ with $e_\lambda \uparrow I$ in the weak* topology implies that $\alpha((e_\lambda - I)x)\rightarrow 0$.

            Assume that $\mathcal{K}$ is a closed subspace of $H^\alpha$ such that $H^\infty\mathcal{K}\subseteq \mathcal{K}$.  Then there exists $\{u_\lambda\}_{\lambda\in\Lambda}$, a family of partial isometries in $H^\infty$ which satisfy
            \begin{enumerate}
                \item[(i)] $u_\lambda u_\lambda^*\in\mathcal{D}$ and $u_\lambda u_\mu^*=0$ for every $\lambda, \mu \in \Lambda$ such that $\lambda\neq\mu$;
                \item[(ii)] $\mathcal{K}=\oplus^{row}_{\lambda\in\Lambda} H^\alpha u_\lambda$.
            \end{enumerate}
        \end{corollary}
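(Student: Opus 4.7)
The plan is to obtain Corollary \ref{corollary6.6} as a direct refinement of Corollary \ref{corollary6.5.5}: the preceding corollary already produces a Beurling-type decomposition, and the only extra work is to rule out the left $B(\mathcal{H})$-invariant summand. The hypotheses on $\alpha$ and on $\mathcal{K}\subseteq H^\alpha$ are identical to those of Corollary \ref{corollary6.5.5}, so I would begin by quoting that result to obtain a diagonal projection $q\in\mathcal{D}$ and a family $\{u_\lambda\}_{\lambda\in\Lambda}\subseteq H^\infty$ of partial isometries satisfying $u_\lambda q=0$, $u_\lambda u_\lambda^*\in\mathcal{D}$, $u_\lambda u_\mu^*=0$ for $\lambda\neq\mu$, together with
\[
\mathcal{K}=(B(\mathcal{H})q)\oplus^{row}\big(\oplus^{row}_{\lambda\in\Lambda} H^\alpha u_\lambda\big).
\]
Conclusions (i) and (ii) of Corollary \ref{corollary6.6} are then already in hand, so the problem reduces entirely to showing $q=0$.

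To prove $q=0$ I would argue by contradiction. Assume $q\neq 0$; since $q$ lies in the diagonal algebra $\mathcal{D}\cong\ell^\infty(\mathbb{Z})$, there is an integer $m_0$ with $qe_{m_0}=e_{m_0}$. Define the rank-one operator $v\in B(\mathcal{H})$ by $ve_{m_0}=e_{m_0-1}$ and $ve_m=0$ for $m\neq m_0$. Its unique nonzero matrix entry $\langle ve_{m_0}, e_{m_0-1}\rangle=1$ sits at position $(m_0-1, m_0)$, which is strictly above the diagonal, so $v\notin H^\infty$. On the other hand $vq=v$, hence $v\in B(\mathcal{H})q\subseteq\mathcal{K}\subseteq H^\alpha$; and being finite rank, $v\in\mathcal{M}\cap L^\alpha(\mathcal{M},\tau)$. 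Thus $v\in H^\alpha\cap\mathcal{M}$. Applying Lemma \ref{lemma3.3} to the weak*-closed, $H^\infty$-invariant subspace $H^\infty$ itself yields $H^\infty=\overline{H^\alpha\cap\mathcal{M}}^{w^*}$, so in particular $H^\alpha\cap\mathcal{M}\subseteq H^\infty$. This forces $v\in H^\infty$, a contradiction. Hence $q=0$, and the decomposition collapses to $\mathcal{K}=\oplus^{row}_{\lambda\in\Lambda} H^\alpha u_\lambda$, as required.

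The substantive content has been shouldered by Corollary \ref{corollary6.5.5} (and through it, by the crossed-product Beurling theorem Corollary \ref{corollary6.5}). The only ingredient specific to $B(\mathcal{H})$ is the above-diagonal matrix unit argument; its essential step is the identification $H^\alpha\cap\mathcal{M}\subseteq H^\infty$ supplied by Lemma \ref{lemma3.3}, and I expect this to be the only nontrivial obstacle.
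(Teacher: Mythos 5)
Your proposal is correct and follows the route the paper itself intends: the paper offers no explicit proof of Corollary \ref{corollary6.6}, merely citing Corollary \ref{corollary6.5} and Proposition \ref{prop3.5}, and your argument fleshes out exactly that derivation --- quoting Corollary \ref{corollary6.5.5} and then supplying, via the above-diagonal rank-one operator $v$ with $vq=v$, the one verification the paper omits, namely that $q=0$. The only point worth making explicit is that your appeal to Lemma \ref{lemma3.3} (to get $H^\alpha\cap\mathcal{M}\subseteq H^\infty$) requires $\alpha$ to be locally $\|\cdot\|_1$-dominating and mutually continuous, which holds under the stated weaker hypotheses precisely because $B(\mathcal{H})$ is a factor and Proposition \ref{prop3.5} applies.
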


        \begin{remark}
            The result is similar when $H^\infty$ is instead the upper triangular subalgebra of $B(\mathcal{H})$.
        \end{remark}

        \begin{remark}
            Recall that any unitarily invariant norm $\alpha$ gives rise to a symmetric gauge norm $\Psi$ on the spectrum of $|A|$, $\{a_n\}_{1\leq n\leq N}$, where $A$ is a finite rank operator.  Then Corollary \ref{corollary6.6} holds for $\Psi$.
        \end{remark}


\section*{Acknowledgements}
The research in this paper was from a thesis submitted to the Graduate School at the University of New Hampshire as part of the requirements for completion of a doctoral degree.


\begin{thebibliography}{99}                                                                                               %
\bibitem {Arv}W. B. Arveson, \emph{Analyticity in operator algebras}, Amer. J.
Math. 89 (1967) 578--642.

\bibitem {Be} T. Bekjan, \emph{ Noncommutative Hardy space associated with semi-finite subdiagonal algebras}, J. Math. Anal. Appl. 429 (2015), no. 2,
1347–1369.

\bibitem {BX}T. N. Bekjan and Q. Xu, \emph{Riesz and Szeg\"{o} type
factorizations for noncommutative Hardy spaces}, J. Operator Theory,
62 (2009) 215-231.

 \bibitem {Be1}T. N. Bekjan, \emph{Noncommutative symmetric Hardy spaces}, Integr.
 Equ. Oper. Theory 81 (2015) 191-212.


\bibitem {B}A. Beurling, \emph{On two problems concerning linear
transformations in Hilbert space}, Acta Math. 81 (1949) 239-255.


\bibitem {BL2}D. Blecher and L. E. Labuschagne, \emph{A Beurling theorem for
noncommutative $L^{p}$}, J. Operator Theory, 59 (2008) 29-51.



 \bibitem {Bo}S. Bochner, \emph{Generalized conjugate and analytic functions
 without expansions}, Proc. Nat. Acad. Sci. U.S.A. 45 (1959) 855-857.

 \bibitem {CHS} Y. Chen, D. Hadwin and J. Shen, \emph{A non-commutative Beurling's theorem with respect to unitarily invariant norms}, J. Operator Theory, 75 (2016) 497-523.

\bibitem {P2}P. Dodds and T. Dodds, \emph{Some properties of symmetric
operator spaces}, Proc. Centre Math. Appl. Austral. Nat. Univ., 29,
Austral. Nat. Univ., Canberra, 1992.

\bibitem {P}P. Dodds, T. Dodds and B. Pagter, \emph{Noncommutative Banach
function spaces}, Mathematische Zeitschrift, 201 (1989) 583-597.



\bibitem {Do}P. Dodds, T. Dodds and B. Pagter, \emph{Noncommutative K\"{o}the
duality}, Trans. Amer. Math. Soc. 339 (1993) 717-750.

\bibitem {Exel}R. Exel, \emph{Maximal subdiagonal algebras}, Amer. J. Math.
110 (1988) 775-782.

\bibitem {Fa}T. Fack and H. Kosaki, \emph{Generalized $s$-numbers of $\tau$-measurable
operators}, Pacific J. Math. 2 (1986) 269-300.



\bibitem {Fang}J. Fang, D. Hadwin, E. Nordgren and J. Shen, \emph{Tracial
gauge norms on finite von Neumann algebras satisfying the weak
Dixmier property}, J. Funct. Anal. 255 (2008) 142-183.

\bibitem{GK} I. C. Goldberg and M. G. Krein, {\em Introduction to the theory of linear nonselfadjoint operators}, Vol. 18, Translations of Mathematical Monographs, 1969.



\bibitem {Halmos}P. Halmos, \emph{Shifts on Hilbert spaces}, J. Reine Angew.
Math. 208 (1961) 102-112.

\bibitem {He}H. Helson, \emph{Lectures on Invariant Subspaces}, Academic
Press, New York-London, 1964.

\bibitem {HL}H. Helson and D. Lowdenslager, \emph{Prediction theory and
Fourier series in several variables}, Acta Math. 99 (1958) 165-202.

\bibitem {HR} E. Hewitt and K. A. Ross, {\em Abstract harmonic analysis}, Vol. 2, Springer-Verlag, Berlin, 1970

\bibitem {Ho}K. Hoffman, \emph{Analytic functions and logmodular Banach
algebras}, Acta Math. 108 (1962) 271-317.


\bibitem {KR} R. Kadison and J. Ringrose, \emph{Fundamentals of the thoery of operator algebras, volume II, advanced theory}, Academic Press, Inc, (1986).

\bibitem {Kunze}R. A. Kunze, \emph{$L^{p}$-Fourier transforms on locally
compact unimodular groups}, Trans. Amer. Math. Soc. 89 (1958)
519-540.

\bibitem{Ji} G. Ji, \emph{Maximality of semi-finite subdiagonal algebras}, J. Shaanxi Normal Univ. Sci. Ed. 28 (2000) 15-17.

\bibitem {JS}M. Junge and D. Sherman, \emph{Noncommutative $L^{p}$-modules}, J. Operator Theory 53 (2005) 3-34.

\bibitem {MG}M. Marsalli and G. West, \emph{Noncommutative $H^{p}$ spaces }, J. Operator Theory  40 (1998) 339-355.



\bibitem {McCarthy}C. A. McCarthy, \emph{$C_{p}$}, Israel J. Math. 5 (1967) 249-271.

\bibitem {MMS} M. McAsey, P. Muhly and K. Saito, \emph{Nonselfadjoint Crossed Products (Invariant Subspaces and Maximality)},  Transactions of the American Mathematical Society, Vol. 248, No. 2 (Mar., 1979), pp. 381
-409.

\bibitem{MvN} F. Murray and J. von Neumann, \emph{On rings of operators, IV}, Annals of Mathematics, Vol. 44, No. 2 second series, pp. 716-808. 


\bibitem  {NW} T. Nakazi and Y. Watatani, {\em Invariant subspace theorems for subdiagonal algebras}, J. Operator Theory 37(1997), 379-395.


\bibitem {Nelson}E. Nelson, \emph{Notes on noncommutative integration}, J.
Funct. Anal. 15 (1974) 103-116.




\bibitem {vNeumann}J. von Neumann, \emph{Some matrix-inequalities and
metrization of matric-space}, Tomsk Univ. Rev. 1 (1937) 286-300.

\bibitem {depag} B. de Pagter, \emph {Non-commutative Banach function spaces}, K. Boulabiar, G. Buskes, A. Triki (Eds.), Positivity, Trends in Mathematics, Birkhäuser Verlag, Basel (2007), pp. 197–227


\bibitem {PX}G. Pisier and Q. Xu, \emph{Noncommutative $L^{p}$-spaces},
Handbook of the geometry of Banach spaces, North-Holland, Amsterdam,
2 (2003) 1459-1517.
\bibitem {Sakai} Sakai, \emph{C$^*$-algebras and W$^*$-algebras},
Springer, 1971.

\bibitem {Sag} L. Sager, \emph{A Beurling-Blecher-Labuschagne theorem for noncommutative Hardy spaces associated with semifinite von Neumann algebras}, Integr. Equ. Op. Theory 86 (2016) 377-407.  doi: 10.1007/s00020-016-2308-z

\bibitem {Sai}K. S. Saito, \emph{A note on invariant subspaces for finite
maximal subdiagonal algebras}, Proc. Amer. Math. Soc. 77 (1979)
348-352.


\bibitem {Sai2}  K. S. Saito, \emph{A simple approach to the invariant subspace structure of analytic crossed products,}
 J. Operator Theory 27 (1992), no. 1, 169-177.


\bibitem {Schatten} R. Schatten, {\em A Theory of Cross-Spaces}, Annals of Mathematics Studies, no. 26, Princeton University Press, Princeton, New Jersey, 1950.

\bibitem {Schatten1} R. Schatten, {\em Norm ideals of completely continuous operators}, Springer-Verlag, Berlin, 1960.


\bibitem {Se}I. Segal, \emph{A noncommutative extension of abstract
integration}, Ann. Math. 57 (1952) 401-457.

\bibitem {Simon}B. Simon, \emph{Trace ideals and their applications}, London
Mathematical Society Lecture Note Series, vol. 35, Cambridge
University Press, Cambridge-New York, 1979.

\bibitem {Sr}T. P. Srinivasan, \emph{Simply invariant subspaces}, Bull. Amer.
Math. Soc. 69 (1963) 706-709.

\bibitem {SW}T. Srinivasan and J. K. Wang, \emph{Weak*-Dirichlet algebras},
Proceedings of the International Symposium on Function Algebras,
Tulane University, 1965 (Chicago), Scott-Foresman, 1966, 216--249.

\bibitem {Ta}M. Takesaki, \emph{Theory of Operator Algebras I}, Springer, 1979.

\bibitem{Xu} Q. Xu, \emph{On the maximality of subdiagonal algebras}, J. Operator Theory 54 (2005), no. 1, 137-146.

\bibitem{Xu2}  Q. Xu \emph{Operator spaces and noncommutative
L$^p$, The part on noncommutative L$^p$-spaces} Lectures in the
Summer School on Banach spaces and Operator spaces, Nankai
University China July 16 - July 20, 2007.

\bibitem {Y}F. Yeadon, \emph{Noncommutative $L^{p}$-spaces}, Math. Proc.
Cambridge Philos. Soc. 77 (1975) 91-102.
\end{thebibliography}
\end{document}